\definecolor{red}{rgb}{1,0,0}
\definecolor{green}{rgb}{0,1,0}
\definecolor{blue}{rgb}{0,0,1}
\definecolor{refkey}{gray}{.625}
\definecolor{labelkey}{gray}{.625}
\newcommand{\abs}[1]{\lvert#1\rvert}
\newcommand{\Ad}{\operatorname{Ad}}
\newcommand{\g}{\mathfrak{g}}
\newcommand{\h}{\mathfrak{h}}
 \def\title@font{\normalsize\bfseries}
 \let\ltx@maketitle\@maketitle
 \def\@maketitle{\bgroup%
 \let\ltx@title\@title%
 \def\@\title{\resizebox{\textwidth}{!}{%
  \mbox{\title@font\ltx@title}%
 }}%
 \ltx@maketitle%
 \egroup}
\theoremstyle{plain}
\newtheorem{thm}[equation]{Theorem}
\newtheorem{lem}[equation]{Lemma}
\newtheorem{Cor}[equation]{Corollary}
\newtheorem*{theorem*}{Theorem}
\theoremstyle{definition}
\newtheorem{defn}[equation]{Definition}
\newtheorem{example}[equation]{Example}
\newtheorem{prop}[equation]{Proposition}
\newtheorem{Rem}[equation]{Remark}
\numberwithin{equation}{section}
\begin{document}
\def\C{\mathbb{C}}
\def\CE{\mathrm{CE}}
\def\LP{\mathrm{LP}}
\def\E{\mathscr{E}}
\def\k{\mathbb{K}}
\def\Im{\text{Im}}
\def\End{\operatorname{End}}
\def\pr{\operatorname{pr}}
\def\Lie{\operatorname{Lie}}
\def\id{\operatorname{id}}
\def\Der{\operatorname{Der}}
\def\Hom{\operatorname{Hom}}
\def\Rep{\operatorname{Rep}}
\def\Map{\operatorname{Map}}
\def\Mod{\operatorname{Mod}}
\def\sgn{\operatorname{sgn}}

\title{Cohomology of hemistrict Lie 2-algebras}

\author{Xiongwei Cai}
\address{Chern Institute of Mathematics, Nankai University, Tianjin}
\email{\href{mailto:shernvey@nankai.edu.cn}{shernvey@nankai.edu.cn}}

\author{Zhangju Liu}
\address{Department of Mathematics, Peking University, Beijing}
\email{\href{mailto:liuzj@pku.edu.cn}{liuzj@pku.edu.cn}}

\author{Maosong Xiang}
\address{Center for Mathematical Sciences, Huazhong University of Science and Technology, Wuhan}
\email{\href{mailto: msxiang@hust.edu.cn}{msxiang@hust.edu.cn}}
\thanks{Research partially supported by NSFC grants 11425104, 11901221 and 11931009.}

\begin{abstract}
  We study representations of hemistrict Lie 2-algebras and give a functorial construction of their cohomology. We prove that both the cohomology of an injective hemistrict Lie 2-algebra $L$ and the cohomology of the semistrict Lie 2-algebra obtained from skew-symmetrization of $L$ are isomorphic to the Chevalley-Eilenberg cohomology of the induced Lie algebra $L_{\Lie}$. \\
  \emph{Key words:} Cohomology, Leibniz algebras, Lie 2-algebras.  \\
  \emph{ 2010 Mathematics Subject Classification:} 17B56, 17A32, 17B70.
\end{abstract}

\maketitle

\tableofcontents

\section{Introduction}
The notion of weak Lie 2-algebras was introduced by Roytenberg in~\cite{Roy1} to complete the picture of categorification of Lie algebras started by Baez and Crans~\cite{BC}. Roughly speaking, a weak Lie 2-algebra is a bilinear bracket on a linear category such that both skew-symmetry and Jacobi identity hold only up to natural transformations, called alternator and Jacobiator, respectively.  It was further shown in \emph{op.cit.} that the 2-category of weak Lie 2-algebras is equivalent to the 2-category of 2-term weak $L_\infty$-algebras by passing to the normalized chain complex. By a weak $L_\infty$-algebra, we mean a Loday infinity algebra~\cite{AP} $(V,\{\pi_k\}_{k\geq1})$ whose structure maps $\pi_k$ are skew-symmetric up to homotopy. Note that the cohomology of the underlying Loday infinity algebra of a weak $L_\infty$-algebra defined in \emph{loc.cit.} by Ammar and Poncin cannot encode the additional information on the weak symmetry of the structure maps.
This is the first of a series of papers devoted to the study of a new cohomology theory of weak $L_\infty$-algebras and its relation with other known cohomology theories.

The purpose of this paper is to study cohomology of hemistrict Lie 2-algebras. Such an algebraic structure is specified by a bilinear bracket $[-,-]$ on a 2-term cochain complex (or a 2-vector space) $\vec{L} = L^{-1} \xrightarrow{d} L^0$, which is skew-symmetric up to a chain homotopy $h_2$, called alternator, and satisfies the Jacobi (or Leibniz) identity  (see Definition~\ref{Def of weak Lie 2-algebra}). Thus a hemistrict Lie 2-algebra is indeed a 2-term differential graded (dg for short) Leibniz algebra whose bracket is skew-symmetric up to homotopy.
As an immediate example, each Leibniz algebra $(\g, [-,-]_\g)$ gives rise to a hemistrict Lie 2-algebra
\[
  L_\g = (K[1] \hookrightarrow \g, [-,-]_\g, h_2),
\]
where $K$ is the Leibniz kernel, and $h_2$ is the composition of the degree shifting operator $[1]$ and the $K$-valued symmetric pairing $h$ on $\g$ defined by $h(x,y) = [x,y]_\g + [y,x]_\g$ for all $x,y \in \g$.

We first study representations of a hemistrict Lie 2-algebra $L = (\vec{L},[-,-],h_2)$ on a 2-term cochain complex $\vec{V}$ in Section~\ref{Sec: rep}. By forgetting the alternator $h_2$, it follows that each representation of $L$ gives rise to a representation of the dg Leibniz algebra $(\vec{L},[-,-])$. Furthermore, the space of representations of a hemistrict Lie 2-algebra $L$ on $\vec{V}$ is one-to-one correspondent to the space of semidirect products of hemistrict Lie 2-algebras of $L$ by $\vec{V}$ (see Proposition~\ref{prop: equiv de module}).

We then focus on a construction of cohomology of hemistrict Lie 2-algebras in Section~\ref{sec: coh}. Our approach originates from Roytenberg's construction of standard complexes for Courant-Dorfman algebras in~\cite{Roy2}: Recall that a Courant-Dorfman algebra is quintuple $(\mathcal{R},\mathcal{E},\langle-,-\rangle,\partial, [-,-])$, where $\mathcal{R}$ is a commutative algebra, $(\mathcal{E},\langle-,-\rangle)$ is a metric $\mathcal{R}$-module,
$\partial$ is an $\mathcal{E}$-valued derivation of $\mathcal{R}$, and $[-,-]$ is a Dorfman bracket on $\E$. All the data subject to several compatible conditions generalizing those defining a Courant algebroid~\cite{LWX}.
Denote by $\Omega^1$ the K\"{a}hler differential of $\mathcal{R}$.
Roytenberg associates to each metric $\mathcal{R}$-module $(\mathcal{E},\langle-,-\rangle)$ a graded commutative subalgebra $C(\mathcal{E},\mathcal{R})$ of the convolution algebra $\Hom(U(\mathcal{L}),\mathcal{R})$, where $\mathcal{L} = \mathcal{E}[1] \oplus \Omega^1[2]$ is the graded Lie algebra whose bracket is given by the composition $d_{dR}\langle-,-\rangle$ of the $\mathcal{R}$-valued metric $\langle-,-\rangle$ and the universal de Rham differential $d_{dR}: \mathcal{R} \to \Omega^1$. Moreover, the derivation $\partial$ and the Dorfman bracket $[-,-]$ induces a natural differential $D$ on $C(\mathcal{E},\mathcal{R})$. The resulting complex was called in \emph{op.cit.} the standard complex of this Courant-Dorfman algebra.
Note that the prescribed differential $D$ is indeed defined on the whole convolution algebra $\Hom(U(\mathcal{L}),\mathcal{R})$. This larger cochain complex $(\Hom(U(\mathcal{L}),\mathcal{R}),D)$ includes the information of the homotopy term of the underlying hemistrict Lie 2-algebra $(\Omega^1[1] \to \mathcal{E},\{-,-\},d_{dR}\langle-,-\rangle)$ (see Example~\ref{Ex: CD-algebra}), thus to some extent, encodes a new cohomology in need.

We associate to each representation $V$ of a hemistrict Lie 2-algebra $L = (\vec{L},[-,-],h_2)$ a cochain complex $C^\bullet(L,V)$, also called the standard complex, where the pair $(\vec{L},h_2)$ plays a similar role as the metric $\mathcal{R}$-module in Roytenberg's construction.
The differential is the restriction of the Loday-Pirashvili differential $D$ of the dg Leibniz algebra $(\vec{L},[-,-])$ (see Lemma~\ref{Lemma: differential}). The cohomology of the representation $V$ is defined to be the cohomology of $C^\bullet(L,V)$ (see Definition~\ref{Def: coh}).
Applying this construction of standard complex to Leibniz algebras, it is shown in~\cite{CaiL} that the Leibniz bracket of a fat Leibniz algebra can be realized as a derived bracket.

Note that the construction of standard complexes depends on both a hemistrict Lie 2-algebra and a representation. It is natural to ask how it varies with respect to these two objects. First of all, when we fix a hemistrict Lie 2-algebra $L$, the construction of standard complexes is natural with respect to representations (See Proposition~\ref{prop: functoriality1}).
Meanwhile, the construction of standard complexes is also functorial with respect to the morphisms of hemistrict Lie 2-algebras (see Theorem~\ref{Thm: Functoriality}).

In Section~\ref{Sec: app}, we study the cohomology of hemistrict Lie 2-algebras of a particular type.
A hemistrict Lie 2-algebra $L = (\vec{L},[-,-],h_2)$ is said to be injective if the differential $d$ of the 2-term cochain complex $\vec{L}$ is injective, i.e., $H^\bullet(\vec{L}) = H^0(\vec{L}) = L^0/dL^{-1}$. The bracket $[-,-]$ induces a Lie algebra structure on $H^0(\vec{L})$. This Lie algebra is denoted by $L_{\Lie}$. Meanwhile, according to Roytenberg~\cite{Roy1}, there is a semistrict Lie 2-algebra $\tilde{L}$ obtained from skew-symmetrization.
We prove the following
\begin{theorem*}[see Theorem~\ref{main theorem}]
  Let $L$ be an injective hemistrict Lie 2-algebra with a representation $V$ such that $l_\alpha = 0$ for all $\alpha \in L^{-1}$. Then both the Lie algebra $L_{\Lie}$ and the semistrict Lie 2-algebra $\tilde{L}$ admit a natural representation on $V$. Moreover,
  \[
   H^\bullet(L,V) \cong H^\bullet(\tilde{L},V) \cong H^\bullet_{\CE}(L_{\Lie},V).
  \]
\end{theorem*}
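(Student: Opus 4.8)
The first assertion is the soft part. Since $l_\alpha=0$ for every $\alpha\in L^{-1}$, the representation axioms force $l_{d\alpha}=0$ as well, so all the left actions defining the hemistrict representation of $L$ on $\vec V$ descend along the projection $\pi\colon L^0\to L_{\Lie}=L^0/dL^{-1}$; one checks that the resulting operators, together with the differential of $\vec V$, make $\vec V$ into a dg module over the Lie algebra $L_{\Lie}$, and hence also over $L_{\Lie}$ viewed as a hemistrict Lie $2$-algebra with vanishing alternator. After skew-symmetrization the same data assemble into a representation of the semistrict Lie $2$-algebra $\tilde L$, since skew-symmetrization modifies only the bracket and produces a Jacobiator, leaving the underlying complexes and the action maps untouched. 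Moreover $\pi$, extended by $0$ on $L^{-1}$, is a (strict) morphism of hemistrict Lie $2$-algebras $L\to L_{\Lie}$: it is a chain map because $d$ is injective, it intertwines $[-,-]$ with the induced Lie bracket by the very definition of $L_{\Lie}$, and it kills the alternator $h_2$ since $h_2$ takes values in $L^{-1}$. Its skew-symmetrization is likewise a strict morphism $\tilde L\to L_{\Lie}$ of $2$-term $L_\infty$-algebras, and in both cases the given representation of $L$ (resp.\ of $\tilde L$) on $V$ is the pullback along $\pi$ of the $L_{\Lie}$-module structure just described.

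The strategy for the cohomological statement is to show that $\pi$ induces quasi-isomorphisms on standard complexes. By the functoriality of the construction (Proposition~\ref{prop: functoriality1} and Theorem~\ref{Thm: Functoriality}), the morphism $\pi$ yields a chain map
\[
  \pi^\ast\colon C^\bullet_{\CE}(L_{\Lie},V)\longrightarrow C^\bullet(L,V),
\]
where we use that the standard complex of the Lie algebra $L_{\Lie}$ (with zero alternator) is its Chevalley--Eilenberg complex; the analogous classical construction for $2$-term $L_\infty$-algebras produces a chain map $\tilde\pi^\ast\colon C^\bullet_{\CE}(L_{\Lie},V)\to C^\bullet(\tilde L,V)$. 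It then remains to prove that $\pi^\ast$ and $\tilde\pi^\ast$ are quasi-isomorphisms, after which the chain of isomorphisms $H^\bullet(L,V)\cong H^\bullet_{\CE}(L_{\Lie},V)\cong H^\bullet(\tilde L,V)$ follows.

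To treat $\pi^\ast$, exploit that the injectivity of $d$ exhibits $\vec L$ as a deformation retract of $L_{\Lie}$ concentrated in degree $0$: fixing a linear splitting $L^0=dL^{-1}\oplus\g$ with $\g\xrightarrow{\ \sim\ }L_{\Lie}$ provides, besides $\pi$, an inclusion $\iota\colon L_{\Lie}\hookrightarrow\vec L$ and a contracting homotopy $K\colon L^0\to L^{-1}$ with $dK=\id_{L^0}-\iota\pi$ and $Kd=\id_{L^{-1}}$. Now filter $C^\bullet(L,V)$ by the polynomial degree in the $\vec L$-arguments, as Roytenberg does for the standard complex of a Courant--Dorfman algebra; since $\vec L$ and $\vec V$ are both $2$-term, this filtration is bounded in each total degree and is preserved by $\pi^\ast$. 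On the associated graded the differential reduces to the internal one (coming only from $d$ and the differential of $\vec V$), and $\mathrm{gr}(\pi^\ast)$ becomes the map induced on the alternator-twisted tensor powers of $\vec L$, with coefficients in $\vec V$, by the homotopy equivalence $\pi\colon\vec L\xrightarrow{\ \sim\ }L_{\Lie}$. The alternator $h_2$ is precisely what cuts these twisted powers down to a complex with the homotopy type of the exterior powers $\Lambda^\bullet L_{\Lie}$, so $\mathrm{gr}(\pi^\ast)$ is a quasi-isomorphism, and hence so is $\pi^\ast$ by the comparison theorem for spectral sequences; identifying the associated $E_1$-term together with its induced differential recovers $C^\bullet_{\CE}(L_{\Lie},V)$, as it must. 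The same argument applies to $\tilde\pi^\ast$; alternatively, $\iota$ is an $L_\infty$-quasi-isomorphism $L_{\Lie}\to\tilde L$ onto a minimal model (the transferred higher brackets vanish because $H^{-1}(\vec L)=0$), the hypothesis $l_\alpha=0$ makes $V$ a compatible module on both sides, and an $L_\infty$-quasi-isomorphism of module--algebra pairs induces an isomorphism on Chevalley--Eilenberg cohomology.

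The technical heart --- and the expected main obstacle --- is the claim that the alternator-twisted powers of $\vec L$ with coefficients in $\vec V$ carry the homotopy type of $\Lambda^\bullet L_{\Lie}\otimes V$, i.e.\ that passing from the full Loday--Pirashvili complex of $(\vec L,[-,-])$ to the ``symmetric'' subcomplex $C^\bullet(L,V)$ selected by $h_2$ does not change this homotopy type, together with the verification that, modulo the polynomial filtration, the surviving part of the Loday--Pirashvili differential $D$ is exactly the Chevalley--Eilenberg differential of $L_{\Lie}$ acting on $V$. This bookkeeping is where the hemistrict Lie $2$-algebra axioms, the interplay of $[-,-]$, $d$ and $h_2$, and the hypothesis $l_\alpha=0$ all enter; on the semistrict side the analogous role is played by the Jacobiator of $\tilde L$, but there one can shorten the argument by invoking the classical homotopy theory of $L_\infty$-algebras.
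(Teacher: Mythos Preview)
Your outline takes a genuinely different route from the paper, and as written it remains a sketch rather than a proof.

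\textbf{Comparison with the paper.} For $H^\bullet(L,V)\cong H^\bullet_{\CE}(L_{\Lie},V)$ the paper does not use a filtration/spectral sequence argument. Instead it constructs, from a splitting $j$, an explicit morphism of hemistrict Lie $2$-algebras $g=(g_1,g_2)\colon L_{\Lie}\to L$ with $f\circ g=\id$ and a $2$-morphism $\theta=\pr_{-1}\colon \id_L\Rightarrow g\circ f$, and then writes down by hand a chain homotopy $\Theta$ on $C^\bullet(L,V)$ implementing $\id-(g\circ f)^\ast$; functoriality (Proposition~\ref{prop: pullback}) then gives $f^\ast$ and $g^\ast$ as mutually inverse on cohomology. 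For $H^\bullet(\tilde L,V)\cong H^\bullet_{\CE}(L_{\Lie},V)$ the paper applies the homological perturbation lemma to an explicit contraction of the Chevalley--Eilenberg complex of $\tilde L$ onto that of $L_{\Lie}$, checking that the perturbed differential on the small side is exactly $d_{\CE}$ because $h\rho\psi=0$. Your spectral-sequence strategy is in principle a reasonable alternative, and your second option for $\tilde L$ (transfer along an $L_\infty$-quasi-isomorphism) is morally what the perturbation lemma is doing; but the paper's approach has the virtue of being entirely explicit and of isolating exactly where each hypothesis is used.

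\textbf{The gap.} You yourself flag the missing step: the assertion that the alternator-twisted tensor powers of $\vec L$ with coefficients in $\vec V$ have the homotopy type of $\Lambda^\bullet L_{\Lie}\otimes V$ is stated but not argued. This is not routine bookkeeping. The cochain space $C^n(L,V)=\Hom(U(L[1])_{-n},V)$ is cut out by the weak-symmetry relations~\eqref{Eq: weak symmetry}, and showing that after passing to the associated graded of your polynomial filtration the remaining differential contracts this onto $\Lambda^\bullet L_{\Lie}$ requires an explicit homotopy on $U(L[1])$ compatible with those relations; you have not supplied one, nor have you verified that the internal differential on the associated graded behaves as claimed with respect to the $k$-index. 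A related omission: the paper's construction of the chain homotopy $\Theta$ (Lemma~\ref{Lemma: Theta}) uses the extra hypothesis $h_2(d\alpha,d\beta)=0$ for all $\alpha,\beta\in L^{-1}$, which appears in the precise Theorem~\ref{main theorem} but not in the introductory statement you are proving. Your argument never invokes this condition, so either you are implicitly claiming something stronger than what the paper proves, or the place where this hypothesis would enter your filtration argument has been elided along with the rest of the ``technical heart''. Until that computation is carried out, the proposal is an outline, not a proof.
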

As an application, let $\g$ be a Leibniz algebra and $L_\g$ the associated hemistrict Lie 2-algebra. According to Roytenberg~\cite{Roy1}, and independently Sheng and Liu~\cite{SL}, the skew-symmetrization of the Leibniz bracket $[-,-]_\g$ induces a semistrict Lie 2-algebra
\[
 \mathcal{G} = (K[1] \hookrightarrow \g, \widetilde{l}_2,\widetilde{l}_3).
\]
As a consequence, we have
\begin{theorem*}[see Theorem~\ref{Thm: Leibniz algebra}]
  Let $\g$ be a Leibniz algebra with Leibniz kernel $K$ and $V$ a representation of the associated hemistrict Lie 2-algebra $L_\g$ to $\g$ such that $l_\alpha = 0$ for all $\alpha \in K$. Then
  \[
    H^\bullet(L_\g,V) \cong H^\bullet(\mathcal{G},V) \cong H^\bullet_{\CE}(\g_{\Lie},V).
  \]
\end{theorem*}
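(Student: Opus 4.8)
The plan is to obtain this statement as a specialization of Theorem~\ref{main theorem} to the hemistrict Lie 2-algebra $L_\g = (K[1] \hookrightarrow \g, [-,-]_\g, h_2)$ associated to $\g$, together with the given representation $V$. First I would check that $L_\g$ is injective: the differential of the underlying 2-term cochain complex $\vec{L}_\g$ is the inclusion $K \hookrightarrow \g$ of the Leibniz kernel, which is injective by construction. Hence $H^\bullet(\vec{L}_\g) = H^0(\vec{L}_\g) = \g/K$, so the induced Lie algebra $(L_\g)_{\Lie}$ is exactly the quotient Lie algebra $\g_{\Lie} = \g/K$ with bracket induced by $[-,-]_\g$ --- the familiar fact that $\g/K$ is the maximal Lie-algebra quotient of a Leibniz algebra. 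Since $L^{-1} = K$ here, the hypothesis ``$l_\alpha = 0$ for all $\alpha \in L^{-1}$'' of Theorem~\ref{main theorem} becomes precisely the hypothesis ``$l_\alpha = 0$ for all $\alpha \in K$'' of the present statement.

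Next I would identify the semistrict Lie 2-algebra $\widetilde{L_\g}$ obtained by skew-symmetrizing $L_\g$ with the semistrict Lie 2-algebra $\mathcal{G} = (K[1] \hookrightarrow \g, \widetilde{l}_2, \widetilde{l}_3)$ of Roytenberg~\cite{Roy1} and Sheng-Liu~\cite{SL}. Unwinding the skew-symmetrization recipe, $\widetilde{l}_2$ is the antisymmetric part of $[-,-]_\g$ while $\widetilde{l}_3$ is assembled from the alternator $h_2$ of $L_\g$, hence from the symmetric pairing $h(x,y) = [x,y]_\g + [y,x]_\g$ valued in $K$; matching these against the explicit formulas of~\cite{Roy1,SL} should show the two structures coincide. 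Granting this, Theorem~\ref{main theorem} produces natural representations of $\g_{\Lie}$ and of $\mathcal{G}$ on $V$ and supplies the chain
\[
 H^\bullet(L_\g,V) \cong H^\bullet(\widetilde{L_\g},V) \cong H^\bullet_{\CE}((L_\g)_{\Lie},V),
\]
which, under the identifications $\widetilde{L_\g} = \mathcal{G}$ and $(L_\g)_{\Lie} = \g_{\Lie}$, is exactly the asserted chain of isomorphisms.

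The one point requiring care --- hence the step I expect to be the main obstacle --- is the bookkeeping in the identification of $\widetilde{L_\g}$ with $\mathcal{G}$: I would need to verify that skew-symmetrizing $L_\g$ reproduces the structure maps $\widetilde{l}_2, \widetilde{l}_3$ of $\mathcal{G}$ \emph{as normalized} in~\cite{Roy1,SL}, not merely an isomorphic semistrict Lie 2-algebra, since sign and normalization conventions for the Jacobiator vary across sources; and, correspondingly, that the $\mathcal{G}$-module structure on $V$ furnished by Theorem~\ref{main theorem} agrees with the natural one induced from the $L_\g$-module structure on $V$. Once the conventions are aligned, the remainder is a direct substitution into Theorem~\ref{main theorem}.
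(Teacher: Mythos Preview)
Your approach is essentially identical to the paper's: both derive the result by applying Theorem~\ref{main theorem} to $L_\g$, after recording that $L_\g$ is injective, that $(L_\g)_{\Lie} = \g_{\Lie}$, and that the skew-symmetrization $\widetilde{L_\g}$ is the semistrict Lie 2-algebra $\mathcal{G}$ of~\cite{Roy1,SL}. The paper in fact writes out the explicit formulas for $\widetilde{l}_2$ and $\widetilde{l}_3$ in this case just before stating the theorem, so the normalization check you flag as the main obstacle is already carried out in the text.

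One small gap: you only verify the blanket hypothesis $l_\alpha = 0$ for $\alpha \in L^{-1}$, but Part~(1) of Theorem~\ref{main theorem} carries the \emph{additional} assumption that $h_2(d\alpha,d\beta) = 0$ for all $\alpha,\beta \in L^{-1}$, which is needed for the isomorphism $H^\bullet(L,V) \cong H^\bullet_{\CE}(L_{\Lie},V)$. For $L_\g$ this amounts to $h(k_1,k_2) = [k_1,k_2]_\g + [k_2,k_1]_\g = 0$ for $k_1,k_2 \in K$, which holds because the Leibniz kernel lies in the left center of $\g$. The paper handles this with the single sentence ``It is clear that $L_\g$ satisfies the assumptions in Theorem~\ref{main theorem},'' but you should record the check explicitly.
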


\paragraph{\bf The sequel(s)}
We plan to write two sequels to this paper, in which we address several issues not covered here: In the first one in preparation, we study cohomology of weak Lie 2-algebras, which encodes cohomology of hemistrict Lie 2-algebras and semistrict Lie 2-algebras into a unified framework. Our first goal is to establish the compatibility between the functor of taking cohomology and the skew-symmetrization functor~\cite{Roy1} from the category of weak Lie 2-algebras to the category of semistrict Lie 2-algebras.
Meanwhile, in their work~\cite{CSX} of weak Lie 2-bialgebras, Chen, Sti\'{e}non and Xu developed an odd version of big derived bracket approach to semistrict Lie 2-algebras. Our second goal is to establish a derived bracket formalism for cohomology of weak Lie 2-algebras.

In the second one, we consider weak $L_\infty$-algebroids which encodes a Courant algebroid as a 2-term weak $L_\infty$-algebroid.
According to Kontsevich and Soibelman~\cite{KS}, $L_\infty$-algebras correspond to formal pointed dg manifolds, while $A_\infty$-algebras correspond to noncommutative formal pointed dg manifolds. Our purposes are to reinterpret weak $L_\infty$-algebroids as certain commutative up to homotopy formal dg manifolds and to investigate their relation with shifted derived Poisson manifolds studied in~\cite{BCSX} and strongly homotopy Leibniz algebra over a commutative dg algebra studied in~\cite{CLX}.

\paragraph{\bf Acknowledgement}
We would like to express our gratitude to several institutions for their hospitality while we were working on this project: Chern Institute of Mathematics (Xiang), Henan Normal University (Cai and Xiang), Peking University (Cai and Xiang), Tsinghua University (Cai and Xiang).
We would also like to thank Chengming Bai, Zhuo Chen, Yunhe Sheng, Rong Tang, and Tao Zhang for helpful discussions and comments.
Special thanks go to Zhuo Chen for constructive suggestions on this manuscript.
We are grateful to the anonymous referee for carefully reading this paper and providing us valuable suggestions.

\section{Representations of hemistrict Lie 2-algebras}\label{Sec: rep}
In this section, we study representations of hemistrict Lie 2-algebras on 2-term cochain complexes as Beck modules of the category of hemistrict Lie 2-algebras.

\subsection{Dg Leibniz algebras and hemistrict Lie 2-algebras}
As the first step, we collect some basic facts of dg Leibniz algebras, which are Leibniz algebras (or Loday algebras) introduced by Loday~\cite{Loday} in the category of cochain complexes (see~\cite{DMS} for more structure theorems on Leibniz algebras). Denote by $\k$ the base field which is either $\mathbb{R}$ or $\mathbb{C}$.
\begin{defn}
A dg (left) Leibniz algebra $\g$ over $\k$ is a (left) Leibniz algebra in the category of cochain complexes of $\k$-vector spaces, i.e., a cochain complex $\vec{\g} = (\g^\bullet,d)$ together with a cochain map
$[-,-]_\g: \vec{\g} \otimes \vec{\g} \rightarrow \vec{\g}$,
called Leibniz bracket, satisfying the (left) Leibniz rule:
\[
 [x,[y,z]_\g]_\g = [[x,y]_\g,z]_\g + (-1)^{\abs{x}\abs{y}} [y,[x,z]_\g]_\g, \;\;\forall x,y,z \in \g^\bullet.
\]
\end{defn}

\begin{example}\label{Eg:Leibniz algebra}
Let $\h = (\h^\bullet,d,[-,-]_\h)$ be a dg Lie algebra equipped with a representation $\rho$ on a cochain complex $V=(V^\bullet,d)$. Then the
direct sum cochain complex $\h \oplus V = (\h^\bullet\oplus V^\bullet,d)$, equipped with the bilinear map $\{-,-\}$
defined by
\[
\{A+v, B+w\} \triangleq [A,B]_\h + \rho(A)w,\quad \forall A,B \in \h^\bullet, v,w \in V^\bullet,
\]
is a dg Leibniz algebra.
In particular, the semi-direct product $gl(V)\ltimes V$ for any vector space $V$, is a Leibniz algebra, which is called an omni-Lie algebra~\cite{Weinstein}, and is usually denoted by $ol(V)$. Furthermore, the section space of an omni-Lie algebroid~\cite{CL} is also a Leibniz algebra.
\end{example}

\begin{Rem}
  The notion of 2-term dg Leibniz algebras was investigated by Sheng and Liu in~\cite{SL0} as special Leibniz 2-algebras. It was shown in \emph{loc.cit.} that there is a one-to-one correspondence between 2-term dg Leibniz algebras and crossed modules of Leibniz algebras.
\end{Rem}

\begin{defn}
A left representation $V$ of a dg Leibniz algebra $\g$ (or a left $\g$-module) is a cochain complex $\vec{V} = (V^\bullet,d)$ equipped with a cochain map $l: \vec{\g} \rightarrow \End(\vec{V})$, called left action, such that
\[
l_{[x,y]_\g} = [l_{x},l_{y}] \triangleq l_x \circ l_y -(-1)^{\abs{x}\abs{y}}l_y \circ l_x,
\]
for all $x,y \in \g^\bullet$.

A representation of $\g$ (or a $\g$-module) is a left representation $V = (\vec{V},l)$, together with another cochain map $r: \vec{\g} \rightarrow \End(\vec{V})$, called the right action, such that the following conditions hold:
\begin{align*}
 r_{[x,y]_\g} &= [l_{x},r_{y}] = l_x \circ r_y - (-1)^{\abs{x}\abs{y}} r_y \circ l_x, & r_{x} \circ l_{y} &= -r_{x} \circ r_{y}.
\end{align*}
\end{defn}

Given a left $\g$-module $V = (\vec{V},l)$, there are two standard ways to extend $V$ to a $\g$-module: one is the symmetric $\g$-module with the right action being $r=-l$; the other is the antisymmetric $\g$-module with zero right action.

\begin{example}
  Let $\g = (\g^\bullet,d,[-,-]_\g)$ be a dg Leibniz algebra. Then
  \begin{align*}
    l_xy &= [x,y]_\g, & r_xy &= (-1)^{\abs{x}\abs{y}}[y,x]_\g,
  \end{align*}
  for all $x,y \in \g^\bullet$, gives rise to the adjoint representation of $\g$ on $(\g^\bullet,d)$.
\end{example}

\begin{example}
Let $\g$ be a dg Leibniz algebra. Both the Leibniz kernel
\[
 K = K(\g) \triangleq \operatorname{span}\{[x,y]_\g + (-1)^{\abs{x}\abs{y}}[y,x]_\g \mid x,y \in \g\}
\]
and the (left) center
\[
Z = Z(\g) \triangleq \{x \in \mathfrak{g} \mid [x,y]_\g = 0,\; \forall y \in \g\}
\]
are dg ideals of $\mathfrak{g}$. The Leibniz bracket induces a left representation $l$ of $\mathfrak{g}$ on $K$ (or $Z$):
\[
l_x(y) \triangleq [x,y]_\g \qquad\forall x \in \g, y \in K (\text{or}\;Z).
\]
Moreover, the Leibniz bracket reduces to a Lie bracket on both $\g/K$ and $\g/Z$.
\end{example}

In~\cite{AP}, Ammar and Poncin defined a cohomology theory for Loday infinity algebras, which generalizes the Loday-Pirashvili cohomology of Leibniz algebras~\cite{LP}. In particular, we have the following
\begin{defn}
Let $\mathfrak{g} = (\g^\bullet,d,[-,-]_\g)$ be a dg Leibniz algebra and $V = ((V^\bullet,d),l,r)$ a $\g$-module. The Loday-Pirashvili (bi)complex $C^\bullet(\g,V)$ of the $\g$-module $V$ consists of the following data:
\begin{itemize}
\item the underlying graded vector space
\[
 \oplus_nC^n(\g,V) = \oplus_{p+q=n}\Hom((\g^\bullet)^{\otimes p},V^\bullet)^q,
\]
where $\Hom((\g^\bullet)^{\otimes p},V^\bullet)^q \subset \Hom((\g^\bullet)^{\otimes p},V^\bullet)$ consists of elements of degree $q$;
\item the differential $D = \delta + d_{\LP}^V: C^n(\g,V) \to C^{n+1}(\g,V)$, where
\begin{compactenum}
 \item $\delta: \Hom((\g^\bullet)^{\otimes p},V^\bullet)^q \to \Hom((\g^\bullet)^{\otimes p},V^\bullet)^{q+1}$ is the internal differential that is specified by the following equation
     \[
       \delta(\eta)(x_1,\cdots,x_p) = (-1)^{\ast_{(i-1)}} \sum_{i=1}^p\eta(x_1,\cdots,dx_i,\cdots,x_p) - (-1)^{n}d\eta(x_1,\cdots,x_p),
     \]
     for all $\eta \in \Hom((\g^\bullet)^{\otimes p},V^\bullet)^q$ and $x_1,\cdots,x_p \in \g^\bullet$, where $\ast_0 = 0$ and $\ast_i = \abs{x_1}+\cdots+ \abs{x_i}$ for all $1\leq i\leq p$.
 \item $d^V_{\LP}: \Hom((\g^\bullet)^{\otimes p},V^\bullet)^q \to \Hom((\g^\bullet)^{\otimes (p+1)},V^\bullet)^q$, called the Loday-Pirashvili differential, is defined by
  \begin{align}\label{eq: Leibniz coboundary}
  &\quad(d^V_{\LP}\eta)(x_{1},\cdots,x_{p+1}) \notag \\
  &=\sum_{i=1}^{p}(-1)^{\abs{x_i}(\ast_{i-1}+n)+i-1}l_{x_{i}}\eta(x_{1},\cdots, \widehat{x_i}, \cdots,x_{p+1})+(-1)^{\abs{x_{p+1}}(n+\ast_p)+p+1} r_{x_{p+1}}\eta(x_{1}, \cdots, x_{p}) \notag \\
  &\quad + \sum_{1\leq i<j\leq p+1}(-1)^{i+\abs{x_i}(\abs{x_{i+1}}+\cdots+ \abs{x_{j-1}})} \eta(x_{1},\cdots, \widehat{x_i}, \cdots,\widehat{x_j},[x_{i},x_{j}]_\g, \cdots, x_{p+1}),
  \end{align}
for all $\eta \in \Hom((\g^\bullet)^{\otimes p},V^\bullet)^q$ and all $x_1,\cdots,x_{p+1} \in \g^\bullet$.
\end{compactenum}
\end{itemize}
The resulting cohomology, denoted by $HL^{\bullet}(\g,V)$, is called the Loday-Pirashvili cohomology of the $\mathfrak{g}$-module $V$.
\end{defn}

Next, we recall the 2-category of hemistrict Lie 2-algebras, which is, in fact, a 2-subcategory of 2-term weak $L_\infty$-algebras~\cite{Roy1}:
\begin{defn}\label{Def of weak Lie 2-algebra}
A hemistrict Lie 2-algebra $L$ is a 2-term dg Leibniz algebra $(\vec{L} = (L^{-1} \xrightarrow{d} L^0),[-,-])$, equipped with a symmetric bilinear map $h_{2}: L^0 \odot L^0 \rightarrow L^{-1}$ of degree $(-1)$, called alternator, such that
\begin{equation}\label{Eq: anticomm up homotopy}
[x,y]+(-1)^{\abs{x}\abs{y}}[y,x]=d(h_{2})(x,y),
\end{equation}
for all $x,y \in L^\bullet$, and
\begin{align}\label{Eq1: hemi2}
  [h_{2}(x,y),z] &= 0, \\
  [x,h_{2}(y,z)] &= h_{2}([x,y],z) + h_{2}(y,[x,z]), \label{Eq2: hemi2}
\end{align}
for all $x,y,z \in L^0$. Here and in the sequel, $\odot$ means the symmetric tensor product over the base field $\k$.

A morphism $f$ of hemistrict Lie 2-algebras from $L^\prime = (\vec{L}^\prime,[-,-]^\prime,h_2^\prime)$ to $L = (\vec{L},[-,-],h_2)$ consists of a cochain map $f_1: \vec{L}^\prime \rightarrow \vec{L}$ and a chain homotopy $f_2: L^{\prime 0} \otimes L^{\prime 0} \to L^{-1}$ controlling the compatibility between $f_1$ and the brackets, i.e.,
\begin{equation}\label{Eq: f1uptof2}
 [f_1(x^\prime),f_1(y^\prime)] - f_1([x^\prime,y^\prime]^\prime) = d(f_2)(x^\prime,y^\prime),
\end{equation}
for all $x^\prime,y^\prime \in L^{\prime\bullet}$, such that the following compatible conditions hold:
\begin{align}
  h_2(f_1(x^\prime),f_1(y^\prime)) - f_1(h^\prime_2(x^\prime,y^\prime)) &= f_2(x^\prime,y^\prime) + f_2(y^\prime,x^\prime), \label{Eq: h2andf2}\\
  [f_1(x^\prime),f_2(y^\prime,z^\prime)] - [f_1(y^\prime),f_2(x^\prime,z^\prime)] - [f_2(x^\prime,y^\prime),f_1(z^\prime)]
  &=  f_2([x^\prime,y^\prime]^\prime,z^\prime) + f_2(y^\prime,[x^\prime,z^\prime]^\prime) - f_2(x^\prime,[y^\prime,z^\prime]^\prime), \label{Eq: Jacobiandf2}
\end{align}
for all $x^\prime,y^\prime,z^\prime \in L^{\prime 0}$. Assume that $f^\prime$ is a morphism of hemistrict Lie 2-algebras from $L^{\prime\prime}$ to $L^\prime$. The composition $f \circ f^\prime: L^{\prime\prime} \to L$ of $f$ and $f^\prime$ is defined by
\begin{align*}
  (f\circ f^\prime)_1 &= f_1 \circ f_1^\prime, & (f\circ f^\prime)_2(x^{\prime\prime},y^{\prime\prime}) &= f_2(f_1^\prime(x^{\prime\prime}), f_1^\prime(y^{\prime\prime})) + f_1(f_2^\prime(x^{\prime\prime},y^{\prime\prime})),
\end{align*}
for all $x^{\prime\prime}, y^{\prime\prime} \in L^{\prime\prime}$.

Let $f$ and $g$ be two morphisms of hemistrict Lie 2-algebras from $L^\prime = (\vec{L}^\prime,[-,-]^\prime,h_2^\prime)$ to $L = (\vec{L},[-,-],h_2)$. A 2-morphism $\theta: f \Rightarrow g$ is a chain homotopy $\theta: L^{\prime 0} \to L^{-1}$ from $f_1$ to $g_1$, i.e.,
\[
 f_1(x^\prime) - g_1(x^\prime) = d(\theta)(x^\prime) = d\theta(x^\prime) + \theta(dx^\prime), \;\;\;\forall x^\prime \in L^{\prime\bullet},
\]
satisfying the following condition
\[
 f_2(x^\prime,y^\prime) - g_2(x^\prime,y^\prime) = [g_1(x^\prime),\theta(y^\prime)] + [\theta(x^\prime),f_1(y^\prime)] - \theta([x^\prime,y^\prime]^\prime) = [f_1(x^\prime),\theta(y^\prime)] + [\theta(x^\prime),g_1(y^\prime)] - \theta([x^\prime,y^\prime]^\prime),
\]
for all $x^\prime, y^\prime \in L^{\prime 0}$.
\end{defn}

\begin{Rem}
  For simplicity, our definition of hemistrict Lie 2-algebra is different from that defined by Roytenberg in~\cite{Roy1}, where the symmetry assumption on the alternator $h_2$ is weaker. However, one can apply symmetrization on the alternator of a hemistrict Lie 2-algebra in the sense of Roytenberg to obtain a hemistrict Lie 2-algebra with symmetric alternator as in the above definition.
\end{Rem}

Let $L = (\vec{L},[-,-],h_2)$ be a hemistrict Lie 2-algebra.
By forgetting the alternator $h_2$, it follows immediately that $(\vec{L},[-,-])$ is a dg Leibniz algebra. Consequently,
\begin{compactenum}
  \item $(L^0,[-,-])$ is a Leibniz algebra and $L^{-1}$ is an $L^0$-module.
  \item $(H^0(\vec{L}),[-,-])$ is also a Leibniz algebra and $H^{-1}(\vec{L})$ is a symmetric $H^0(\vec{L})$-module.
\end{compactenum}
According to~\cite{Roy1}, there associates a natural hemistrict Lie 2-algebra from a Leibniz algebra.
\begin{example}\label{Ex:Leibniz algebra}
Let $(\g,[-,-]_\g)$ be a Leibniz algebra with Leibniz kernel $K$. Let $h\colon \g \otimes \g \rightarrow K$ be the symmetric $K$-valued pairing defined by $h(x,y) = [x,y]_\g + [y,x]_\g$, whose composition with the degree shifting operator $[1]$ gives rise to a degree $(-1)$ bilinear map
\[
 h_2\colon \g \otimes \g \xrightarrow{h} K \xrightarrow{[1]} K[1].
\]
Then $L_\g := (K[1] \hookrightarrow \g, [-,-]_\g, h_2)$ is a hemistrict Lie 2-algebra.
\end{example}

\subsection{Representations of hemistrict Lie 2-algebras}
Now we are ready to introduce the notion of representations of hemistrict Lie 2-algebras:
\begin{defn}\label{Def of rep}
Let $L=(\vec{L},[-,-],h_{2})$ be a hemistrict Lie 2-algebra. A representation of $L$ (or an $L$-module structure) on a 2-term cochain complex $\vec{V} = (V^{-1} \xrightarrow{d} V^{0})$ is specified by the following data:
\begin{itemize}
\item two cochain maps, called left and right actions, respectively:
  \begin{align}\label{Equ. action}
   l\colon & L^{i}\otimes V^{j}\rightarrow V^{i+j}, & r\colon &V^{j}\otimes L^{i}\rightarrow V^{j+i},
  \end{align}
  where $i,j=-1,0$ and $V^{-2} = 0$.
\item a degree $(-1)$ linear map $h_v:~L^{0} \odot V^{0} \rightarrow V^{-1}$, called the action homotopy.
\end{itemize}
The triple $(l,r,h_v)$ is required to satisfy the following requirements:
\begin{compactenum}
  \item $l$ gives rise to a left representation of the dg Leibniz algebra $(\vec{L},[-,-])$ on $V^\bullet$, i.e.,
      \begin{equation}\label{Eq: lisleftrep}
       l_{x_1}l_{x_2}v - (-1)^{\abs{x_1}\abs{x_2}}l_{x_2}l_{x_1}v = l_{[x_1,x_2]}v,
      \end{equation}
    for all $x_1,x_2 \in L^\bullet$ and $v \in V^\bullet$, where $l_xv = l(x,v)$.
  \item The actions $l,r$ skew-commute up to homotopy, i.e.,
      \begin{align}\label{Eq: comm uptohomotopy}
       l_xv + (-1)^{\abs{x}\abs{v}}r_xv &= d(h_v)(x,v),
      \end{align}
  for all $x \in L^\bullet, v \in V^\bullet$, where $r_xv = (-1)^{\abs{x}\abs{v}}r(v,x)$.
  \item The actions $l,r$, the action homotopy $h_v$, and the structure maps $[-,-],h_2$ of $L$, satisfy the following compatible equations:
      \begin{align}\label{Eq: compatible equs}
        r_{x_2}h_v(x_1,v) &= 0, \\
        l_{h_2(x_1,x_2)}v &= 0, \\
        l_{x_1}h_v(x_2,v) &= h_v([x_1,x_2],v) + h_v(x_2,l_{x_1}v),\label{Eq: landhl}
      \end{align}
      for all $x_1,x_2 \in L^0$ and $v \in V^0$.
\end{compactenum}
\end{defn}

\begin{Rem}
  Since the bracket $[-,-]$ and the actions $l,r$ are all cochain maps, it follows that all Equations~\eqref{Eq: compatible equs}-\eqref{Eq: landhl} still hold if we replace $h_v$ (resp. $h_2$)  by $d(h_v)$ (resp. $d(h_2)$).
\end{Rem}

\begin{example}
  Let $L=(\vec{L},[-,-],h_{2})$ be a hemistrict Lie 2-algebra. There is a natural representation of $L$ on its underlying 2-term cochain complex $\vec{L}$ with $l = r = [-,-]$ and $h_v = h_2$. This representation, called the adjoint representation of $L$, will be denoted by $\Ad_L$.
\end{example}

\begin{example}\label{Ex: AdLg}
  Let $\g$ be a Leibniz algebra and $L_\g$ the hemistrict Lie 2-algebra as in Example~\ref{Ex:Leibniz algebra}.
  Each ordinary representation $(V^0,l,r)$ of the Leibniz algebra $\g$ determines a representation $V$ of $L_\g$ on the 2-term cochain complex $\vec{V}:= (V^\prime[1] \hookrightarrow V^0)$, where $V^{\prime}:=\{l_x v + r_x v \mid x \in \g, v \in V^0\}$, the left and right actions are induced by $l,r$, and the action homotopy is given by $h_v(x,v) = [1](l_x v) + [1](r_x v)$.
  In particular, the adjoint representation of $\g$ on $\g$ gives rise to the adjoint representation $\Ad_{L_\g}$ of $L_\g$ on $K[1] \to \g$.
\end{example}

\begin{example}\label{Ex: rep on ovs}
  Let $L=(\vec{L},[-,-],h_{2})$ be a hemistrict Lie 2-algebra. A representation of $L$ on an ordinary (or ungraded) vector space $V$ is, by definition, specified by two linear maps
  \begin{align*}
    l:~& L^0 \otimes V \rightarrow V, & r:~& V \otimes L^0 \rightarrow V,
  \end{align*}
  such that $(V,l,r)$ is a symmetric representation of the Leibniz algebra $(L^0,[-,-])$ and that $dL^{-1} \subset L^{0}$ acts trivially on $V$. It thus follows that $(V,l,r)$ is a symmetric representation of the Leibniz algebra $(H^0(\vec{L}),[-,-])$.
\end{example}

Analogous to situations in Leibniz algebras, we have some special representations of hemistrict Lie 2-algebras:
\begin{defn}
  Let $L$ be a hemistrict Lie 2-algebra. A representation $V = (\vec{V},l,r,h_v)$ of $L$ is said to be symmetric if $h_v = 0$, and antisymmetric if $r = 0$.
\end{defn}
It follows immediately from Equation~\eqref{Eq: comm uptohomotopy} that any symmetric representation of $L$ is of the form $(\vec{V},l,r=-l)$ and any antisymmetric representation of $L$ on a 2-term cochain complex $\vec{V}$ is specified by an ``exact" left representation in the following sense:
\[
  l_xv = d(h_v)(x,v), \;\quad \forall x \in L^\bullet, v \in V^\bullet.
\]
\begin{example}
  Any representation of a hemistrict Lie 2-algebra $L$ on an ordinary vector space is symmetric.
\end{example}

Recall that Lie algebra modules are one-to-one correspondent to abelian extensions of the given Lie algebra. For hemistrict Lie 2-algebras, we have the following
\begin{prop}\label{prop: equiv de module}
 Let $L=(\vec{L},[-,-],h_{2})$ be a hemistrict Lie 2-algebra and $\vec{V}:= (V^{-1} \xrightarrow{d} V)$ a 2-term complex. Then the linear maps $(l,r,h_v)$ as in Definition~\ref{Def of rep} define a representation of $L$ on $\vec{V}$ if and only if the semidirect product
\[
L\oplus V \triangleq (\overrightarrow{L \oplus V}, \{-,-\} \triangleq [-,-]+l+r, H \triangleq h_{2}+h_v)
\]
is a hemistrict Lie 2-algebra.
\end{prop}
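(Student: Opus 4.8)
The plan is to prove the equivalence by a term-by-term comparison of the axioms of Definition~\ref{Def of weak Lie 2-algebra}, applied to $L\oplus V$, with those of Definition~\ref{Def of rep}. First, $H=h_2+h_v$ is visibly symmetric of degree $(-1)$ if and only if $h_2$ and $h_v$ are, and $\{-,-\}=[-,-]+l+r$ is a cochain map on $\overrightarrow{L\oplus V}=\vec{L}\oplus\vec{V}$ if and only if $[-,-]$ and $l,r$ are; both of these are built into the hypotheses on each side, so the content reduces to the Leibniz rule for $\{-,-\}$ together with Equations~\eqref{Eq: anticomm up homotopy}, \eqref{Eq1: hemi2} and~\eqref{Eq2: hemi2} for the pair $(\{-,-\},H)$. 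I would evaluate each of these on arguments of $\overrightarrow{L\oplus V}$ and split into cases according to how many arguments lie in $\vec{V}$, using throughout that $\{-,-\}$ carries no $\vec{V}\otimes\vec{V}$ component and that $H$ vanishes on $V^0\odot V^0$.

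The Leibniz rule for $\{-,-\}$ then breaks up as follows: with all three arguments in $\vec{L}$ it is the Leibniz rule of $(\vec{L},[-,-])$; with exactly one argument in $\vec{V}$ there are three subcases according to its slot, the one with the $\vec{V}$-argument last being precisely~\eqref{Eq: lisleftrep} and the other two amounting together to the dg module identities $r_{[x,y]}=[l_x,r_y]$ and $r_x\circ l_y=-r_x\circ r_y$; with two or more arguments in $\vec{V}$ both sides vanish. For the alternator identities: \eqref{Eq: anticomm up homotopy} for $(\{-,-\},H)$ yields, in the three cases, \eqref{Eq: anticomm up homotopy} for $L$, then Equation~\eqref{Eq: comm uptohomotopy}, then a trivial identity; \eqref{Eq1: hemi2} yields \eqref{Eq1: hemi2} for $L$, then $l_{h_2(x_1,x_2)}v=0$, then (by symmetry of $H$) $r_{x_2}h_v(x_1,v)=0$, then trivial identities; and \eqref{Eq2: hemi2} yields \eqref{Eq2: hemi2} for $L$, then~\eqref{Eq: landhl}, then the single extra identity $r(v,h_2(x,y))=h_v(y,r(v,x))+h_v(x,r(v,y))$ coming from the subcase with the $\vec{V}$-argument in the first slot, then trivial identities.

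The main obstacle is to check that the component identities produced above that are \emph{not} literally axioms of Definition~\ref{Def of rep} --- namely the two dg module identities for $(l,r)$ and the extra identity $r(v,h_2(x,y))=h_v(y,r(v,x))+h_v(x,r(v,y))$ --- do follow from~\eqref{Eq: lisleftrep}, \eqref{Eq: comm uptohomotopy} and Equations~\eqref{Eq: compatible equs}--\eqref{Eq: landhl}; this is the real content of the ``only if'' direction. The device I would use, besides the sign bookkeeping, is to evaluate~\eqref{Eq: comm uptohomotopy} on elements of $V^{-1}$ of the form $h_v(x_1,v)$ and of $L^{-1}$ of the form $h_2(x_1,x_2)$, on which the right action is annihilated by~\eqref{Eq: compatible equs} (resp.\ $l$ by the equation $l_{h_2(x_1,x_2)}v=0$); this produces, for example, the auxiliary identity $l_xh_v(a,b)=h_v(x,d(h_v(a,b)))$ for $x,a\in L^0$ and $b\in V^0$. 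Combining such auxiliary identities with the Remark after Definition~\ref{Def of rep} (which allows replacing $h_v$ by $d(h_v)$ in~\eqref{Eq: compatible equs}--\eqref{Eq: landhl}) and with~\eqref{Eq: lisleftrep}, I expect each of the three component identities to collapse to a consequence of the axioms.

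The converse implication requires no further argument: Equations~\eqref{Eq: lisleftrep}, \eqref{Eq: comm uptohomotopy} and~\eqref{Eq: compatible equs}--\eqref{Eq: landhl} all occur verbatim among the component identities listed above, so if $L\oplus V$ is a hemistrict Lie $2$-algebra then $(l,r,h_v)$ is a representation in the sense of Definition~\ref{Def of rep}. What is left is then only the careful tracking of Koszul signs throughout the decomposition, which I would handle by fixing once and for all the conventions $l_xv=l(x,v)$ and $r_xv=(-1)^{\abs{x}\abs{v}}r(v,x)$ of Definition~\ref{Def of rep}.
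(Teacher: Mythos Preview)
Your approach is correct and essentially coincides with the paper's. The only organizational difference is that the paper extracts the derivation of the two dg Leibniz module identities $r_{[x,y]}=[l_x,r_y]$ and $r_x\circ l_y=-r_x\circ r_y$ into a separate Lemma (stated and proved just before the Proposition), and then invokes that lemma to conclude that the semidirect product is a dg Leibniz algebra; you instead propose to derive these identities inline within the case analysis. Your treatment is in fact slightly more thorough: the paper's proof verifies Equation~\eqref{Eq: anticomm up homotopy} for $(\{-,-\},H)$ and then simply asserts that $L\oplus V$ is a hemistrict Lie $2$-algebra, whereas you explicitly identify and plan to check the extra compatibility $r(v,h_2(x,y))=h_v(y,r(v,x))+h_v(x,r(v,y))$ arising from the first-slot case of~\eqref{Eq2: hemi2}; your proposed device (applying~\eqref{Eq: comm uptohomotopy} to $h_v(x_1,v)$ and $h_2(x_1,x_2)$ and using~\eqref{Eq: compatible equs}--\eqref{Eq: landhl}) does indeed reduce this to the representation axioms.
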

To prove this proposition, we need the following result on the relationship between representations of a hemistrict Lie 2-algebra and representations of the associated dg Leibniz algebra.
\begin{lem}\label{Lemma V}
  Let $V = (\vec{V},l,r,h_v)$ be a representation of the hemistrict Lie 2-algebra $L = (\vec{L},[-,-],h_2)$. Then $(\vec{V},l,r)$ is a representation of the dg Leibniz algebra $(\vec{L},[-,-])$.
\end{lem}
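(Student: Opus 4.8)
The plan is to verify directly the three identities that define a representation of the dg Leibniz algebra $(\vec{L},[-,-])$ for the pair of cochain maps $(l,r)$, namely $l_{[x,y]}=[l_x,l_y]$, $r_{[x,y]}=[l_x,r_y]$ and $r_x\circ l_y=-r_x\circ r_y$ for all $x,y\in L^\bullet$. The first one is nothing but Equation~\eqref{Eq: lisleftrep} in Definition~\ref{Def of rep}, so only the last two require work, and for both the idea is the same: use the skew-commutativity up to homotopy~\eqref{Eq: comm uptohomotopy} to eliminate the right action, writing $r_xv=(-1)^{\abs{x}\abs{v}}\bigl(d(h_v)(x,v)-l_xv\bigr)$ for $x\in L^\bullet$, $v\in V^\bullet$.

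For $r_x\circ l_y=-r_x\circ r_y$, I would rewrite $l_yv=d(h_v)(y,v)-(-1)^{\abs{y}\abs{v}}r_yv$ and apply the cochain map $r_x$; the term $r_x\bigl(d(h_v)(y,v)\bigr)$ vanishes because of the first compatibility relation in~\eqref{Eq: compatible equs} together with its analogue obtained by replacing $h_v$ with $d(h_v)$ (the Remark following Definition~\ref{Def of rep}), leaving $r_x(l_yv)=-(-1)^{\abs{y}\abs{v}}r_x(r_yv)$. For $r_{[x,y]}=[l_x,r_y]$, I would apply~\eqref{Eq: comm uptohomotopy} to the left entry $[x,y]$, expand $l_{[x,y]}v$ by~\eqref{Eq: lisleftrep}, and on the other side expand $[l_x,r_y]v=l_x(r_yv)-(-1)^{\abs{x}\abs{y}}r_y(l_xv)$ and again replace each right action via~\eqref{Eq: comm uptohomotopy}; the terms built only from $l$ then cancel in pairs, and the terms carrying $d(h_v)$ match by the version of~\eqref{Eq: landhl} with $d(h_v)$ in place of $h_v$, i.e. $l_x\,d(h_v)(y,v)=d(h_v)([x,y],v)+d(h_v)(y,l_xv)$, valid in all degrees by the Remark and by $l,r,d$ being cochain maps.

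The only genuine obstacle is sign bookkeeping. Since $h_2$ and $h_v$ are supported in top degree while the three identities must hold for arbitrary $x,y\in L^\bullet$ and $v\in V^\bullet$, the Koszul factors $(-1)^{\abs{x}\abs{v}}$, $(-1)^{\abs{y}\abs{v}}$ produced by~\eqref{Eq: comm uptohomotopy} do not simplify to $1$ a priori. What saves the day is that in the two-term setting $L^{-2}=V^{-2}=0$, so each composite that carries a spurious sign (such as $r_x r_y v$, $l_x l_y v$, or $d(h_v)([x,y],v)$) is automatically zero in precisely the degree configurations where that sign equals $-1$; hence a short inspection over the finitely many cases $(\abs{x},\abs{y},\abs{v})\in\{0,-1\}^3$ finishes the proof. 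Conceptually, these last two identities are exactly the mixed $\vec{L}$--$\vec{V}$ components of the Leibniz rule for the semidirect bracket $\{-,-\}=[-,-]+l+r$, which is the role this lemma plays in the proof of Proposition~\ref{prop: equiv de module}.
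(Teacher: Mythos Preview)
Your proposal is correct and follows essentially the same route as the paper's proof: both reduce to checking $r_{[x,y]}=[l_x,r_y]$ and $r_x\circ l_y=-r_x\circ r_y$, and both do so by replacing each occurrence of $r$ via~\eqref{Eq: comm uptohomotopy}, invoking~\eqref{Eq: lisleftrep}, and then eliminating the residual $d(h_v)$-terms using the $d(h_v)$-versions of~\eqref{Eq: compatible equs} and~\eqref{Eq: landhl} from the Remark after Definition~\ref{Def of rep}. Your explicit remark that the stray Koszul signs are harmless because the relevant composites land in $V^{-2}=0$ whenever the sign is $-1$ is a useful clarification that the paper leaves implicit.
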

\begin{proof}
 Since $l$ gives rise to a left representation of $(\vec{L},[-,-])$ on $\vec{V}$, it suffices to check that
 \begin{align*}
   r_{[x,y]} &= [l_{x},r_{y}], & r_x \circ l_y &= -r_x \circ r_y, \qquad \forall x,y \in L^\bullet,
 \end{align*}
or
\begin{align*}
  r(v,[x,y]) &= l(x,r(v,y)) - (-1)^{\abs{x}\abs{y}}r(l(x,v),y), &
  r_xl_yv &= -(-1)^{\abs{v}\abs{y}}r_xr_yv,\;\forall v \in V^\bullet.
\end{align*}
For the first one, we compute for any $v \in V^{-1} \oplus V^0$,
\begin{align*}
   r(v,[x,y]) &= (-1)^{\abs{v}(\abs{x}+\abs{y})}r_{[x,y]}v \qquad\qquad\qquad\qquad \text{by Equation~\eqref{Eq: comm uptohomotopy}} \\
   &= -l_{[x,y]}v + d(h_v)([x,y],v) \qquad\qquad\qquad \text{by Equation~\eqref{Eq: lisleftrep}}\\
   &= -l_xl_yv + (-1)^{\abs{x}\abs{y}} l_yl_xv + d(h_v)([x,y],v) \quad \text{by Equation~\eqref{Eq: comm uptohomotopy}} \\
   &= (-1)^{\abs{y}\abs{v}}l_xr_yv - l_xd(h_v)(y,v) - (-1)^{\abs{v}\abs{y}}r_yl_xv \\
   &\quad +(-1)^{\abs{x}\abs{y}}d(h_v)(y,l_xv) + d(h_v)([x,y],v) \quad \text{by Equation~\eqref{Eq: landhl}}\\
   &= (-1)^{\abs{y}\abs{v}}l_xr_yv - (-1)^{\abs{x}\abs{y}+\abs{y}(\abs{x}+\abs{v})} r_yl_xv \\
   &= l(x,r(v,y)) - (-1)^{\abs{x}\abs{y}}r(l(x,v),y)
\end{align*}
For the second one, by Equation~\eqref{Eq: comm uptohomotopy}, we have
\begin{align*}
  r_x(r_y(v)) &= -(-1)^{\abs{y}\abs{v}}r_x(l_yv) + r_x(d(h_v)(y,v)) \qquad \text{by Equation~\eqref{Eq: compatible equs}}\\
   &= -(-1)^{\abs{y}\abs{v}}r_x(l_y(v)).
\end{align*}
\end{proof}

As an immediate consequence, we have
\begin{Cor}\label{Cor V}
  Let $V = (\vec{V},l,r,h_v)$ be a representation of a hemistrict Lie 2-algebra $L = (\vec{L},[-,-],h_2)$. Then both $(V^{-1},l,r)$ and $(V^0,l,r)$ are representations of the Leibniz subalgebra $(L^0,[-,-])$ of the dg Leibniz algebra $(\vec{L},[-,-])$.
\end{Cor}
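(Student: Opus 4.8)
The plan is to deduce this directly from Lemma \ref{Lemma V}, which we have just established. By that lemma, $(\vec V, l, r)$ is a representation of the dg Leibniz algebra $(\vec L, [-,-])$; explicitly, the cochain maps $l, r\colon \vec L \to \End(\vec V)$ satisfy $l_{[x,y]} = [l_x, l_y]$, $r_{[x,y]} = [l_x, r_y]$, and $r_x \circ l_y = -r_x \circ r_y$ for all $x,y \in L^\bullet$. The idea is simply to restrict all of this structure to the degree-zero piece $L^0 \subset L^\bullet$ and to each of the two degrees of $\vec V$.

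First I would observe that $(L^0, [-,-])$ is an (ordinary, ungraded) Leibniz algebra: this is recorded already after Definition \ref{Def of weak Lie 2-algebra}, and in any case it follows from the dg Leibniz rule restricted to degree-zero elements, where all the Koszul signs $(-1)^{\abs{x}\abs{y}}$ are $+1$. Next, for $x \in L^0$ and $v \in V^i$ ($i = -1, 0$), the left action $l_x v = l(x,v)$ lies in $V^{i}$ because $l$ has degree $0$; likewise $r_x v \in V^i$. Thus $L^0$ acts on each of $V^{-1}$ and $V^0$ separately by both $l$ and $r$. The three defining identities of a Leibniz-algebra representation — $l_{[x,y]} = [l_x,l_y] = l_x l_y - l_y l_x$, $r_{[x,y]} = l_x r_y - r_y l_x$, and $r_x l_y = -r_x r_y$ — are then obtained from the corresponding graded identities of Lemma \ref{Lemma V} by taking $x, y \in L^0$ and evaluating on $v \in V^{-1}$, respectively $v \in V^0$; since $\abs{x} = \abs{y} = 0$ in these cases, the graded commutators reduce to ordinary ones and the identities hold verbatim on each graded component.

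There is essentially no obstacle here: the only thing to be slightly careful about is that the right action in Definition \ref{Def of rep} is introduced via $r_x v = (-1)^{\abs{x}\abs{v}} r(v,x)$, but for $x \in L^0$ the sign is trivial, so $r_x v = r(v,x)$ and there is no bookkeeping to worry about. One could also remark that, being restrictions of the symmetric/antisymmetric data on $\vec V$, these representations inherit symmetry when $V$ is symmetric; but this is not needed for the statement, so I would keep the proof to the two or three lines above.
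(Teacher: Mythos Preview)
Your proposal is correct and matches the paper's approach exactly: the paper states Corollary~\ref{Cor V} without proof, prefacing it only with ``As an immediate consequence, we have'', i.e., it is meant to follow directly from Lemma~\ref{Lemma V} by restricting to $L^0$ and to each degree of $\vec V$, just as you outline.
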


\begin{proof}[Proof of Proposition~\ref{prop: equiv de module}]
 Let $V = (\vec{V},l,r,h_v)$ be a representation of $L$. According to  Lemma~\ref{Lemma V}, $(\vec{V},l,r)$ is a representation of the dg Leibniz algebra $(\vec{L},[-,-])$. Thus the semidirect product
 \[
  (\overrightarrow{L \oplus V},\{-,-\}=[-,-]+l+r)
 \]
 is a dg Leibniz algebra. Meanwhile, it is clear that $\{-,-\}$ is skewsymmetric up to chain homotopy $H_2 = h_2 + h_v$. Hence, $L \oplus V = (\overrightarrow{L \oplus V},\{-,-\}=[-,-]+l+r, H_2 = h_2 + h_v)$ is a hemistrict Lie 2-algebra.

 Conversely, to reconstruct the semidirect product of $L$ by a 2-term cochain complex $\vec{V}$, one needs the two cochain maps $l,r$ in Equation~\eqref{Equ. action} to recover the underlying semidirect product of dg Leibniz algebra and the chain homotopy $h_v$ to control the skewsymmetry of the Leibniz bracket on $\vec{L} \oplus \vec{V}$. Moreover, it follows from straightforward verifications that the semidirect product $L \oplus V$ of hemistrict Lie 2-algebra can be completed by $(l,r,h_v)$ only if they give rise to a representation of $L$ on $\vec{V}$.
\end{proof}

Now we study morphisms of representations.
\begin{defn}
  Let $L = (\vec{L},[-,-],h_2)$ be a hemistrict Lie 2-algebra with $V = (\vec{V},l,r,h_v)$, $V^\prime = (\vec{V}^\prime, l^\prime,r^\prime,h_v^\prime)$ and $V^{\prime\prime} = (\vec{V}^{\prime\prime}, l^{\prime\prime}, r^{\prime\prime}, h_v^{\prime\prime})$ being its representations.
  \begin{compactenum}
  \item A morphism $\phi: V \to V^\prime$ is a triple $(\phi_1,\phi_l,\phi_r)$, where $\phi_1\colon \vec{V} \to \vec{V}^\prime$ is a cochain map, $\phi_l\colon L^0 \otimes V^0 \to V^{\prime -1}$ and $\phi_r\colon V^0 \otimes L^0 \to V^{\prime -1}$ are two chain homotopies, such that the following conditions hold:
  \begin{align}\label{Eq: flandl}
    l^\prime(x,\phi_1(v)) - \phi_1(l(x,v))  &= d(\phi_l)(x,v), \\
    r^\prime(\phi_1(v),x) - \phi_1(r(v,x))  &= d(\phi_r)(v,x), \notag \\
    h_v^\prime(x,\phi_1(v)) - \phi_1(h_v(x,v)) &= \phi_l(x,v) + (-1)^{\abs{x}\abs{v}}\phi_r(v,x), \notag
  \end{align}
  for all $x \in L^\bullet$ and $v \in V^\bullet$.
  \item  Given two morphisms of representations
   \begin{align*}
   \phi=(\phi_1,\phi_l,\phi_r): V &\rightarrow V^\prime,  & \psi = (\psi_1,\psi_l,\psi_r): V^\prime &\rightarrow V^{\prime\prime},
   \end{align*}
   their composition $\psi \circ \phi = ((\psi \circ \phi)_1,(\psi \circ \phi)_l,(\psi \circ \phi)_r): V \to V^{\prime\prime}$ is defined by
   \begin{align*}
     (\psi \circ \phi)_1 &= \psi_1 \circ \phi_1: \vec{V} \to \vec{V}^{\prime\prime}, \\
     (\psi \circ \phi)_l  &= \psi_l \circ (\id_{L^0} \otimes \phi_1) + \psi_1 \circ \phi_l: L^0 \otimes V^0 \to V^{\prime\prime -1}, \\
     (\psi \circ \phi)_r &= \psi_r \circ (\phi_1 \otimes \id_{L^0}) + \psi_1 \circ \phi_r: V^0 \otimes L^0 \to V^{\prime\prime -1}.
   \end{align*}
  \item Two representations $V,V^\prime$ are said to be isomorphic, if there exist morphisms $\phi\colon V \to V^\prime$ and $\psi\colon V^\prime \to V$ such that $\psi \circ \phi = \id_V := (\id_{\vec{V}},0,0)$ and $\phi \circ \psi = \id_{V^\prime} := (\id_{\vec{V}^\prime},0,0)$.
  \end{compactenum}
\end{defn}
It can be directly verified that the collection of representations of $L$ and their morphisms form a category $\Rep(L)$, called the category of representations of the hemistrict Lie 2-algebra $L$.
\begin{lem}\label{Key lemma on rep}
  Any object $V = (\vec{V},l,r,h_v)$ in $\Rep(L)$ is isomorphic to the associated symmetric representation $V_s = (\vec{V},l,-l,0)$.
\end{lem}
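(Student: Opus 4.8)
The plan is to write down an explicit isomorphism $\phi\colon V\to V_s$ whose underlying cochain map is $\id_{\vec V}$, together with its inverse $\psi\colon V_s\to V$, and then to check that the two composites are the identity morphisms. Concretely, I would set $\phi=(\id_{\vec V},\phi_l,\phi_r)$ with $\phi_l=0$ and $\phi_r(v,x)=-h_v(x,v)$ for $x\in L^0,\ v\in V^0$, and dually $\psi=(\id_{\vec V},0,\psi_r)$ with $\psi_r(v,x)=h_v(x,v)$. The geometric idea is that $V$ and $V_s$ share the same underlying complex and the same left action, differing only in the right action by the exact amount $d(h_v)$ and in the action homotopy by $h_v$ itself, so the correction is entirely absorbed by the homotopy data $\phi_r$.

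\emph{Step 1: these triples are morphisms of representations.} Since $V$ and $V_s$ carry the same $\vec V$ and the same $l$, the first equation of \eqref{Eq: flandl} becomes $l(x,v)-l(x,v)=d(\phi_l)(x,v)$, which holds because $\phi_l=0$. For the second equation I would first observe that, after substituting $r_xv=(-1)^{\abs{x}\abs{v}}r(v,x)$, condition \eqref{Eq: comm uptohomotopy} is equivalent to $r(v,x)=d(h_v)(x,v)-l(x,v)$; hence the right action of $V_s$ is $r_s(v,x)=-l(x,v)$ and $r_s(v,x)-r(v,x)=-d(h_v)(x,v)$. The required identity $r_s(v,x)-r(v,x)=d(\phi_r)(v,x)$ then reduces to $d(\phi_r)(v,x)=-d(h_v)(x,v)$, which is exactly what $\phi_r(v,x)=-h_v(x,v)$ delivers, the coboundary $d(-)$ being computed from the same bilinear datum extended by zero. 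Finally the third equation of \eqref{Eq: flandl} reads $-h_v(x,v)=\phi_l(x,v)+(-1)^{\abs{x}\abs{v}}\phi_r(v,x)$, which only has content for $x\in L^0,\ v\in V^0$ and there is immediate. The verification that $\psi$ is a morphism is word for word the same, using $r(v,x)-r_s(v,x)=d(h_v)(x,v)$.

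\emph{Step 2: $\phi$ and $\psi$ are mutually inverse.} Applying the composition formula for morphisms of representations and using $\phi_1=\psi_1=\id_{\vec V}$ and $\phi_l=\psi_l=0$, one gets $(\psi\circ\phi)_1=\id_{\vec V}$, $(\psi\circ\phi)_l=0$, and $(\psi\circ\phi)_r=\psi_r+\phi_r$; the last term vanishes since $\psi_r(v,x)=h_v(x,v)=-\phi_r(v,x)$. Hence $\psi\circ\phi=(\id_{\vec V},0,0)=\id_V$, and symmetrically $\phi\circ\psi=\id_{V_s}$, which gives $V\cong V_s$ in $\Rep(L)$.

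The conceptual content is minimal; the only place demanding care is Step 1 when unwinding the chain-homotopy coboundary $d(\phi_r)$ on the mixed-degree pieces $L^{-1}\otimes V^0$ and $L^0\otimes V^{-1}$, where $\phi_r$ and $h_v$ are extended by zero and one must check that the boundary contributions $\phi_r(v,dx)$ and $\phi_r(dv,x)$ reproduce $h_v$ evaluated on the corresponding differentials. This is a routine sign check once the conventions for $d(-)$ on such chain homotopies are fixed, and it is the only potential obstacle.
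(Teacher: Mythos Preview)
Your proposal is correct and is essentially the same as the paper's proof: the paper writes down the very same pair $\phi=(\id_{\vec V},0,-h_v)\colon V\to V_s$ and $\psi=(\id_{\vec V},0,h_v)\colon V_s\to V$ and declares the verifications straightforward. Your write-up simply unpacks those verifications in detail, so there is nothing to add.
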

\begin{proof}
 It follows from straightforward verifications that the pair of morphisms
 \begin{align*}
   \phi &= (\id_{\vec{V}}, 0, -h_v)\colon  V \to V_s , & \psi&= (\id_{\vec{V}}, 0, h_v)\colon  V_s \to V
 \end{align*}
 gives rise to the desired isomorphisms.
\end{proof}
For this reason, we may view $V_s$ as a minimal model of $V$, which plays a central role in our construction of cohomology of $V$ in the subsequent section.

Finally, we consider pullback representations.
Let $f = (f_1,f_2)\colon L^\prime \to L$ be a morphism of hemistrict Lie 2-algebras.
By Equation~\eqref{Eq: f1uptof2}, $f_1$ does not preserve the brackets $[-,-]^\prime$ and $[-,-]$ strictly but up to a chain homotopy $f_2$. Thus, representations cannot be pulled back in general.
We make the following
\begin{defn}
  A representation $V = (\vec{V},l,r,h_v)$ of $L$ is said to be $f$-compatible (or compatible with $f$), if the left action $l$ vanishes along the image of the chain homotopy $f_2$, i.e.,
  \[
  l_{f_2(x^\prime,y^\prime)} = 0 \in \Hom(V^0,V^{-1}),
  \]
  for all $x^\prime,y^\prime \in L^{\prime 0}$.
\end{defn}

As an immediate consequence, we have
\begin{prop}
Let $f\colon L^\prime \to L$ be a morphism of hemistrict Lie 2-algebras and $V$ an $f$-compatible representation. Then there is a pullback representation $V^\prime$ of $L^\prime$ on $\vec{V}$ defined by
\begin{align*}
  l^\prime(x^\prime,v) &= l(f_1(x^\prime),v), & r^\prime(v,x^\prime) &= r(v,f_1(x^\prime)), &
  h_v^\prime(x^\prime,v) &= h_v(f_1(x^\prime),v),
\end{align*}
for all $x^\prime \in L^\prime$ and $v \in V^\bullet$. Furthermore, if $V$ is symmetric, so is $V^\prime$.
\end{prop}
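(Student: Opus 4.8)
The plan is to verify directly that the triple $(l',r',h'_v)$ defined by the stated formulas satisfies every axiom of Definition~\ref{Def of rep} for $L'$, the organizing principle being that every term witnessing the failure of $f_1$ to be a strict morphism is controlled by the chain homotopy $f_2$, and that $f$-compatibility is precisely what makes those terms vanish. Since $f_1$ is a cochain map, $l'$ and $r'$ are cochain maps and $h'_v$ is a well-defined degree $(-1)$ map $L'^0\odot V^0\to V^{-1}$, so the data is of the right type. For the left-representation axiom~\eqref{Eq: lisleftrep} I would write $l'_{[x',y']'}v=l_{f_1([x',y']')}v$ and use~\eqref{Eq: f1uptof2} to replace $f_1([x',y']')$ by $[f_1(x'),f_1(y')]$ up to the correction $d(f_2)(x',y')$; this correction lies in the image of $f_2$ or of $d\circ f_2$, hence is annihilated by $l$ thanks to $f$-compatibility (using that $l$ is a cochain map to propagate the vanishing through $d$), after which axiom~\eqref{Eq: lisleftrep} for $V$ closes the computation. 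Axiom~\eqref{Eq: comm uptohomotopy} for $V'$ is immediate from the same axiom for $V$ evaluated at $f_1(x')$, together with the identity $d(h'_v)(x',v)=d(h_v)(f_1(x'),v)$, which again uses only that $f_1$ is a cochain map.

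It remains to check the compatible equations~\eqref{Eq: compatible equs}--\eqref{Eq: landhl}. The identity $r'_{x_2}h'_v(x_1,v)=r(h_v(f_1(x_1),v),f_1(x_2))=0$ follows at once from the first of~\eqref{Eq: compatible equs} for $V$; the identity $l'_{h'_2(x_1,x_2)}v=l_{f_1(h'_2(x_1,x_2))}v=0$ follows by rewriting $f_1(h'_2(x_1,x_2))$ via~\eqref{Eq: h2andf2} as $h_2(f_1(x_1),f_1(x_2))$ minus the symmetric part of $f_2$, the $h_2$-term being killed by the second of~\eqref{Eq: compatible equs} for $V$ and the $f_2$-terms by $f$-compatibility. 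The delicate case is~\eqref{Eq: landhl}: expanding both sides and invoking~\eqref{Eq: landhl} for $V$ at $(f_1(x_1),f_1(x_2))$ reduces the whole equation to the single identity $h_v\!\bigl(d(f_2(x_1,x_2)),v\bigr)=0$, and by~\eqref{Eq: comm uptohomotopy} for $V$ this quantity equals $\bigl(l_{f_2(x_1,x_2)}+r_{f_2(x_1,x_2)}\bigr)v$, which vanishes under $f$-compatibility. A cleaner way to organize this last point is to first prove the statement for symmetric $V$ — where $h_v=0$, so $h'_v=0$ and~\eqref{Eq: landhl} becomes vacuous, while $r=-l$ forces all the $f_2$-corrections to be killed by the single hypothesis $l_{f_2(\cdot,\cdot)}=0$ — and then to deduce the general case from Lemma~\ref{Key lemma on rep}: the associated symmetric model $V_s=(\vec V,l,-l,0)$ has the same left action and is therefore $f$-compatible, its pullback is a bona fide representation of $L'$, and the isomorphism $V\cong V_s$ of Lemma~\ref{Key lemma on rep} transports to an isomorphism identifying $V'$ with that pullback.

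I expect the compatible equation~\eqref{Eq: landhl} — the interaction between the bracket-defect $f_2$ of the morphism and the action homotopy $h_v$ of the representation — to be the only real obstacle; it is the sole place where $f$-compatibility is used for anything beyond book-keeping, and it is exactly what dictates the hypothesis in the statement. Finally, the last assertion is immediate: if $V$ is symmetric then $h_v=0$ and $r(v,x)=-l(x,v)$, whence $h'_v(x',v)=h_v(f_1(x'),v)=0$ and $r'(v,x')=r(v,f_1(x'))=-l(f_1(x'),v)=-l'(x',v)$, so $V'$ is symmetric as well.
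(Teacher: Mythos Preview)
The paper states this proposition as an ``immediate consequence'' of the definition of $f$-compatibility and offers no proof, so your direct verification is exactly the natural route and most of it is correct. However, there is a genuine gap in your check of Equation~\eqref{Eq: landhl}. You correctly reduce that axiom for $V'$ to the vanishing of $h_v\bigl(df_2(x_1,x_2),v\bigr)$, and then invoke~\eqref{Eq: comm uptohomotopy} to rewrite this as $\bigl(l_{f_2(x_1,x_2)}+r_{f_2(x_1,x_2)}\bigr)v$. But $f$-compatibility only asserts $l_{f_2(x_1,x_2)}=0$; it says nothing about $r_{f_2(x_1,x_2)}$, and for $\alpha\in L^{-1}$ with $l_\alpha=0$ one has in general $r_\alpha v = h_v(d\alpha,v)$, which need not vanish. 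So the step ``which vanishes under $f$-compatibility'' is unjustified as written.

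Your alternative route via Lemma~\ref{Key lemma on rep} does not close this gap either. The symmetric case does go through cleanly (with $h_v=0$ the axiom~\eqref{Eq: landhl} is vacuous and $r=-l$ makes every $f_2$-correction die under $l_{f_2}=0$), but the transport step is circular: Lemma~\ref{Key lemma on rep} gives an isomorphism $V\cong V_s$ in $\Rep(L)$, and pulling back $V_s$ yields a genuine $L'$-module $(V_s)'$; however, saying that this ``transports to an isomorphism identifying $V'$ with that pullback'' already presupposes that the specific triple $(l',r',h'_v)$ defines an $L'$-module, which is precisely the claim at stake. What survives is only the weaker conclusion that $\vec V$ carries \emph{some} pullback $L'$-module structure (namely the symmetric one $(V_s)'$), not that the formulas in the proposition satisfy~\eqref{Eq: landhl}. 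If you want to rescue the direct argument you would need an additional reason why $h_v(d\beta,v)=0$ for every $\beta$ in the image of $f_2$---for instance a stronger compatibility hypothesis, or an argument specific to the situations where the proposition is actually used later (e.g.\ representations with $l_\alpha=0$ for all $\alpha\in L^{-1}$ on a target $L$ with particular properties).
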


\begin{example}
  Let $f = (f_1,f_2)\colon L^\prime \to L$ be a morphism of hemistrict Lie 2-algebras. Any representation of $L$ on an ordinary vector space $V$ can be pulled back by $f$ to obtain a pullback representation of $L^\prime$.
\end{example}

\section{Cohomology of hemistrict Lie 2-algebras}\label{sec: coh}
In this section, inspired by the functorial construction of standard cohomology of Courant-Dorfman algebras by Roytenberg~\cite{Roy2}, we associate a cochain complex $C^\bullet(L,V)$, also called standard complex, to each representation $V$ of a hemistrict Lie 2-algebra $L$. We prove that this assignment gives rise to a functor from the category $\Rep(L)$ of representations of $L$ to the category of cochain complexes on the one hand, and on the other hand a functor from the category of hemistrict Lie 2-algebras to the category of cochain complexes.

\subsection{The standard complex}
Let $L = (\vec{L},[-,-],h_2)$ be a hemistrict Lie 2-algebra.
The symmetric pairing $h_2\colon L^0 \otimes L^0 \rightarrow L^{-1}$ induces a graded skewsymmetric bilinear map $h_2\colon L^0[1] \wedge L^0[1] \rightarrow L^{-1}[1]$. It determines a graded Lie algebra structure on the vector space $L[1]$. Let
\[
 U(L[1]) = \oplus_{n\geq0}U(L[1])_{-n} = \oplus_{n\geq0}\oplus_{k=0}^{\lfloor\frac{n}{2}\rfloor} \left(\otimes^{(n-2k)}(L^0[1]) \otimes S^k(L^{-1}[1])\right)/R
\]
be the universal enveloping algebra of $L[1]$, where $R$ is the subspace generated by elements of the form
\begin{align*}
  x_1 &\otimes \cdots \otimes x_i \otimes x_{i+1} \otimes \cdots \otimes x_{n-2k} \otimes \alpha_1\odot\cdots\odot\alpha_k \\
     &+ x_1 \otimes \cdots \otimes x_{i+1} \otimes x_{i} \otimes \cdots \otimes x_{n-2k} \otimes \alpha_1\odot\cdots\odot \alpha_k \\
     &+ x_1 \otimes \cdots \otimes \widehat{x_{i}} \otimes \widehat{x_{i+1}} \otimes \cdots \otimes x_{n-2k} \otimes h_2(x_{i},x_{i+1})\odot\alpha_1\odot\cdots \odot\alpha_k,
\end{align*}
for all $x_1,\cdots,x_{n-2k} \in L^0[1], \alpha_1,\cdots,\alpha_k \in L^{-1}[1]$ and all $1 \leq i \leq n-2k-1, 0 \leq k \leq \lfloor\frac{p}{2}\rfloor$.

Let $V = (\vec{V},l,r,h_v)$ be a representation of $L$. Define
\begin{align*}
 C^\bullet(L,V) &= \oplus_{n \geq {-1}}C^n(L,V) = \oplus_{n\geq -1}(C^{n+1}(L,V^{-1}) \oplus C^n(L,V^0)) \\
    &= \oplus_{n\geq-1}(\Hom(U(L[1])_{-(n+1)},V^{-1}) \oplus \Hom(U(L[1])_{-n}, V^{0})),
\end{align*}
where our convention is that $C^{-1}(L,V^0) = 0$ and $U(L[1])_{-1} = 0$.
It follows that the degree $n$ component $C^n(L,V)$ consists of $(n+2)$-tuples
\[
 \omega = (\omega^{(p)}_0,\omega^{(p)}_1, \cdots, \omega^{(p)}_{\lfloor\frac{n-p}{2} \rfloor}),
\]
where $p = -1$ or $0$, and
\[
 \omega^{(p)}_k\colon \otimes^{(n-p-2k)}(L^0[1]) \otimes S^k(L^{-1}[1]) \rightarrow V^{p},
\]
satisfies the following weak symmetry properties:
\begin{align}\label{Eq: weak symmetry}
 &\quad \omega^{(p)}_k(\cdots,x_i,x_{i+1},\cdots \mid \cdots) + \omega^{(p)}_k(\cdots,x_{i+1},x_{i},\cdots \mid \cdots) \notag \\
 &= - \omega^{(p)}_{k+1}(\cdots,\widehat{x_i},\widehat{x_{i+1}},\cdots \mid h_2(x_i,x_{i+1}), \cdots),
\end{align}
for all $1 \leq i \leq n-p-2k-1$.

Next, we construct a differential on $C^\bullet(L,V)$. According to Lemma~\ref{Key lemma on rep}, the representation $V$ is isomorphic to its minimal model $V_s = (\vec{V},l,-l,0)$. For this reason, our construction of differential below only depends on this symmetric representation $V_s$. By Lemma~\ref{Lemma V}, $(\vec{V},l,-l)$ is a symmetric representation of the dg Leibniz algebra $(\vec{L}, [-,-])$. Moreover, we have the following
\begin{lem}\label{Lemma: differential}
The differential $D$ of the Loday-Pirashvili complex of the symmetric representation $(\vec{V},l,-l)$ of $(\vec{L},[-,-])$ determines a differential on $C^\bullet(L,V)$
\begin{equation}\label{Eq: Differential}
 D = \delta + d_{\LP}\colon C^n(L,V) \rightarrow C^{n+1}(L,V),
\end{equation}
defined for all $\omega = (\omega^{(p)}_0,\cdots, \omega^{(p)}_{\lfloor\frac{n-p}{2}\rfloor}) \in C^n(L;V)$,
\begin{align*}
  (\delta\omega)^{(-1)}_k(x_1,\cdots,x_{n+2-2k} &\mid \alpha_1,\cdots,\alpha_k) = \sum_{i=1}^k\omega^{(-1)}_{k-1}(d\alpha_i,x_1,\cdots,x_{n+2-2k}\mid \alpha_1,\cdots,\widehat{\alpha_i},\cdots,\alpha_k) \\
  (\delta\omega)^{(0)}_k(x_1,\cdots,x_{n+1-2k}&\mid \alpha_1,\cdots,\alpha_k) = \sum_{i=1}^k\omega^{(0)}_{k-1}(d\alpha_i,x_1,\cdots,x_{n+1-2k}\mid \alpha_1,\cdots,\widehat{\alpha_i},\cdots,\alpha_k) \\
  &\quad + (-1)^{n+1}d(\omega^{(-1)}_k(x_1,\cdots,x_{n+1-2k}\mid \alpha_1,\cdots, \alpha_k)), \\
  (d_{\LP}\omega)^{(0)}_k(x_1,\cdots,x_{n+1-2k} &\mid \alpha_1,\cdots,\alpha_k)
= (d_{\LP}^{V^0}\omega^{(0)}_k(\cdots \mid \alpha_1,\cdots,\alpha_k)) (x_1,\cdots,x_{n+1-2k}) \\
  &\quad + \sum_{i=1}^{n+1-2k}\sum_{j=1}^k(-1)^{i+1} \omega^{(0)}_k(\cdots,\widehat{x_i},\cdots \mid \cdots,[\alpha_j,x_i],\cdots), \\
(d_{\LP}\omega)^{(-1)}_k(x_1,\cdots,x_{n+2-2k} &\mid \alpha_1,\cdots,\alpha_k)
= d_{\LP}^{V^{-1}}(\omega^{(-1)}_k(\cdots \mid \alpha_1,\cdots,\alpha_k)) (x_1,\cdots,x_{n+2-2k}) \\
&\quad +(-1)^{n}\sum_{j=1}^{k} l_{\alpha_j}\omega_{k-1}^{(0)}(x_1,\cdots, x_{n+2-2k} \mid \alpha_1,\cdots, \widehat{\alpha_j}, \cdots,\alpha_k) \\
&\quad + \sum_{i=1}^{n+2-2k}\sum_{j=1}^k(-1)^{i+1} \omega^{(-1)}_k(\cdots,\widehat{x_i},\cdots \mid \cdots,[\alpha_j,x_i],\cdots),
\end{align*}
where we have viewed $\omega^{(p)}_k(\cdots \mid \alpha_1,\cdots,\alpha_k)$ as a degree $(n-p-2k)$ element of Loday-Pirashvili cochain complex of the symmetric $L^0$-module $(V^p,l,-l)$.
\end{lem}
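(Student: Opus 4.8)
The plan is to realize $C^\bullet(L,V)$ as a subcomplex of a Loday--Pirashvili complex and to deduce everything from that. By Lemma~\ref{Lemma V}, $(\vec V,l,-l)$ is a symmetric representation of the dg Leibniz algebra $(\vec L,[-,-])$, so the Loday--Pirashvili bicomplex $\bigl(C^\bullet(\vec L,(\vec V,l,-l)),\,D=\delta+d_{\LP}\bigr)$ is defined and $D^2=0$ by~\cite{LP, AP}. As a graded vector space $C^\bullet(L,V)$ depends only on $\vec V$, and $U(L[1])$ is the quotient of the tensor algebra on $L^\bullet[1]$ by the relations making $L^{-1}[1]$ central and imposing $x\otimes x'+x'\otimes x=h_2(x,x')$ for $x,x'\in L^0[1]$; hence, applying $\Hom(-,\vec V)$, the space $C^\bullet(L,V)$ is identified, after the usual décalage sign twist, with the subspace of $C^\bullet(\vec L,(\vec V,l,-l))$ consisting of those cochains $\eta$ that descend to $U(L[1])$ -- and in the $\omega^{(p)}_k$-coordinates this subspace is exactly the locus of the weak symmetry relations~\eqref{Eq: weak symmetry}. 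The first, purely bookkeeping, task is to rewrite $D$ in these coordinates: evaluating $D\eta$ on a normal-form monomial $x_1\otimes\cdots\otimes x_m\otimes\alpha_1\odot\cdots\odot\alpha_k$ and pushing each bracket $[\alpha_j,x_i]\in L^{-1}$ and each degree $-1$ argument $\alpha_j$ back to normal form -- using centrality of $L^{-1}[1]$ and the symmetry $r=-l$ -- produces precisely the extra summands $\omega^{(0)}_k(\cdots\widehat{x_i}\cdots\mid\cdots[\alpha_j,x_i]\cdots)$, $\omega^{(-1)}_k(\cdots\widehat{x_i}\cdots\mid\cdots[\alpha_j,x_i]\cdots)$ and $(-1)^n l_{\alpha_j}\omega^{(0)}_{k-1}(\cdots)$ occurring in the statement; I would not write this out in full.

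Granting this identification, $D^2=0$ on $C^\bullet(L,V)$ is automatic once we know that $D$ maps $C^\bullet(L,V)$ into itself, i.e. that $D$ preserves the property of descending to $U(L[1])$. It suffices to check that $D\eta$ kills the generating relations: (a) $D\eta$ is invariant under transposing two adjacent arguments, one of which lies in $L^{-1}$; and (b) for adjacent $x_i,x_{i+1}\in L^0$, $D\eta(\cdots,x_i,x_{i+1},\cdots)+D\eta(\cdots,x_{i+1},x_i,\cdots)=-D\eta(\cdots,\widehat{x_i},\widehat{x_{i+1}},\cdots,h_2(x_i,x_{i+1}))$, with the new argument $h_2(x_i,x_{i+1})$ placed among the $L^{-1}$-slots. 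Part (a) is the easy one: because $\vec V$ is concentrated in two degrees, $l_\alpha l_{\alpha'}=0$ and $[\alpha,\alpha']=0$ for $\alpha,\alpha'\in L^{-1}$, while $[x,\alpha]=-[\alpha,x]$ and $l_{[x,\alpha]}=l_xl_\alpha-l_\alpha l_x$ by~\eqref{Eq: anticomm up homotopy} and~\eqref{Eq: lisleftrep}; together with (a) for $\eta$ itself these give (a) for $D\eta$.

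Part (b) is where the real work lies, and is the step I expect to be the main obstacle. Expanding $D\eta$ at both orderings via~\eqref{Eq: Differential} and~\eqref{eq: Leibniz coboundary} and adding, the terms in which $x_i$ and $x_{i+1}$ both survive as arguments of $\eta$ pair up and, by the weak symmetry of $\eta$ itself, collapse onto the matching terms of the right-hand side. The remaining terms are disposed of using the structural identities: Equation~\eqref{Eq: anticomm up homotopy} turns $[x_i,x_{i+1}]+[x_{i+1},x_i]$ into $d(h_2)(x_i,x_{i+1})$, which is absorbed through a $\delta$--$d_{\LP}$ cross-cancellation against the term on the right obtained by differentiating the new $L^{-1}$-argument $h_2(x_i,x_{i+1})$; the action terms $l_{x_i}\eta(\cdots)$, $l_{x_{i+1}}\eta(\cdots)$ and mixed bracket terms $\eta(\cdots[x_i,y]\cdots)$, $\eta(\cdots[x_{i+1},y]\cdots)$ either cancel in pairs because of the opposite positional signs $(-1)^{i-1}$ and $(-1)^i$ in~\eqref{eq: Leibniz coboundary}, or recombine -- via the weak symmetry of $\eta$ together with~\eqref{Eq1: hemi2},~\eqref{Eq2: hemi2} and~\eqref{Eq: anticomm up homotopy} -- into terms matching the bracket terms $[y,h_2(x_i,x_{i+1})]$ on the right; and the one representation compatibility that survives for the symmetric model $V_s$, namely $l_{h_2(x_i,x_{i+1})}=0$ (the conditions~\eqref{Eq: compatible equs}--\eqref{Eq: landhl} being vacuous since $h_v=0$), kills the term $r_{h_2(x_i,x_{i+1})}\eta(\cdots)=-l_{h_2(x_i,x_{i+1})}\eta(\cdots)$ coming from the $r$-action in~\eqref{eq: Leibniz coboundary}. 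Pushing this cancellation through with every sign correct, in the presence of the other $L^0$- and $L^{-1}$-arguments, is the laborious core; once it is in hand, $D$ restricts to $C^\bullet(L,V)$, $D^2=0$ descends from the ambient Loday--Pirashvili complex, and the coordinate formulas in the statement are exactly this restriction.
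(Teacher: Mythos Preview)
Your proposal is correct and follows essentially the same route as the paper: the paper also reduces the lemma to showing that $D$ preserves the weak symmetry~\eqref{Eq: weak symmetry}, and carries out the explicit cancellation for $(D\omega)^{(-1)}_k$ using exactly the identities you identify---\eqref{Eq: anticomm up homotopy}, \eqref{Eq1: hemi2}, \eqref{Eq2: hemi2}, $l_{h_2(-,-)}=0$, and the weak symmetry of $\omega$ itself---then defers the analogous $(D\omega)^{(0)}_k$ computation to~\cite{Cai}. Your framing via descent to $U(L[1])$ and the explicit separation into cases (a) and (b) is slightly more conceptual, but the substance of the argument is the same.
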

\begin{proof}
 It suffices to show that the differential $D$ preserves the weak symmetry property~\eqref{Eq: weak symmetry}. In fact, for any $\omega \in C^n(L,V)$,
 \begin{align*}
    &\quad (\delta\omega)^{(-1)}_k(\cdots,x_s,x_{s+1},\cdots \mid \alpha_1,\cdots,\alpha_k) + (\delta\omega)^{(-1)}_k(\cdots,x_{s+1},x_s,\cdots \mid \alpha_1,\cdots,\alpha_k) \\
    &= \sum_{i=1}^k\omega^{(-1)}_{k-1}(d\alpha_i,\cdots,x_s,x_{s+1},\cdots\mid \cdots,\widehat{\alpha_i},\cdots) + \omega^{(-1)}_{k-1}(d\alpha_i,\cdots,x_{s+1},x_s,\cdots \mid \cdots,\widehat{\alpha_i},\cdots) \\
    &=-\sum_{i=1}^k\omega_{k}^{(-1)}(d\alpha_i,\cdots,\widehat{x_s}, \widehat{x_{s+1}}, \cdots\mid h_2(x_s,x_{s+1}),\cdots,\widehat{\alpha_i}, \cdots) \\
    &=-(\delta\omega)_{k+1}^{(-1)}(\cdots,\widehat{x_s}, \widehat{x_{s+1}}, \cdots\mid h_2(x_s,x_{s+1}),\cdots) + \omega_{k}^{(-1)}(dh_2(x_s,x_{s+1}),\cdots,\widehat{x_s}, \widehat{x_{s+1}},\cdots \mid \cdots ).
 \end{align*}
 Meanwhile, since $[h_2(x_s,x_{s+1}),x_i] = 0$ and $l(h_2(x_s,x_{s+1}),v) = 0$ for all $x_i \in L^0, v \in V^0$, it follows that
\begin{align*}
  &\quad (d_{\LP}\omega)^{(-1)}_k(\cdots,x_s,x_{s+1},\cdots \mid \cdots ) + (d_{\LP}\omega)^{(-1)}_k(\cdots,x_{s+1},x_s,\cdots \mid \cdots ) \\
  &= -(d_{\LP}\omega)^{(-1)}_{k+1}(\cdots,\widehat{x_s}, \widehat{x_{s+1}},\cdots \mid h_2(x_s,x_{s+1}),\cdots) + (-1)^s\omega_{k}^{(-1)}(\cdots,dh_2(x_s,x_{s+1}),\cdots\mid\cdots) \\
  &\quad + \sum_{i<s}(-1)^i(\omega_k^{(-1)}(\cdots,\widehat{x_i},\cdots, [x_i,x_{s}],x_{s+1},\cdots\mid\cdots)+\omega_k^{(-1)}(\cdots, \widehat{x_i},\cdots, x_s,[x_i,x_{s+1}],\cdots\mid\cdots)) \\
  &\quad + \sum_{i<s}(-1)^i(\omega_k^{(-1)}(\cdots,\widehat{x_i},\cdots, [x_i,x_{s+1}], x_{s},\cdots\mid\cdots)+\omega_k^{(-1)}(\cdots, \widehat{x_i},\cdots,x_{s+1},[x_i,x_{s}],\cdots\mid \cdots)) \\
  &= -(d_{\LP}\omega)^{(-1)}_{k+1}(\cdots,\widehat{x_s}, \widehat{x_{s+1}},\cdots \mid h_2(x_s,x_{s+1}),\cdots) + (-1)^s\omega_{k}^{(-1)}(\cdots,dh_2(x_s,x_{s+1}),\cdots\mid\cdots) \\
  &\quad -\sum_{i<s}(-1)^i\omega_k^{(-1)}(\cdots,\widehat{x_i},\cdots, \widehat{x_s},\widehat{x_{s+1}},\cdots\mid h_2([x_i,x_s],x_{s+1}),\cdots) \\
  &\quad -\sum_{i<s}(-1)^i\omega_k^{(-1)}(\cdots,\widehat{x_i},\cdots, \widehat{x_s},\widehat{x_{s+1}},\cdots\mid h_2(x_s,[x_i,x_{s+1}]),\cdots) \qquad \text{by Equation~\eqref{Eq2: hemi2}}\\
  &= -(d_{\LP}\omega)^{(-1)}_{k+1}(\cdots,\widehat{x_s}, \widehat{x_{s+1}},\cdots \mid h_2(x_s,x_{s+1}),\cdots) + (-1)^s\omega_{k}^{(-1)}(\cdots,dh_2(x_s,x_{s+1}),\cdots\mid\cdots) \\
  &\quad -\sum_{i<s}(-1)^i\omega_k^{(-1)}(\cdots,\widehat{x_i},\cdots, \widehat{x_s},\widehat{x_{s+1}},\cdots\mid [x_i,h_2(x_s,x_{s+1})],\cdots) \qquad\text{by Equations~\eqref{Eq: anticomm up homotopy},\eqref{Eq1: hemi2}}\\
  &= -(d_{\LP}\omega)^{(-1)}_{k+1}(\cdots,\widehat{x_s}, \widehat{x_{s+1}},\cdots \mid h_2(x_s,x_{s+1}),\cdots) + (-1)^s\omega_{k}^{(-1)}(\cdots,dh_2(x_s,x_{s+1}),\cdots\mid\cdots) \\
  &\quad -\sum_{i<s}(-1)^i\omega_k^{(-1)}(\cdots,\widehat{x_i},\cdots, \widehat{x_s},\widehat{x_{s+1}},\cdots\mid h_2(x_i,dh_2(x_s,x_{s+1})),\cdots) \qquad \text{by Equation~\eqref{Eq: weak symmetry}} \\
  &= -(d_{\LP}\omega)^{(-1)}_{k+1}(\cdots,\widehat{x_s}, \widehat{x_{s+1}},\cdots \mid h_2(x_s,x_{s+1}),\cdots) - \omega_{k}^{(-1)}(dh_2(x_s,x_{s+1}),\cdots,\widehat{x_s}, \widehat{x_{s+1}},\cdots \mid \cdots ).
\end{align*}
 Hence, we have proved that $(D\omega)^{(-1)}_k$ satisfies the weak symmetry property~\eqref{Eq: weak symmetry}, i.e.,
 \begin{align*}
  &\quad (D\omega)^{(-1)}_k(\cdots,x_s,x_{s+1},\cdots \mid \alpha_1,\cdots,\alpha_k) + (D\omega)^{(-1)}_k(\cdots,x_{s+1},x_s,\cdots \mid \alpha_1,\cdots,\alpha_k) \\
  &= -(D\omega)_{k+1}^{(-1)}(\cdots,\widehat{x_s}, \widehat{x_{s+1}}, \cdots\mid h_2(x_s,x_{s+1}),\cdots).
 \end{align*}
 By some similar computations (see~\cite{Cai}), one can easily show that $(D\omega)_k^{(0)}$ also satisfies the weak symmetry property~\eqref{Eq: weak symmetry}. This completes the proof.
\end{proof}

\begin{defn}\label{Def: coh}
  Let $L$ be a hemistrict Lie 2-algebra and $V = (\vec{V},l,r,h_v)$ a representation of $L$.
  We call $(C^{\bullet}(L,V),D)$ the standard complex of the hemistrict Lie $2$-algebra $L$ valued in $V$, whose cohomology $H^{\bullet}(L,V)$ is called the cohomology of the representation $V$ of $L$.
\end{defn}
\begin{Rem}
  When the alternator $h_2$ vanishes,  $L$ becomes a strict Lie $2$-algebra. In this case, the cohomology $H^\bullet(L,V)$ defined above is isomorphic to the generalized Chevalley-Eilenberg cohomology~\cites{BSZ, LSZ} of the representation $V^s = (\vec{V},l,-l)$ of the strict Lie $2$-algebra $L$.
\end{Rem}
\begin{example}\label{Ex: CD-algebra}
  Let $(\mathcal{R},\mathcal{E},\langle-,-\rangle,\partial,[-,-])$ be a Courant-Dorfman algebra, where $\mathcal{R}$ is a commutative algebra, $\mathcal{E}$ is an $\mathcal{R}$-module, $\langle-,-\rangle$ is an $\mathcal{R}$-valued symmetric $\mathcal{R}$-bilinear form, $\partial$ is an $\mathcal{E}$-valued derivation of $\mathcal{R}$, and $[-,-]$ is a Dorfman bracket on $\E$. All the data are subjected to several conditions (see~\cite{Roy2}). Let $d_{dR}: \mathcal{R} \to \Omega^1$ be the K\"{a}hler differential of the algebra $\mathcal{R}$. By the universality of $\Omega^1$, there is a $\mathcal{R}$-module morphism $\rho^{\ast}\colon \Omega^1 \to \E$ such that $\partial = \rho^\ast \circ d_{dR}$, which is called the coanchor map of $\mathcal{E}$.
  There exists a hemistrict Lie 2-algebra structure $(\{-,-\},h_2)$ on the $2$-term cochain complex $L(\mathcal{E}) = (\rho^{\ast}\colon \Omega^{1}[1] \to \mathcal{E})$ defined as follows:
   \begin{align*}
    \{e_{1},e_{2}\} & \triangleq  [e_{1}, e_{2}], & \{\alpha,e\} & \triangleq -\iota_{\rho(e)}d_{dR}\alpha,\\
    \{e,\alpha\} & \triangleq  \mathcal{L}_{\rho(e)}\alpha, &  h_{2}(e_{1},e_{2}) & \triangleq  d_{dR}\langle e_{1},e_{2}\rangle,
   \end{align*}
  for all $e_1,e_2 \in \E, \alpha \in \Omega^1[1]$, where $\rho: \mathcal{E} \to \Der(\mathcal{R})$ is the anchor map defined by $\rho(e)f = \langle e,\partial f \rangle$ for all $e \in \mathcal{E}$ and $f \in \mathcal{R}$.
  The anchor map $\rho$ determines a symmetric representation of the hemistrict Lie 2-algebra $L(\mathcal{E})$. It follows that the standard complex $C^{\bullet}(L(\mathcal{E}),\mathcal{R})$ of the hemistrict Lie $2$-algebra $L(\mathcal{E})$ as in Definition~\ref{Def: coh} coincides with the convolution dg algebra in~\cite{Roy2}.
\end{example}

\subsection{Cohomology of low orders}
Let $L = (\vec{L},[-,-],h_2)$ be a hemistrict Lie 2-algebra. Thus, $(\vec{L},[-,-])$ is a dg Leibniz algebra and $L^0$ is a Leibniz subalgebra. Let $V = (\vec{V},l,r,h_v)$ be a representation of $L$. We consider the cohomology $\oplus_{n\geq-1} H^n(L,V)$ of some lower orders.

For $n = -1$, $H^{-1}(L,V)$ equals the zeroth Loday-Pirashvili cohomology $HL^0(L^0,H^{-1}(\vec{V}))$ of the $L^0$-module $H^{-1}(\vec{V}) = \{u \in V^{-1} \mid du = 0\}$, i.e.,
\[
H^{-1}(L,V) = HL^0(L^0,H^{-1}(\vec{V})) = \{u \in V^{-1} \mid du = 0, l(x,u) = r(u,x) = 0, \;\forall x \in L^0\}.
\]

For $n = 0$, a zeroth-cocycle is a pair $(v,f)$, where $v \in V^0$ and $f \in \Hom(L^0,V^{-1})$, satisfying
\begin{align*}
  l(x,v) &= df(x), & l(\alpha,v) &= -f(d\alpha), & d_{\LP}^{V^{-1}}(f) &= 0.
\end{align*}
for all $x \in L^0,\alpha \in L^{-1}$. The two equations can be reinterpreted as
\[
 l(-,v) = -D(f)\colon L^\bullet \to V^\bullet.
\]
It follows that a zeroth-cocycle is a left $(\vec{L},[-,-])$-invariant element $v \in V^0$ up to a chain homotopy $f$. Moreover, $(v,f)$ is a coboundary if it is of the form $(du,l(-,u))$ for some $u \in V^{-1}$.

For $n=1$, a $1$-cocycle $\psi$ is a pair $(\psi_1,\psi_2)$, where
\begin{compactenum}
  \item $\psi_1: \vec{L} \to \vec{V}$ is a cochain map, i.e., the following diagram commutes:
      \[
      \xymatrix{
         L^{-1} \ar[d]_-{d} \ar[r]^-{\psi_1} & V^{-1} \ar[d]^-{d} \\
         L^0 \ar[r]^-{\psi_1} & V^0;
      }
      \]
  \item $\psi_2 \in HL^2(L^0,V^{-1})$, satisfying the weak symmetry~\eqref{Eq: weak symmetry}, is the chain homotopy such that $\psi_1: \vec{L} \rightarrow \vec{V}$ is a derivation of the dg Leibniz algebra $(\vec{L},[-,-])$ up to homotopy, i.e.,
      \begin{align*}
        &l(x,\psi_1(y)) + r(\psi_1(x),y) - \psi_1([x,y]) = dh_v(\psi_1(x),y) - d\psi_2(x,y), \\
        &l(\alpha,\psi_1(x)) + r(\psi_1(\alpha),x) - \psi_1([\alpha,x]) = \psi_2(d\alpha,x) - h_v(d\psi_1(\alpha),x),
      \end{align*}
      for all $x,y \in L^0$ and $\alpha \in L^{-1}$.
\end{compactenum}
It thus follows that a $1$-cocycle $\psi$ is a dg derivation $\psi_1$ of the dg Leibniz algebra $(\vec{L},[-,-])$ up to homotopy valued in $\vec{V}$. A 1-coboundary, called an inner dg derivation, is of the form
\[
(\psi_1,\psi_2) = (l(-,v)-df,d_{\LP}^{V^{-1}}(f)),
\]
for some $v \in V^0$ and $f \in \Hom(L^0,V^{-1})$.

In particular, let $\g$ be a Leibniz algebra and $L_\g = (K[1]\hookrightarrow \g,[-,-]_\g,h_2)$ the hemistrict Lie 2-algebra as in Example~\ref{Ex:Leibniz algebra}. We have
\begin{prop}
 There exists a natural injection sending $HL^1(\g,\g)$ into $H^1(L_\g,\Ad_{L_\g})$.
\end{prop}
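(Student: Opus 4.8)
The plan is to construct the injection explicitly on the level of cocycles, show it descends to cohomology, and verify injectivity by chasing coboundaries. Recall that a Loday--Pirashvili $1$-cocycle of $\g$ with values in the adjoint representation is a $\k$-linear map $\varphi\colon \g \to \g$ satisfying the derivation identity $\varphi([x,y]_\g) = [\varphi(x),y]_\g + [x,\varphi(y)]_\g$ for all $x,y\in\g$, and it is a coboundary iff $\varphi = [-,a]_\g$ for some $a\in\g$ (using the left-module structure; for the symmetric adjoint module the right action contributes the matching sign). On the other side, a $1$-cocycle of $L_\g = (K[1]\hookrightarrow\g,[-,-]_\g,h_2)$ with values in $\Ad_{L_\g}$ is a pair $(\psi_1,\psi_2)$ as described in the discussion preceding the proposition, with $\psi_1\colon \vec{L}_\g \to \vec{L}_\g$ a cochain map and $\psi_2\in HL^2(\g,K)$ a chain homotopy witnessing that $\psi_1$ is a derivation up to homotopy and satisfying the weak symmetry~\eqref{Eq: weak symmetry}.

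First I would define the map $\Phi\colon HL^1(\g,\g)\to H^1(L_\g,\Ad_{L_\g})$ as follows. Given a cocycle $\varphi\colon\g\to\g$, I set $\psi_1^0 := \varphi\colon \g = L_\g^0 \to L_\g^0$. To define $\psi_1$ on $L_\g^{-1} = K[1]$ I must check that $\varphi$ preserves the Leibniz kernel $K\subseteq\g$: indeed $\varphi([x,y]_\g+[y,x]_\g) = [\varphi(x),y]_\g+[x,\varphi(y)]_\g+[\varphi(y),x]_\g+[y,\varphi(x)]_\g\in K$, so $\varphi|_K$ makes sense, and I set $\psi_1^{-1} := [1]\circ\varphi|_K\circ[1]^{-1}$; the square with $d = $ inclusion then commutes by construction. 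For the homotopy $\psi_2$, since $\varphi$ is an honest derivation on $L_\g^0=\g$ (not merely up to homotopy) the first displayed cocycle equation forces $d\psi_2(x,y) = dh_v(\psi_1(x),y)$; here $h_v = h_2$ is the alternator and, recalling $h_2(a,b) = [a,b]_\g+[b,a]_\g$ viewed in $K[1]$, a natural choice is $\psi_2(x,y) := h_2(\varphi(x),y) = [\varphi(x),y]_\g+[y,\varphi(x)]_\g$, shifted into $K[1]$. I would then verify that this $\psi_2$ lies in $HL^2(\g,K)$, satisfies the second cocycle equation, and obeys the weak symmetry~\eqref{Eq: weak symmetry}; these are the computations that use the Leibniz rule in $\g$ together with the compatibility identities~\eqref{Eq1: hemi2}--\eqref{Eq2: hemi2} defining $L_\g$, and the fact that $\Ad_{L_\g}$ has $l_\alpha$ acting as $[\alpha,-]_\g = 0$ for $\alpha\in K$.

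Next I would check that $\Phi$ is well defined on cohomology: if $\varphi = [-,a]_\g$ is a coboundary, then $\psi_1 = l(-,a) = [-,a]_\g$ is, after unwinding, the cochain part of the inner dg derivation $(l(-,v)-df, d_{\LP}^{V^{-1}}(f))$ associated to $v = a\in L_\g^0$ and a suitable $f\in\Hom(\g,K)$ (take $f=0$, so that the $1$-coboundary is $(l(-,a), 0)$); a short computation shows $\psi_2$ differs from $0$ by a Loday--Pirashvili coboundary in $HL^2(\g,K)$, hence $\Phi(\varphi)$ is a $1$-coboundary in $C^\bullet(L_\g,\Ad_{L_\g})$. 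Linearity of $\Phi$ is clear. For injectivity, suppose $\Phi[\varphi] = 0$, i.e.\ $(\psi_1,\psi_2) = (l(-,v)-df, d_{\LP}^{V^{-1}}(f))$ for some $v\in\g$, $f\in\Hom(\g,K)$. Reading off the degree-zero component of $\psi_1$ gives $\varphi = [-,v]_\g - d\circ f$ with $d$ the inclusion $K\hookrightarrow\g$; since the image of $f$ lies in $K$, the class of $\varphi$ in $HL^1(\g,\g)$ equals the class of $[-,v]_\g$ modulo a map into $K$ — here I would need the observation that a derivation of $\g$ landing in $K$ is itself a Loday--Pirashvili coboundary, or more directly that the ambiguity $d\circ f$ can be absorbed, forcing $[\varphi] = [[-,v]_\g] = 0$.

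The main obstacle I anticipate is the bookkeeping in the second item: verifying that the candidate $\psi_2$ simultaneously (i) is a Loday--Pirashvili $2$-cocycle of $\g$ valued in $K$, (ii) satisfies the up-to-homotopy derivation identity in the $L_\g^{-1}$-direction, and (iii) obeys the weak symmetry constraint~\eqref{Eq: weak symmetry}, which ties $\psi_2$'s symmetric part to $\psi_2$ evaluated on $h_2$-images. The three conditions are not independent, and the cleanest route may be to first establish the weak-symmetry of $\psi_2$ directly from the symmetry of $h_2$ and the derivation property of $\varphi$, then deduce the remaining identities using that both $[-,-]_\g$ and the actions are compatible with $d$. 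A secondary subtlety is pinning down exactly which chain homotopy $f$ realizes a given coboundary, so that the injectivity argument closes without circularity; keeping $f$ general (rather than $f=0$) and tracking its contribution through both components of the $1$-coboundary formula should suffice.
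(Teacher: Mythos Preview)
Your construction of $(\psi_1,\psi_2)=(\varphi,\,h_2(\varphi(-),-))$ and the check that $\varphi$ preserves $K$ are exactly the paper's; the paper's proof is in fact terser than your proposal, recording only that derivations go to $1$-cocycles and inner derivations to inner dg derivations, and leaving the remaining verifications (including injectivity) implicit.

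One remark on the spot you correctly flagged as delicate: in your injectivity argument you read off only the $\psi_1$-component and are left trying to show that a $K$-valued piece $d\circ f$ is an LP coboundary on its own, which it need not be. The cleaner route is to first rewrite $[x,v]=-[v,x]+h(v,x)$ so that $\varphi+l_v$ is a genuine derivation with values in $K$, and then to use the $\psi_2$-component of the coboundary equation, $h_2(\varphi(-),-)=d_{\LP}^{V^{-1}}(f)$, to pin down $f$; together these constraints force $[\varphi]=[-l_v]$ in $HL^1(\g,\g)$. (Also, with the paper's conventions the LP $1$-coboundaries for the adjoint module are $-l_a=-[a,-]$, not $[-,a]$; adjusting this sign will make your well-definedness paragraph match the paper's claim that $l_x\mapsto (r_x,0)$.)
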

\begin{proof}
According to Loday and Pirashvili~\cite{LP}, $HL^1(\g,\g) = \Der(\g,\g)/\{\text{inner derivations}\}$. It suffices to assign a(n) (inner) derivation of $(K[1]\oplus\g, [-,-]_\g)$ up to homotopy valued in $K[1]\oplus \g$ to each (inner) derivation of $\g$ valued in $\g$.

On the one hand, each $\g$-valued derivation of $\g$ is, by definition, a linear map $\phi_1: \g \to \g$ satisfying
\[
 \phi_1([x,y]_\g) = [\phi_1(x),y]_\g + [x,\phi_1(y)]_\g,\qquad \forall x,y,\in \g.
\]
It follows that $\phi_1$ maps the Leibniz kernel $K$ to itself. Thus, it extends to a cochain map $\phi_1: K[1] \hookrightarrow \g \to K[1] \hookrightarrow \g$. Define $\phi_2: \g \otimes \g \rightarrow K[1]$ by
\[
 \phi_2(x,y) = h_2(\phi_1(x),y) = [\phi_1(x),y]_\g + [y,\phi_1(x)]_\g,
\]
for all $x,y \in \g$. It follows from a straightforward verification that $\phi = (\phi_1,\phi_2)$ is a 1-cocycle of the adjoint representation $\Ad_{L_\g}$ of $L_\g$.

On the other hand, each inner derivation $l_x$ for some $x \in \g$ gives rise to an inner dg derivation $(r_x,0)$ of $\Ad_{L_\g}$.
\end{proof}

For $n=2$, a 2-cocycle $\omega$ is a quadruple
\begin{align}\label{2-cocycle}
 \omega_1^{(0)}:& L^{-1} \to V^0, & \omega_{0}^{(0)}:& L^0 \otimes L^0 \to V^0, & \omega_{1}^{(-1)}:& L^{0} \wedge L^{-1} \to V^{-1}, & \omega_{0}^{(-1)}: & (L^{0})^{\otimes 3} \to V^{-1},
\end{align}
satisfying the following conditions:
\begin{compactenum}
  \item $\omega_{0}^{(0)}$ and $\omega_{0}^{(-1)} \in HL^3(L^0,V^{-1})$ satisfy the weak symmetry conditions~\eqref{Eq: weak symmetry}.
  \item $\omega_1^{(0)}$ is $(\vec{L},[-,-])$-invariant up to homotopy, i.e.,
     \begin{align}\label{Eq1:2-cocycle}
     \omega_1^{(0)}([\alpha,x]) - r(\omega_1^{(0)}(\alpha),x) &= d\omega_{1}^{(-1)}(\alpha,x) - \omega_{0}^{(0)}(d\alpha,x) - dh_v(\omega_1^{(0)}(\alpha),x),  \\  \label{Eq2: 2-cocycle}
     l(\alpha,\omega_1^{(0)}(\beta)) - r(\omega_1^{(0)}(\alpha),\beta) &= \omega_{1}^{(-1)}(\alpha,d\beta) - \omega_{1}^{(-1)}(d\alpha,\beta) - h_v(\omega_1^{(0)}(\alpha),d\beta),
     \end{align}
     for all $x \in L^0$ and $\alpha,\beta \in L^{-1}$.
  \item $\omega_{0}^{(0)}$ and $\omega_{1}^{(-1)}$ are 2-cocycles up to homotopy of the representation of the graded Leibniz algebra $(L^\bullet,[-,-])$ on $V^\bullet$, i.e.,
      \begin{align}\label{Eq3:2-cocycle}
       -d\omega_{0}^{(-1)}(x,y,z) &= d_{\LP}^{V^0}(\omega_{0}^{(0)})(x,y,z)  \\
       h_v(d\omega_1^{(-1)}(x\mid\alpha),y) - \omega_{0}^{(-1)}(d\alpha,x,y) &= l(x,\omega_{1}^{(-1)}(y \mid \alpha)) + r(\omega_{1}^{(-1)}(x \mid \alpha),y) + l(\alpha,\omega_{0}^{(0)}(x,y)) \notag \\  \label{Eq4:2-cocycle}
       &\quad - \omega_{1}^{(-1)}([x,y] \mid \alpha) + \omega_{1}^{(-1)}(y\mid [\alpha,x]) - \omega_{1}^{(-1)}(x\mid [\alpha,y]),
      \end{align}
      for all $x,y \in L^0$ and $\alpha \in L^{-1}$.
\end{compactenum}

Given a hemistrict Lie 2-algebra $L$ and an $L$-module $V$, recall that an abelian extension of $L$ by $V$ in the category of hemistrict Lie 2-algebras (resp. in the category of weak Lie 2-algebras) is a short exact sequence of hemistrict Lie 2-algebras (resp. weak Lie 2-algebras)
\[
 0 \rightarrow V \rightarrow E \rightarrow L \rightarrow 0,
\]
such that the sequence splits as graded vector spaces, the brackets on $V$ is trivial and the action of $L$ on $V$ is the prescribed one. Two such extensions $E$ and $E^\prime$ are isomorphic if there exists a morphism of hemistrict Lie 2-algebras (resp. weak Lie 2-algebras) from $E$ to $E^\prime$ which is compatible with the identity on $V$ and on $L$.
Analogous to abelian extensions of Leibniz algebras~\cites{CFV,LP}, we have the following
\begin{lem}\label{Lemma: extension}
   Each second cohomology class $[\omega] \in H^2(L,V)$ of a representation $V$ of the hemistrict Lie 2-algebra $L$ gives rise to an equivalence class of abelian extensions of $L$ by $V$ in the category of weak Lie 2-algebras.
\end{lem}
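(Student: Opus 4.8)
The plan is to build a weak Lie $2$-algebra structure on $L\oplus V$ directly out of a representative cocycle, and then to check that cohomologous cocycles yield equivalent extensions.

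\emph{From a cocycle to an extension.} Fix a $2$-cocycle $\omega=(\omega_1^{(0)},\omega_0^{(0)},\omega_1^{(-1)},\omega_0^{(-1)})$ as in~\eqref{2-cocycle}. On the graded vector space $E^{-1}=L^{-1}\oplus V^{-1}$, $E^0=L^0\oplus V^0$ I would put: the differential \emph{twisted} by $\omega_1^{(0)}$, namely $d_E(\alpha+u)=d\alpha+\omega_1^{(0)}(\alpha)+du$ for $\alpha\in L^{-1}$, $u\in V^{-1}$, so that $0\to V\to E_\omega\to L\to 0$ is a short exact sequence of cochain complexes split as graded vector spaces; the bracket obtained from the semidirect-product bracket $[-,-]+l+r$ of Proposition~\ref{prop: equiv de module} by adding the corrections $\omega_0^{(0)}$ on $L^0\odot L^0$ (valued in $V^0$) and $\omega_1^{(-1)}$ on $L^0\wedge L^{-1}$ (valued in $V^{-1}$); the alternator $h_2+h_v$ of the semidirect product; and the Jacobiator $l_3^{E}\triangleq\omega_0^{(-1)}\colon(L^0)^{\otimes 3}\to V^{-1}$, extended by zero whenever an argument lies in $V$. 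Since $(\vec L,[-,-])$ has vanishing Jacobiator while $l_3^E$ is generically nonzero, this structure genuinely lives in the category of weak, not hemistrict, Lie $2$-algebras; by construction the brackets on $V$ vanish and $L$ acts on $V$ by the given representation.

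\emph{Verifying the axioms.} This is the heart of the matter. I would unwind Roytenberg's coherence axioms~\cite{Roy1} for a $2$-term weak $L_\infty$-algebra on $\vec E$ and match them clause by clause with the data of $\omega$: the weak-symmetry relations~\eqref{Eq: weak symmetry} for $\omega_0^{(0)}$ and $\omega_0^{(-1)}$ become the skew-symmetry of the bracket up to the alternator $h_2+h_v$ and the coherence of that alternator with $l_3^E$, consuming~\eqref{Eq: anticomm up homotopy}--\eqref{Eq2: hemi2} on the $L$ side and~\eqref{Eq: comm uptohomotopy}--\eqref{Eq: landhl} on the $V$ side; the invariance-up-to-homotopy relations~\eqref{Eq1:2-cocycle}--\eqref{Eq2: 2-cocycle} become precisely the statement that the bracket is a cochain map for the \emph{twisted} differential $d_E$ and that $d_E$ differentiates it; and the remaining identities~\eqref{Eq3:2-cocycle}--\eqref{Eq4:2-cocycle} yield the Jacobi identity of the bracket up to $l_3^E$ together with the top coherence law for $l_3^E$ itself. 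The pure-$L$ fragments of every identity are supplied for free by the fact that $L$ is already a hemistrict Lie $2$-algebra.

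\emph{Descent to cohomology.} If $\omega'=\omega+D\eta$ for a $1$-cochain $\eta=(\eta_1,\eta_2)$ with $\eta_1\colon\vec L\to\vec V$ and $\eta_2\colon L^0\otimes L^0\to V^{-1}$, then $\Phi\colon E_\omega\to E_{\omega'}$ defined in each degree by $\Phi(z+v)=z+(\eta_1(z)+v)$, with second component the chain homotopy $\eta_2$, is a morphism of weak Lie $2$-algebras (generically not strict, its $f_2$-datum being $\eta_2$; compare Definition~\ref{Def of weak Lie 2-algebra}) restricting to the identity on $V$ and inducing the identity on $L$; it is invertible, so $E_\omega$ and $E_{\omega'}$ are equivalent abelian extensions. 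Hence $[\omega]\mapsto[E_\omega]$ is well defined. (Conversely, changing the chosen graded splitting of a given abelian extension alters the associated cocycle by a coboundary, which would be needed for the converse statement but not for this lemma.)

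\emph{Main obstacle.} Essentially all the content sits in the axiom check: Roytenberg's list of weak Lie $2$-algebra relations is long, and one must thread the degree-shift $[1]$ sign conventions consistently through the identification of the four components of $\omega$ with the twisted differential, the bracket correction, and the Jacobiator of $E_\omega$. The two delicate points are (i) reconciling the non-split differential $d_E$ with the cochain-map requirement on the bracket—this is exactly what~\eqref{Eq1:2-cocycle}--\eqref{Eq2: 2-cocycle} encode—and (ii) verifying the coherence relation satisfied by $l_3^E=\omega_0^{(-1)}$, which is where the portion of $D\omega=0$ not already used up gets consumed.
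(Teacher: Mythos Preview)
Your approach is essentially the paper's: build a weak Lie $2$-algebra on $\vec L\oplus\vec V$ from the cocycle (twisted differential from $\omega_1^{(0)}$, bracket corrected by $\omega_0^{(0)}$ and $\omega_1^{(-1)}$, alternator from the semidirect product, Jacobiator $\omega_0^{(-1)}$), then check that cohomologous cocycles give equivalent extensions.

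Two cosmetic differences are worth flagging. First, the paper works throughout with the symmetric minimal model $V_s=(\vec V,l,-l,0)$ of Lemma~\ref{Key lemma on rep} rather than the original $(l,r,h_v)$: its bracket on $L\oplus V$ reads $\{(x,v),(y,w)\}=([x,y],\,l_xw-l_yv+\omega_0^{(0)}(x,y))$ and its alternator is $H_2=(h_2,0)$, not $h_2+h_v$. Your choice is equally valid (the two semidirect products are isomorphic via the map in the proof of Lemma~\ref{Key lemma on rep}), but when you go to match the cocycle equations~\eqref{Eq1:2-cocycle}--\eqref{Eq4:2-cocycle} against the weak Lie $2$-algebra axioms you must be consistent about which model you are in; the paper's sign on the twisted differential is $d_\omega(\alpha)=d\alpha-\omega_1^{(0)}(\alpha)$, reflecting its conventions. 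Second, for the passage to cohomology the paper does not build your isomorphism $\Phi\colon E_\omega\to E_{\omega'}$ directly; instead it shows that a splitting $f=(\id\oplus\psi_1,\psi_2)$ of $E_\omega$ exists as a morphism of weak Lie $2$-algebras if and only if $D\psi=\omega$, i.e.\ the extension is trivial precisely when $[\omega]=0$. Your direct construction of $\Phi$ from $\eta$ is the same computation in a different guise, and arguably cleaner for the statement as phrased.
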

\begin{proof}
 Let $\omega = (\omega_0^{(0)},\omega_1^{(0)},\omega_1^{(-1)},\omega_0^{(-1)})$ be a 2-cocycle as in Equation~\eqref{2-cocycle} of the representation $V$. Consider the following binary operation $\{-,-\}$ defined by
 \begin{align*}
  \{(x,v),(y,w)\} &:= ([x,y], l(x,w) - l(y,v) + \omega_0^{(0)}(x,y)), \\
  \{(x,v),(\beta,w)\} &:= ([x,\beta], l(x,w) - l(\beta,v) + \omega_1^{(-1)}(x,\beta)), \\
  \{(\alpha,v), (y,w)\} &:= ([\alpha,y], l(\alpha,w) - l(y,v) + \omega_1^{(-1)}(\alpha,y)), \\
  \{(\alpha,v),(\beta,w)\} &:= (0,l(\alpha,w) - l(\beta,v)),
 \end{align*}
for all $x,y \in L^0, \alpha,\beta\in L^{-1}$ and $v,w \in V^{-1}\oplus V^0$,  on the cochain complex $\overrightarrow{L\oplus V} = (L^\bullet \oplus V^\bullet, d_\omega)$, where the differential $d_\omega$ is specified by
 \begin{align*}
   d_\omega(\alpha) &= d\alpha - \omega_1^{(0)}(\alpha), & d_\omega(u) = du,
 \end{align*}
for all $\alpha \in L^{-1}$ and $u \in V^{-1}$. Using Equations~\eqref{Eq1:2-cocycle} and~\eqref{Eq2: 2-cocycle}, one can easily verify that $\{-,-\}$ is indeed a $d_\omega$-cochain map.

Define a degree $(-1)$ map $H_3\colon (L^\bullet\oplus V^\bullet)^{\otimes 3} \to L^\bullet \oplus V^\bullet$ by
\[
 H_3((x,u),(y,v),(z,w)) := (0,\omega_0^{(-1)}(x,y,z)).
\]
By Equations~\eqref{Eq3:2-cocycle} and~\eqref{Eq4:2-cocycle}, it can also be checked directly that $\{-,-\}$ satisfies the Jacobi identity up to the chain homotopy $H_3$.

Finally, define a degree $(-1)$ bilinear map $H_2$ on $L^\bullet \oplus V^\bullet$ by
\[
 H_2((x,u),(y,w)) := (h_2(x,y),0),\qquad \forall x,y \in L^0, u,w \in V^\bullet.
\]
It is clear that
\begin{align*}
 \{(x,u),(y,w)\} + \{(y,w),(x,u)\} &= d_\omega(H_2((x,u),(y,w))), \\
 \{(x,u),(\beta,w)\} + \{(\beta,w),(x,u)\} &= H_2((x,u),d_\omega(\beta,w)),\\
 \{(\alpha,u),(y,w)\} + \{(y,w),(\alpha,u)\} &= H_2(d_\omega(\alpha,u),(y,w)).
\end{align*}
Hence, $L \oplus V = (L^\bullet \oplus V^\bullet,d_\omega,\{-,-\},H_2,H_3)$ is a weak Lie 2-algebra. We obtain an extension of $L$ by $V$
\begin{equation}\label{SES of hLie2}
 0 \to V \xrightarrow{i = (i_1,i_2)} L \oplus V \xrightarrow{\pr = (\pr_1,\pr_2)} L \to 0,
\end{equation}
where $i_1(v) = (0,v),\pr_1(x,v) = x$ for all $v \in V^\bullet$ and $x \in L^\bullet$, and both chain homotopy $i_2$ and $\pr_2$ vanish.

Furthermore, any splitting $f=(f_1,f_2)$ of short exact sequence~\eqref{SES of hLie2} of weak Lie 2-algebras, if exists, is of the form
\begin{align*}
  f_1 &= \id \oplus \psi_1: L \rightarrow L \oplus V, & f_2 &= (0,\psi_2): L \otimes L \rightarrow L \oplus V,
\end{align*}
where $\psi = (\psi_1,\psi_2) \in C^1(L,V)$ is a 1-cochain. Moreover, it follows from straightforward computations that $f$ is indeed a splitting, i.e., $f$ is a morphism of weak Lie 2-algebras, if and only if $D(\psi) = \omega$.
This completes the proof.
\end{proof}

\begin{Rem}
  When the alternator $h_2$ of $L$ vanishes, $L$ becomes a strict Lie 2-algebra and the alternator on $L \oplus V$ also vanishes. As a consequence, we rediscover the cohomological description of abelian extensions of strict Lie 2-algebras in~\cite{LSZ}.
\end{Rem}
Consider the subset $\widetilde{H}^2(L,V) \subset H^2(L,V)$ consisting of cohomology classes $[\omega]$, which have a representative $\omega = (\omega_1^{(0)},\omega_0^{(0)},\omega_1^{(-1)},\omega_0^{(-1)})$ such that $\omega_0^{(-1)} = 0$. By the argument in the proof of Lemma~\ref{Lemma: extension}, each element in $\widetilde{H}^2(L,V)$ gives rise to an extension class of $L$ by $V$ in the category of hemistrict Lie 2-algebras. Conversely, it is easy to see that each extension arises in this way.
In summary, we have
\begin{prop}\label{prop: h2}
  The subset $\widetilde{H}^2(L,V)$ is isomorphic to the set of abelian extension classes of $L$ by $V$ in the category of hemistrict Lie 2-algebras.
\end{prop}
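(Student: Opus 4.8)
The plan is to exhibit the claimed isomorphism as a pair of mutually inverse maps, leaning heavily on the construction already carried out in the proof of Lemma~\ref{Lemma: extension}. In one direction, starting from a class $[\omega]\in\widetilde H^2(L,V)$ I would pick a representative $\omega=(\omega_1^{(0)},\omega_0^{(0)},\omega_1^{(-1)},\omega_0^{(-1)})$ with $\omega_0^{(-1)}=0$ and feed it into that construction. The only place $\omega_0^{(-1)}$ entered was through the Jacobiator $H_3((x,u),(y,v),(z,w))=(0,\omega_0^{(-1)}(x,y,z))$, so for such a representative $H_3=0$: the twisted differential $d_\omega$ together with the bracket $\{-,-\}$ make $\overrightarrow{L\oplus V}$ an honest $2$-term dg Leibniz algebra, and $H_2((x,u),(y,w))=(h_2(x,y),0)$ still controls skew-symmetry. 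It then remains to check the two extra hemistrict axioms~\eqref{Eq1: hemi2} and~\eqref{Eq2: hemi2} for $(\overrightarrow{L\oplus V},\{-,-\},H_2)$; here the key observation is that the weak symmetry property~\eqref{Eq: weak symmetry} applied to the vanishing component $\omega_0^{(-1)}$ forces $\omega_1^{(-1)}(z\mid h_2(x,y))=0$ for all $x,y,z\in L^0$, which together with the compatibility equations~\eqref{Eq: compatible equs} of the representation and the axioms~\eqref{Eq1: hemi2}--\eqref{Eq2: hemi2} of $L$ itself yields the corresponding axioms for $\overrightarrow{L\oplus V}$. Thus $L\oplus_\omega V$ is a genuine hemistrict Lie $2$-algebra and~\eqref{SES of hLie2} an abelian extension in the category of hemistrict Lie $2$-algebras. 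To see this does not depend on the chosen representative I would check, exactly as at the end of the proof of Lemma~\ref{Lemma: extension}, that replacing $\omega$ by $\omega+D\psi$ with $\psi\in C^1(L,V)$ and $(D\psi)_0^{(-1)}=0$ gives an isomorphic extension, via the morphism $(x,v)\mapsto(x,v+\psi_1(x))$ with $2$-component $\psi_2$; since both extensions are hemistrict this is automatically a morphism of hemistrict Lie $2$-algebras compatible with the identities on $V$ and $L$.

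For the converse I would start from an arbitrary abelian extension $0\to V\xrightarrow{i}E\xrightarrow{\pr}L\to0$ in the category of hemistrict Lie $2$-algebras, choose a splitting $s$ of the underlying sequences of graded vector spaces, and transport the structure of $E$ to $\overrightarrow{L\oplus V}$ along the resulting identification. Because $i$ and $\pr$ are strict morphisms (their $2$-components vanish) and the bracket and alternator of $E$ restrict trivially to $V$, the transported differential is of the form $d_\omega$ for a unique $\omega_1^{(0)}\colon L^{-1}\to V^0$, the transported bracket is of the form $\{-,-\}$ for unique $\omega_0^{(0)},\omega_1^{(-1)}$, and --- after a further adjustment of $s$ absorbing the remaining $V$-valued part of the alternator, as in the treatment of abelian extensions of Leibniz algebras~\cites{CFV,LP} --- the transported alternator is $(h_2,0)$. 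The hemistrict Lie $2$-algebra axioms for this transported structure then unwind precisely into the $2$-cocycle equations~\eqref{Eq1:2-cocycle}--\eqref{Eq4:2-cocycle} together with the weak symmetry~\eqref{Eq: weak symmetry}; crucially, since $E$ is a strict $2$-term dg Leibniz algebra it carries no Jacobiator, which forces the ternary component of the extracted $\omega$ to vanish, so $\omega=(\omega_1^{(0)},\omega_0^{(0)},\omega_1^{(-1)},0)$ and $[\omega]\in\widetilde H^2(L,V)$. Finally I would verify that a different choice of $s$, or an isomorphism of extensions, alters $\omega$ only by a coboundary $D\psi$ (again with vanishing ternary part), so that $[E]\mapsto[\omega]$ is well defined, and that the two maps are mutually inverse: both composites are the identity essentially by construction, since applying the construction of Lemma~\ref{Lemma: extension} to the cocycle extracted from $E$ recovers $E$ up to the chosen splitting, and extracting the cocycle from $L\oplus_\omega V$ via the tautological splitting returns $\omega$.

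The main obstacle is the second paragraph: the careful bookkeeping needed to show that the structure transported from an \emph{arbitrary} hemistrict extension can genuinely be brought into the normal form produced by the construction of Lemma~\ref{Lemma: extension} --- in particular that the non-$h_2$ part of the alternator can be gauged away by re-choosing the section --- and that the hemistrict Lie $2$-algebra axioms for $\overrightarrow{L\oplus V}$ in that normal form decompose, term by term, into the $2$-cocycle conditions and the weak symmetry property. This is a lengthy but routine diagram chase of the same flavor as the verification already performed for weak extensions in the proof of Lemma~\ref{Lemma: extension}, and I expect it to require no new idea beyond those used there; in particular the cohomological interpretation of equivalence of extensions is purely formal once the normal form is established.
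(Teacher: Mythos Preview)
Your proposal is correct and follows essentially the same approach as the paper, which in fact gives no formal proof at all: the text preceding the proposition simply remarks that, by the construction in Lemma~\ref{Lemma: extension}, a representative with $\omega_0^{(-1)}=0$ yields a hemistrict (rather than merely weak) extension, and asserts that ``it is easy to see that each extension arises in this way.'' Your outline fills in exactly the details the paper leaves implicit, including the correct observation that the vanishing of $\omega_0^{(-1)}$ kills the Jacobiator $H_3$ and, via the weak symmetry~\eqref{Eq: weak symmetry}, forces $\omega_1^{(-1)}(-\mid h_2(x,y))=0$, which is what is needed to verify~\eqref{Eq1: hemi2}--\eqref{Eq2: hemi2} for $L\oplus_\omega V$.
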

\begin{Rem}
  It is natural to consider non-abelian extensions of hemistrict Lie 2-algebras. In~\cite{SZ}, Sheng and Zhu interpreted non-abelian extensions of Lie algebras as morphisms of Lie 2-algebras. Recently, Liu, Sheng and Wang~\cite{LSW} studied non-abelian extensions of Leibniz algebras by morphisms of Leibniz 2-algebras. Thus, it is expected that non-abelian extensions of hemistrict Lie 2-algebras would be described by morphisms of hemistrict Lie 3-algebras. We will investigate this problem somewhere else.
\end{Rem}

\subsection{Functoriality}
In this section, we prove that the construction of standard complexes of hemistrict Lie 2-algebras is functorial.
First of all, we fix a hemistrict Lie 2-algebra $L = (\vec{L},[-,-],h_2)$ and prove that the assignment of standard complexes to representations of $L$ is functorial.
More precisely, we prove the following
\begin{prop}\label{prop: functoriality1}
Assume that $\phi = (\phi_1,\phi_l,\phi_r)\colon V \to V^\prime$ is a morphism of representations of $L$. Then there associates a cochain map
\[
   \phi_\ast\colon \oplus_nC^n(L,V) \longrightarrow \oplus_nC^n(L,V^\prime),
\]
defined by
\begin{align}\label{Eq: phiast0}
  (\phi_\ast\omega)_k^{(0)}(x_1,\cdots,x_{n-2k}&\mid\alpha_1,\cdots,\alpha_k) = \phi_1(\omega_k^{(0)}(x_1,\cdots,x_{n-2k}\mid\alpha_1,\cdots,\alpha_k)), \\
  (\phi_\ast\omega)_k^{(-1)}(x_1,\cdots,x_{n-2k+1}&\mid\alpha_1,\cdots,\alpha_k) = \phi_1(\omega_k^{(-1)}(x_1,\cdots,x_{n-2k+1}\mid\alpha_1,\cdots,\alpha_k)) \notag \\
   &\quad -\sum_{i=1}^{n-2k+1}(-1)^{n+i}\phi_l(x_i,\omega_k^{(0)}(x_1,\cdots, \widehat{x_i},\cdots,x_{n-2k+1}\mid\alpha_1,\cdots,\alpha_k)), \label{Eq: phiast-1}
\end{align}
for all $\omega \in C^n(L,V)$, and all $x_1,\cdots,x_{n-2k+1} \in L^0, \alpha_1,\cdots,\alpha_k \in L^{-1}$.
\end{prop}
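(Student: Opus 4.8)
The plan is to prove two things: that the formulas~\eqref{Eq: phiast0}--\eqref{Eq: phiast-1} send $C^n(L,V)$ into $C^n(L,V^\prime)$, and that the resulting map intertwines the differentials of Lemma~\ref{Lemma: differential}, $D^\prime\circ\phi_\ast=\phi_\ast\circ D$. A preliminary simplification: the map $\phi_\ast$ depends only on the cochain map $\phi_1$ and the homotopy $\phi_l$, never on $\phi_r$; and by Lemma~\ref{Key lemma on rep} we may replace $V$ and $V^\prime$ by their symmetric models $V_s=(\vec V,l,-l,0)$ and $V^\prime_s=(\vec V^\prime,l^\prime,-l^\prime,0)$, under which $\phi$ corresponds to a morphism with the same $\phi_1$ and $\phi_l$, while the standard complexes and the formula for $\phi_\ast$ are unchanged. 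So I would assume throughout that $h_v=h_v^\prime=0$, $r=-l$, $r^\prime=-l^\prime$, which removes many sign cases, exactly as in the proof of Lemma~\ref{Lemma: differential}.

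For well-definedness, decompose $(\phi_\ast\omega)^{(-1)}_k=\phi_1\circ\omega^{(-1)}_k+\Theta_k$, where $\Theta_k$ denotes the correction sum in~\eqref{Eq: phiast-1}. Since the alternator $h_2$ is a datum of $L$ and is untouched by $\phi$, applying the linear map $\phi_1$ to~\eqref{Eq: weak symmetry} shows that $\phi_1\circ\omega^{(p)}_k$ again satisfies~\eqref{Eq: weak symmetry}. For $\Theta_k$ one transposes $x_s$ with $x_{s+1}$ and adds: the summands with index $i\notin\{s,s+1\}$ recombine through the weak symmetry of $\omega^{(0)}_k$ into $-\Theta_{k+1}(\dots,\widehat{x_s},\widehat{x_{s+1}},\dots\mid h_2(x_s,x_{s+1}),-)$, while the $i=s$ and $i=s+1$ summands cancel one another after relabeling, because the sign $(-1)^{n+i}$ alternates and $\phi_l$ is bilinear. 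This is routine Koszul-sign bookkeeping.

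For the chain-map property I would compare $D^\prime\phi_\ast\omega$ with $\phi_\ast D\omega$ in the $(0)$- and $(-1)$-components indexed by $k$. On the $(0)$-level the identity reduces to: $\phi_1$ is a cochain map; the Leibniz-bracket terms $[-,-]$ and $[\alpha_j,x_i]$ act only on the $L$-arguments and hence commute with $\phi_1$; and $\phi_1$ is compatible with the left action $l$ only up to $\phi_l$, by the left-action relation in~\eqref{Eq: flandl}. Every $d(\phi_l)$ thereby produced is matched by a term fed out of $\Theta$ by $\delta^\prime$. On the $(-1)$-level the subtlety is the mixing of $V^0$- and $V^{-1}$-data: the term $(-1)^{n+1}d\bigl(\omega^{(-1)}_k\bigr)$ in $\delta\omega$ and the term $(-1)^n\sum_j l_{\alpha_j}\omega^{(0)}_{k-1}$ in $d_{\LP}\omega$ both move $(0)$-data into the $(-1)$-component, and one verifies that $\phi_1$ applied there, together with $D^\prime$ applied to $\Theta_k$, reproduces precisely the correction term $\Theta$ of $D\omega$. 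Again only~\eqref{Eq: flandl} is needed, together with Lemma~\ref{Lemma V}, which guarantees that $(\vec V,l,r)$ and $(\vec V^\prime,l^\prime,r^\prime)$ are genuine $(\vec L,[-,-])$-modules, so that the Loday--Pirashvili differential is available and natural up to the homotopy $\phi_l$.

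The main difficulty is not conceptual but combinatorial. Because $\phi$ respects the module structures only up to the homotopies, the induced map on standard complexes must carry the correction term $\Theta$, and the crux is to check that the entire family of $\phi_l$-terms generated by $D^\prime\circ(\phi_1\circ-)$ cancels against $D^\prime\circ\Theta$, and dually against $\phi_\ast$ applied to the $\delta$- and $d_{\LP}$-parts of $D$, uniformly over all index positions. With the reduction to symmetric models in force this is a finite sign analysis; I would spell it out for the $(0)$-components and refer to~\cite{Cai} for the entirely parallel verification of the $(-1)$-components.
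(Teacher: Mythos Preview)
Your proposal is correct and follows essentially the same route as the paper: verify the weak symmetry~\eqref{Eq: weak symmetry} for $(\phi_\ast\omega)^{(p)}_k$ directly, then check $D^\prime\phi_\ast=\phi_\ast D$ on the $(0)$-components by expanding via Lemma~\ref{Lemma: differential} and invoking~\eqref{Eq: flandl}, deferring the $(-1)$-components to a parallel computation. One minor remark: your preliminary reduction to symmetric models via Lemma~\ref{Key lemma on rep} is superfluous, since by construction the standard complex and its differential already depend only on $(\vec V,l)$ (cf.\ the paragraph preceding Lemma~\ref{Lemma: differential}); the paper accordingly omits this step and works directly with the given data.
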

\begin{proof}
 It follows from straightforward verifications that both $(\phi_\ast\omega)_k^{(0)}$ and $(\phi_\ast\omega)_k^{(-1)}$ satisfy the weak symmetry property~\eqref{Eq: weak symmetry}. Thus $\phi_\ast$ is well-defined.

 Now we show that $\phi_\ast$ is a cochain map. For each $\omega \in C^n(L,V)$, we compute
 \begin{align*}
  &\quad (\phi_\ast D\omega)_k^{(0)}(x_1,\cdots,x_{n-2k+1}\mid\alpha_1,\cdots, \alpha_k) =  \phi_1((D\omega)_k^{(0)}(x_1,\cdots,x_{n-2k+1}\mid\alpha_1,\cdots,\alpha_k)) \\
  &= \sum_{j=1}^k\phi_1(\omega_{k-1}^{(0)}(d\alpha_j,x_1,\cdots,x_{n-2k+1}\mid \cdots, \widehat{\alpha_j},\cdots)) + (-1)^{n+1}\phi_1(d\omega_k^{(-1)}(x_1,\cdots,x_{n-2k+1}\mid \alpha_1,\cdots,\alpha_k)) \\
  &\quad+ \sum_{i=1}^{n-2k+1} (-1)^{i-1}\phi_1(l(x_i,\omega^{(0)}_k(\cdots, \widehat{x_i},\cdots \mid \cdots))) + \sum_{i<j}(-1)^i \phi_1(\omega_{k}^{(0)}( \widehat{x_i},\cdots,\widehat{x_j},[x_i,x_j] \mid \cdots))\\
  &\quad + \sum_{i=1}^{n-2k+1}\sum_{j=1}^k(-1)^{i+1} \phi_1(\omega^{(0)}_k(\cdots,\widehat{x_i},\cdots \mid \cdots,[\alpha_j,x_i],\cdots)), \qquad \text{by Equations~\eqref{Eq: phiast0},\eqref{Eq: phiast-1}}\\
  &= \sum_{j=1}^k (\phi_\ast\omega)_{k-1}^{(0)}(d\alpha_j,x_1,\cdots,x_{n-2k+1} \mid \cdots, \widehat{\alpha_j},\cdots) + (-1)^{n+1}d(\phi_\ast\omega)_k^{(-1)}(x_1,\cdots,x_{n-2k+1}\mid \alpha_1,\cdots,\alpha_k) \\
  &\quad+ \sum_{i=1}^{n-2k+1} (-1)^{i-1} (\phi_1(l(x_i,\omega^{(0)}_k(\cdots, \widehat{x_i},\cdots \mid \cdots))) + d\phi_l(x_i,\omega^{(0)}_k(\cdots, \widehat{x_i},\cdots \mid \cdots)))\\
  &\quad + \sum_{i<j}(-1)^i (\phi_\ast\omega)_{k}^{(0)}( \widehat{x_i},\cdots,\widehat{x_j},[x_i,x_j] \mid \cdots) \\
  &\quad + \sum_{i=1}^{n-2k+1}\sum_{j=1}^k(-1)^{i+1} (\phi_\ast\omega)^{(0)}_k(\cdots,\widehat{x_i},\cdots \mid \cdots,[\alpha_j,x_i],\cdots) \qquad  \text{by Equation~\eqref{Eq: flandl}}\\
  &= (D\phi_\ast\omega)_k^{(0)}(x_1,\cdots,x_{n-2k+1} \mid \alpha_1,\cdots, \alpha_k),
 \end{align*}
for all $x_1,\cdots,x_{n-2k+1} \in L^0, \alpha_1,\cdots,\alpha_k \in L^{-1}$. Similarly, one can compute directly that
\[
 (\phi_\ast D\omega)_k^{(-1)}(x_1,\cdots,x_{n-2k+2}\mid \alpha_1,\cdots,\alpha_k) = (D\phi_\ast\omega)_k^{(-1)}(x_1,\cdots,x_{n-2k+2}\mid \alpha_1,\cdots,\alpha_k).
\]
\end{proof}

Next, we prove that the construction of standard complexes is functorial with respect to morphisms of hemistrict Lie 2-algebras.
More precisely, we have the following
\begin{prop}\label{prop: pullback}
  Let $f=(f_1,f_2): L^\prime \to L$ be a morphism of hemistrict Lie 2-algebras and $V = (\vec{V},l,r)$ an $f$-compatible representation of $L$. Denote by $V^\prime = (\vec{V},l^\prime,r^\prime)$ the pullback representation of $L^\prime$ on $\vec{V}$. Then $f$ induces a morphism of standard complexes
  \[
   f^\ast: \oplus_nC^n(L,V) \rightarrow \oplus_n C^n(L^\prime,V^\prime),
  \]
  defined by for each $\omega \in C^n(L,V)$,
  \begin{align*}
  &\quad (f^\ast\omega)_k^{(p)}(x^\prime_1,\cdots,x^\prime_{n-2k-p} \mid \alpha^\prime_1,\cdots, \alpha^\prime_j) \\
  &= \omega_k^{(p)}(f_1(x^\prime_1),\cdots,f_1(x^\prime_{n-2k-p})\mid f_1(\alpha^\prime_1),\cdots,f_1(\alpha_k^\prime)) \\
  &\quad - \sum_{q=1}^{\lfloor\frac{n-p}{2}\rfloor - k} \sum_{\substack{i_1<j_1;\cdots;i_q<j_q \\ i_1<\cdots<i_q}} \sgn(i_1<j_1;\cdots;i_q<j_q) \omega^{(p)}_{k+q}(\widehat{f_1(x^\prime_{i_1})},\cdots, \widehat{f_1(x^\prime_{j_1})},\cdots,\widehat{f_1(x^\prime_{i_q})},\cdots, \widehat{f_1(x^\prime_{j_q})} \\
  &\qquad\qquad\qquad\qquad\qquad\qquad\qquad\qquad\qquad\qquad \mid f_2(x^\prime_{i_1},x^\prime_{j_1}),\cdots, f_2(x^\prime_{i_q}, x^\prime_{j_q}), f_1(\alpha^\prime_1), \cdots, f_1(\alpha_k^\prime)),
\end{align*}
for all $x^\prime_1,\cdots,x^\prime_{n-2k-p} \in L^{\prime 0}, \alpha^\prime_1, \cdots,\alpha^\prime_k \in L^{\prime -1}$, where
\begin{equation}\label{Eq: sgn}
\sgn(i_1<j_1;\cdots;i_q<j_q) \triangleq (-1)^{\sum_{a=1}^q(i_{a}+j_{a}) + \sum_{a<b}\operatorname{int}((i_{a},j_{a}),(i_{b},j_{b}))},
\end{equation}
and $\operatorname{int}((i_{a},j_{a}),(i_{b},j_{b}))$ is the mod 2 intersection number of the two pairs $(i_{a},j_{a})$ and $(i_{b},j_{b})$, which either equals $1$ if $i_{a}<i_{b}<j_{a}<j_{b}$, or vanishes otherwise.
\end{prop}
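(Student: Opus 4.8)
The statement packages two claims: that $f^\ast$ takes values in $C^\bullet(L^\prime,V^\prime)$, i.e.\ that each $(f^\ast\omega)^{(p)}_k$ satisfies the weak symmetry~\eqref{Eq: weak symmetry}, and that $f^\ast D = D f^\ast$. As in the statement we work throughout with the symmetric representations ($r=-l$, $r^\prime=-l^\prime$), which by Lemma~\ref{Key lemma on rep} entails no loss of generality, so that the differential~\eqref{Eq: Differential} involves only the left actions. The whole computation runs parallel to the proof of Proposition~\ref{prop: functoriality1}; the genuinely new input is the family of $f_2$-correction terms, whose role is to repair the two defects of $f_1$: it does not respect the symmetrized-bracket relations $R$ defining $U(L[1])$ (because $h_2(f_1(-),f_1(-))\neq f_1(h_2^\prime(-,-))$) and it does not respect the Leibniz bracket on the nose (Equation~\eqref{Eq: f1uptof2}).

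\emph{Well-definedness.} Transposing two adjacent $L^0$-arguments $x'_s\leftrightarrow x'_{s+1}$ in the leading term $\omega^{(p)}_k(f_1(x'_1),\dots\mid\cdots)$ and using the weak symmetry of $\omega$ produces $-\omega^{(p)}_{k+1}(\dots,\widehat{f_1(x'_s)},\widehat{f_1(x'_{s+1})},\dots\mid h_2(f_1(x'_s),f_1(x'_{s+1})),\cdots)$. By~\eqref{Eq: h2andf2},
\[
 h_2(f_1(x'_s),f_1(x'_{s+1})) = f_1(h_2^\prime(x'_s,x'_{s+1})) + f_2(x'_s,x'_{s+1}) + f_2(x'_{s+1},x'_s);
\]
the first summand is exactly the leading term of the right-hand side of~\eqref{Eq: weak symmetry} for $f^\ast\omega$, while the two $f_2$-summands, combined with the effect of the transposition on the $f_2$-correction terms already present in $f^\ast\omega$, recombine into the correction sum of $(f^\ast\omega)^{(p)}_{k+1}$. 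The only thing to verify here is that the signs $\sgn$ in~\eqref{Eq: sgn} transform correctly when one adjoins the pair $(s,s+1)$ to a matching or transposes two indices carried by existing pairs, which is an elementary property of~\eqref{Eq: sgn}.

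\emph{Cochain map.} I would expand $(Df^\ast\omega)^{(p)}_k$ and $(f^\ast D\omega)^{(p)}_k$ into the four blocks $\delta^{(0)},\delta^{(-1)},d_{\LP}^{(0)},d_{\LP}^{(-1)}$ of Lemma~\ref{Lemma: differential} and match like terms. Three mechanisms do the reconciliation: (i) wherever $D$ on the $L^\prime$-side inserts $[x'_i,x'_j]^\prime$, Equation~\eqref{Eq: f1uptof2} gives $f_1([x'_i,x'_j]^\prime)=[f_1(x'_i),f_1(x'_j)]-d(f_2)(x'_i,x'_j)$, the first term matching the bracket term of $D$ on the $L$-side of $f^\ast\omega$ and the second being cancelled by the internal differential $\delta$ applied to the correction slot carrying $f_2(x'_i,x'_j)$; (ii) the action terms agree because $l^\prime_{x'}=l_{f_1(x')}$, while every term in which an action hits an argument of the shape $f_2(x'_i,x'_j)$ vanishes by the hypothesis that $V$ is $f$-compatible, $l_{f_2(x'_i,x'_j)}=0$ (and hence $r_{f_2(x'_i,x'_j)}=0$), which is precisely where that hypothesis is used; (iii) when two bracket insertions, or an insertion and an already-present $f_2$-pair, overlap in the index set, the mismatch is exactly~\eqref{Eq: Jacobiandf2}, which trades $f_2([x',y']^\prime,z')+f_2(y',[x',z']^\prime)-f_2(x',[y',z']^\prime)$ for $[f_1(x'),f_2(y',z')]-[f_1(y'),f_2(x',z')]-[f_2(x',y'),f_1(z')]$, whose terms again vanish on the relevant $V$-slot by $f$-compatibility, leaving a reshuffling of arguments among $f_2$-pairs that is recorded by the mod-$2$ intersection number $\operatorname{int}$ in~\eqref{Eq: sgn}. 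Terms of the form $[f_2(x',y'),z]$ that arise when the Loday--Pirashvili differential moves an $f_2$-entry through the $L^0$-arguments are handled exactly as in Lemma~\ref{Lemma: differential}, rewriting them via~\eqref{Eq1: hemi2},~\eqref{Eq: h2andf2} and the weak symmetry already established.

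The main obstacle is the combinatorics of the sign $\sgn(i_1<j_1;\cdots;i_q<j_q)$. I would first prove three lemmas on its behaviour: under an adjacent transposition of the underlying $L^0$-arguments; under deletion of one pair together with the reindexing of the rest; and under the ``Jacobi exchange'' coming from~\eqref{Eq: Jacobiandf2}, which replaces a pair $(i,j)$ and a relation involving a third index by the two matchings obtained after applying~\eqref{Eq: Jacobiandf2}, the change in $\sgn$ being precisely the intersection-number factor in~\eqref{Eq: sgn}. With these in hand the identity $Df^\ast\omega=f^\ast D\omega$ reduces to a lengthy but entirely mechanical comparison of like terms using only~\eqref{Eq: anticomm up homotopy},~\eqref{Eq1: hemi2},~\eqref{Eq2: hemi2},~\eqref{Eq: f1uptof2},~\eqref{Eq: h2andf2},~\eqref{Eq: Jacobiandf2}, the defining identities of the pullback representation $V^\prime$, and the $f$-compatibility of $V$; the $(p)=0$ and $(p)=-1$ computations are structurally identical, the only extra term in the $(p)=-1$ block, $(-1)^n\sum_j l_{\alpha_j}\omega^{(0)}_{k-1}(\cdots)$, being matched after commuting $f^\ast$ past $l_{f_1(\alpha'_j)}=l^\prime_{\alpha'_j}$.
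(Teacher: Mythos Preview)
Your proposal is correct and follows the same approach as the paper, which itself omits the full computation and only illustrates the argument by listing the equations \eqref{Eq: f2}--\eqref{Eq: pullback rep} and carrying out sample verifications in degrees $1$ and $2$. One minor imprecision: in your step (iii) the bracket terms $[f_1(x'),f_2(y',z')]$, $[f_2(x',y'),f_1(z')]$ arising from~\eqref{Eq: Jacobiandf2} are elements of $L^{-1}$ sitting in the $\alpha$-slots of $\omega^{(p)}_{k+q}$, so they do not ``vanish by $f$-compatibility'' (which concerns $l_{f_2(\cdot,\cdot)}\in\End(\vec V)$) but rather match the $[\alpha_j,x_i]$-terms of $d_{\LP}$ on the other side, exactly as in the paper's $n=2$ verification.
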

As a consequence, when $V$ is the trivial representation of $L$ on the base field $\k$, we have
\begin{thm}\label{Thm: Functoriality}
  The assignment $L \to C^\bullet(L,\k), f \mapsto f^\ast$ is a contravariant functor from the category of hemistrict Lie 2-algebras to the category of cochain complexes.
\end{thm}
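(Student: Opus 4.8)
The theorem asks only for the two functor axioms, since Proposition~\ref{prop: pullback}, applied to the trivial representation $\k$ (which is $f$-compatible for every $f$, its left action being zero), already provides for each morphism $f\colon L^\prime \to L$ a well-defined cochain map $f^\ast\colon C^\bullet(L,\k) \to C^\bullet(L^\prime,\k)$, the pullback of $\k$ being again $\k$. The object assignment is $L \mapsto (C^\bullet(L,\k),D)$. So the plan is to verify (i) $(\id_L)^\ast = \id$ and (ii) $(g\circ f)^\ast = f^\ast\circ g^\ast$ for composable morphisms $f\colon L^{\prime\prime}\to L^\prime$, $g\colon L^\prime \to L$.

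Axiom (i) is immediate: for $f=\id_L$ one has $f_1=\id$ and $f_2=0$, so every summand of the correction term in the formula of Proposition~\ref{prop: pullback} for $f^\ast$ contains a vanishing factor $f_2(x^\prime_{i_a},x^\prime_{j_a})$, leaving $(f^\ast\omega)^{(p)}_k=\omega^{(p)}_k$.

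For (ii) I would compare the two expansions term by term. On the left, $(g\circ f)^\ast\omega$ is the formula of Proposition~\ref{prop: pullback} with $(g\circ f)_1=g_1\circ f_1$ and $(g\circ f)_2(x^{\prime\prime},y^{\prime\prime})=g_2(f_1(x^{\prime\prime}),f_1(y^{\prime\prime}))+g_1(f_2(x^{\prime\prime},y^{\prime\prime}))$; expanding each $(g\circ f)_2$-value by multilinearity of $\omega^{(p)}_{k+q}$ in its symmetric slots rewrites $(g\circ f)^\ast\omega$ as a sum over pairs $(P_f,P_g)$, where $P_f,P_g$ are disjoint systems of index-pairs, $P_f$ recording the pairs filled with $g_1f_2$-values and $P_g$ those filled with $g_2(f_1\otimes f_1)$-values. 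On the right, I would first form $g^\ast\omega$ and then apply $f^\ast$: the outer $f^\ast$ creates a system of index-pairs carrying $f_2$-values, which (after the $g_1$ inside $g^\ast\omega$ is applied to them) become exactly the $P_f$-slots, while the surviving $f_1(x^{\prime\prime})$-entries are fed into $g^\ast\omega$, whose own formula pairs some of them off with $g_2$-values (the $P_g$-slots) and applies $g_1$ to the rest. This yields the very same indexing set of disjoint pair-systems and the very same $\omega$-arguments — the symmetric slots may be reordered at no sign cost, and the surviving tensor slots $g_1f_1(x^{\prime\prime}_c)$ occur in the same order on both sides — so (ii) reduces to checking that the accompanying signs agree.

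The sign reconciliation is the only real obstacle. The left-hand term carries a single factor $\sgn$ of~\eqref{Eq: sgn} for the combined system $P_f\sqcup P_g$ on $\{1,\dots,m\}$, whereas the right-hand term carries a product of two such factors: one for $P_f$ on $\{1,\dots,m\}$, from the outer $f^\ast$, and one for $P_g$ from the inner $g^\ast$, but computed relative to the order-preserving relabelling $\iota$ of the $P_f$-unpaired positions onto an initial segment of $\N$. I would isolate the identity $\sgn(P_f)\cdot\sgn(\iota(P_g))=\sgn(P_f\sqcup P_g)$ as a purely combinatorial lemma; granting it, matching the two expansions finishes the proof. The lemma itself reduces, after noting that $\iota$ preserves the mutual intersection numbers among $P_g$-pairs and shifts the parity of each $P_g$-endpoint $p$ by $\#\{P_f\text{-endpoints}<p\}$, to the elementary congruence that for a pair $\{p_1<p_2\}$ of $P_g$ and a pair $\{q_1<q_2\}$ of $P_f$ one has $\#\{q_i<p_1\}+\#\{q_i<p_2\}\equiv\operatorname{int}(\{p_1,p_2\},\{q_1,q_2\})\pmod2$, which holds since the left side equals $\#\{q_i\in(p_1,p_2)\}\bmod 2$ and this mod-$2$ intersection number is $1$ precisely when that count is odd.
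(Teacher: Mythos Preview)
Your proof is correct and, in fact, more thorough than what the paper offers. The paper simply states the theorem ``as a consequence'' of Proposition~\ref{prop: pullback} and then devotes its effort to that proposition (well-definedness of $f^\ast$ and the cochain-map property), never returning to verify the identity and composition axioms explicitly. You supply exactly what is missing: the identity axiom is indeed trivial since $f_2=0$, and for the composition axiom you correctly reduce the comparison of $(g\circ f)^\ast$ with $f^\ast\circ g^\ast$ to the combinatorial sign identity $\sgn(P_f)\cdot\sgn(\iota(P_g))=\sgn(P_f\sqcup P_g)$, whose proof via the parity shift under $\iota$ and the case check on interval linking is sound. So your argument is not a different route so much as a completion of a step the paper leaves implicit.
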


To prove Proposition~\ref{prop: pullback}, one needs, on the one hand, to verify that $f^\ast$ is well defined, i.e., for any $\omega \in C^n(L,V)$, $f^\ast\omega$ satisfies the following weak symmetry property:
\begin{align}\label{Eq: weak symm for fast}
  &\quad (f^\ast\omega)_k^{(p)}(\cdots,x^\prime_i,x^\prime_{i+1},\cdots \mid \alpha^\prime_1,\cdots, \alpha^\prime_j) + (f^\ast\omega)_k^{(p)}(\cdots,x^\prime_{i+1},x^\prime_i,\cdots \mid \alpha^\prime_1,\cdots, \alpha^\prime_j)  \notag \\
  &= -(f^\ast\omega)_{k+1}^{(p)}(\cdots,\widehat{x^\prime_i}, \widehat{x^\prime_{i+1}},\cdots \mid h^\prime_2(x^\prime_{i},x^\prime_{i+1}),  \alpha^\prime_1,\cdots, \alpha^\prime_j),
\end{align}
and on the other hand, to prove that $f^\ast$ is a cochain map, i.e.,
\begin{equation}\label{Eq: fastcochain}
(Df^\ast\omega)_k^{(p)}(x_1,\cdots,x_{n-2k-p+1}\mid\alpha_1,\cdots,\alpha_k) = (f^\ast D\omega)_k^{(p)}(x_1,\cdots,x_{n-2k-p+1}\mid\alpha_1,\cdots,\alpha_k).
\end{equation}
In fact, both Equation~\eqref{Eq: weak symm for fast} and Equation~\eqref{Eq: fastcochain} follow from a straightforward but tedious calculation by using the following equations
\begin{align}\label{Eq: f2}
    h_2(f_1(x_1^\prime),f_1(x_{2}^\prime)) - f_1(h_2^\prime(x_1^\prime, x^\prime_{2})) &= f_2(x_1^\prime, x_{2}^\prime) + f_2(x_{2}^\prime, x_{1}^\prime), \\
    [f_1(x_1^\prime),f_1(x_2^\prime)] - f_1([x_1^\prime,x_2^\prime]^\prime) &= d(f_2)(x_1^\prime,x_2^\prime); \label{Eq: f2ischainhomotopy}\\
  [f_1(x_1^\prime),f_2(x_2^\prime,x_3^\prime)] - [f_1(x_2^\prime), f_2(x_1^\prime,x_3^\prime)] &- [f_2(x_1^\prime,x_2^\prime),f_1(x_3^\prime)] \notag \\
  &= f_2([x_1^\prime,x_2^\prime]^\prime,x_3^\prime) + f_2(x_2^\prime, [x_1^\prime,x_3^\prime]^\prime) - f_2(x_1^\prime,[x_2^\prime,x_3^\prime]^\prime), \label{Eq: f2andbrackets}
\end{align}
for all $x_1^\prime, x_{2}^\prime, x_3^\prime \in L^{\prime \bullet}$, by the definition of morphisms of hemistrict Lie 2-algebras, and
\begin{align}\label{Eq: pullback rep}
  l^\prime_{x^\prime}v &= l_{f_1(x^\prime)}v, \quad \forall x^\prime \in L^{\prime\bullet}, v \in V^\bullet,
\end{align}
by the definition of pullback representations. To save space and time, we omit the proof.
However, in order to see how the above equations are involved in calculations, we verify Equation~\eqref{Eq: weak symm for fast} in the case that $\psi = (\psi_1,\psi_2) \in C^1(L,V)$ and Equation~\eqref{Eq: fastcochain} in the case that $\omega = (\omega_0^{(0)},\omega_1^{(0)},\omega_1^{(-1)},\omega_0^{(-1)}) \in C^2(L,V)$, respectively, by proving the following
\begin{align*}
  (f^\ast\psi)_2(x_1^\prime,x_2^\prime) +  (f^\ast\psi)_2(x_2^\prime,x_1^\prime) &= -(f^\ast\psi)_1(h_2^\prime(x_1^\prime,x_2^\prime)); \\
  (f^\ast D\omega)_0^{(0)}(x^\prime_1,x^\prime_2,x^\prime_3) &= (D f^\ast\omega)_0^{(0)} (x^\prime_1,x^\prime_2,x^\prime_3).
\end{align*}
For the first one, we compute
\begin{align*}
    &\quad (f^\ast\psi)_2(x_1^\prime,x_2^\prime) +  (f^\ast\psi)_2(x_2^\prime,x_1^\prime) \\
    &= \psi_2(f_1(x_1^\prime),f_1(x_2^\prime)) + \psi_2(f_1(x_2^\prime),f_1(x_1^\prime)) + \psi_1(f_2(x_1^\prime,x_2^\prime)) + \psi_1(f_2(x_2^\prime,x_1^\prime)) \\
    &= -\psi_1(h_2(f_1(x_1^\prime),f_1(x_2^\prime))) + \psi_1(f_2(x_1^\prime,x_2^\prime) + f_2(x_2^\prime,x_1^\prime)) \qquad\quad \text{by Equation~\eqref{Eq: f2}} \\
    &= -\psi_1(f_1(h_2^\prime(x_1^\prime,x_2^\prime))) = -(f^\ast\psi)_1(h_2^\prime(x_1^\prime,x_2^\prime)).
\end{align*}

For the second equation, we compute, on the one hand,
\begin{align*}
  (f^\ast D\omega)_0^{(0)}&(x^\prime_1,x^\prime_2,x^\prime_3) = (D\omega)_0^{(0)}(f_1(x_1^\prime),f_1(x_2^\prime),f_1(x_3^\prime)) \\
  &\quad +(D\omega)_1^{(0)}(f_1(x_3^\prime)\mid f_2(x_1^\prime,x_2^\prime))
  - (D\omega)_1^{(0)}(f_1(x_2^\prime)\mid f_2(x_1^\prime,x_3^\prime)) + (D\omega)_1^{(0)}(f_1(x_1^\prime)\mid f_2(x_2^\prime,x_3^\prime)),
\end{align*}
where
\begin{align*}
  (D\omega)_0^{(0)}&(f_1(x_1^\prime),f_1(x_2^\prime),f_1(x_3^\prime)) = -d\omega_0^{(-1)}(f_1(x_1^\prime),f_1(x_2^\prime),f_1(x_3^\prime)) \\
  &+ l_{f_1(x_1^\prime)} \omega_0^{(0)}(f_1(x_2^\prime),f_1(x_3^\prime)) - l_{f_1(x_2^\prime)} \omega_0^{(0)}(f_1(x_1^\prime),f_1(x_3^\prime)) + l_{f_1(x_3^\prime)} \omega_0^{(0)}(f_1(x_1^\prime),f_1(x_2^\prime)) \\
  &- \omega_0^{(0)}([f_1(x_1^\prime),f_1(x_2^\prime)],f_1(x_3^\prime)) - \omega_0^{(0)}(f_1(x_2^\prime),[f_1(x_1^\prime),f_1(x_3^\prime)]) + \omega_0^{(0)}(f_1(x_1^\prime),[f_1(x_2^\prime),f_1(x_3^\prime)]),
\end{align*}
and
\begin{align*}
  (D\omega)_1^{(0)}(f_1(x_3^\prime)\mid f_2(x_1^\prime,x_2^\prime)) &= -d\omega_1^{(-1)}(f_1(x_3^\prime)\mid f_2(x_1^\prime,x_2^\prime)) + \omega_0^{(0)}(df_2(x_1^\prime,x_2^\prime), f_1(x_3^\prime)) \\
  &\quad + l_{f_1(x_3^\prime)}\omega_1^{(0)}(f_2(x_1^\prime,x_2^\prime)) + \omega_1^{(0)}([f_2(x_1^\prime,x_2^\prime),f_1(x_3^\prime)]), \\
  (D\omega)_1^{(0)}(f_1(x_2^\prime)\mid f_2(x_1^\prime,x_3^\prime)) &= -d\omega_1^{(-1)}(f_1(x_2^\prime)\mid f_2(x_1^\prime,x_3^\prime)) + \omega_0^{(0)}(df_2(x_1^\prime,x_3^\prime), f_1(x_2^\prime)) \\
  &\quad + l_{f_1(x_2^\prime)}\omega_1^{(0)}(f_2(x_1^\prime,x_3^\prime)) + \omega_1^{(0)}([f_2(x_1^\prime,x_3^\prime),f_1(x_2^\prime)]),\\
  (D\omega)_1^{(0)}(f_1(x_1^\prime)\mid f_2(x_2^\prime,x_3^\prime)) &= -d\omega_1^{(-1)}(f_1(x_1^\prime)\mid f_2(x_2^\prime,x_3^\prime)) + \omega_0^{(0)}(df_2(x_2^\prime,x_3^\prime), f_1(x_1^\prime)) \\
  &\quad + l_{f_1(x_1^\prime)}\omega_1^{(0)}(f_2(x_2^\prime,x_3^\prime)) + \omega_1^{(0)}([f_2(x_2^\prime,x_3^\prime),f_1(x_1^\prime)]).
\end{align*}
On the other hand, we have
\begin{align*}
   (Df^\ast\omega)_0^{(0)}(x_1^\prime,x_2^\prime,x_3^\prime) &= (\delta f^\ast\omega)_0^{(0)}(x_1^\prime,x_2^\prime,x_3^\prime) + (d_{\LP}f^\ast\omega)_0^{(0)}(x_1^\prime,x_2^\prime,x_3^\prime),
\end{align*}
where
\begin{align*}
 (\delta f^\ast\omega)_0^{(0)}(x_1^\prime,x_2^\prime,x_3^\prime) &= -d\omega_0^{(-1)}(f_1(x_1^\prime),f_1(x_2^\prime),f_1(x_3^\prime)) - d\omega_1^{(-1)}(f_1(x_3^\prime)\mid f_2(x_1^\prime,x_2^\prime)) \\
 &+ d\omega_1^{(-1)}(f_1(x_2^\prime)\mid f_2(x_1^\prime,x_3^\prime)) - d\omega_1^{(-1)}(f_1(x_1^\prime)\mid f_2(x_2^\prime,x_3^\prime)),
\end{align*}
and
\begin{align*}
   (d_{\LP}f^\ast\omega)_0^{(0)}(x_1^\prime,x_2^\prime,x_3^\prime) &= l^\prime_{x_1^\prime}(f^\ast\omega)_0^{(0)}(x_2^\prime,x_3^\prime) - l^\prime_{x_2^\prime}(f^\ast\omega)_0^{(0)}(x_1^\prime,x_3^\prime) + l^\prime_{x_3^\prime}(f^\ast\omega)_0^{(0)}(x_1^\prime,x_2^\prime) \\
  &\quad - (f^\ast\omega)_0^{(0)}([x_1^\prime,x_2^\prime]^\prime,x_3^\prime) - (f^\ast\omega)_0^{(0)}(x_2^\prime,[x_1^\prime,x_3^\prime]^\prime) + (f^\ast\omega)_0^{(0)}(x_1^\prime,[x_2^\prime,x_3^\prime]^\prime) \\
  &= l^\prime_{x_1^\prime}\omega_0^{(0)}(f_1(x_2^\prime),f_1(x_3^\prime)) - l^\prime_{x_2^\prime}\omega_0^{(0)}(f_1(x_1^\prime),f_1(x_3^\prime)) + l^\prime_{x_3^\prime}\omega_0^{(0)}(f_1(x_1^\prime),f_1(x_2^\prime)) \\
  &\quad + l^\prime_{x_1^\prime}\omega_1^{(0)}(f_2(x_2^\prime,x_3^\prime)) - l^\prime_{x_2^\prime}\omega_1^{(0)}(f_2(x_1^\prime,x_3^\prime)) + l^\prime_{x_3^\prime}\omega_1^{(0)}(f_2(x_1^\prime,x_2^\prime)) \\
  &\quad - \omega_0^{(0)}(f_1([x_1^\prime,x_2^\prime]^\prime),f_1(x_3^\prime)) - \omega_0^{(0)}(f_1(x_2^\prime),f_1([x_1^\prime,x_3^\prime]^\prime)) + \omega_0^{(0)}(f_1(x_1^\prime),f_1([x_2^\prime,x_3^\prime]^\prime)) \\
  &\quad - \omega_1^{(0)}(f_2([x_1^\prime,x_2^\prime]^\prime,x_3^\prime)) - \omega_1^{(0)}(f_2(x_2^\prime,[x_1^\prime,x_3^\prime]^\prime)) + \omega_1^{(0)}(f_2(x_1^\prime,[x_2^\prime,x_3^\prime]^\prime)).
\end{align*}
Then it follows from Equations~\eqref{Eq: f2ischainhomotopy},~\eqref{Eq: f2andbrackets},~\eqref{Eq: pullback rep}, and a direct verification that
\begin{align*}
 (f^\ast D\omega)_0^{(0)}(x^\prime_1,x^\prime_2,x^\prime_3) &= (Df^\ast\omega)_0^{(0)}(x_1^\prime,x_2^\prime,x_3^\prime).
\end{align*}

\section{Cohomology of injective hemistrict Lie 2-algebras}\label{Sec: app}
A hemistrict Lie 2-algebra $L = (\vec{L},[-,-],h_2)$ is said to be injective, if the differential $d$ of the underlying 2-term complex $\vec{L} = L^{-1} \xrightarrow{d} L^0$ is injective, i.e., there is a short exact sequence of graded vector spaces
\begin{equation}\label{SES of injective hemistrict}
  0 \rightarrow L^{-1} \xrightarrow{d} L^0 \xrightarrow{\pr} L^0/dL^{-1} \rightarrow 0.
\end{equation}
Let $L = (\vec{L},[-,-],h_2)$ be an injective hemistrict Lie 2-algebra.
The vector space $H^\bullet(\vec{L}) = L^0/dL^{-1}$, together with the bracket $\{-,-\}$ defined by
\[
  \{\bar{x},\bar{y}\} = \pr([x,y]), \qquad \forall \bar{x},\bar{y} \in L^0/dL^{-1},
\]
for all $x,y \in L^0$ such that $\pr(x) = \bar{x}, \pr(y) = \bar{y}$, is a Lie algebra, which will be denoted by $L_{\Lie}$.
Meanwhile, according to Roytenberg~\cite{Roy1}, the skew-symmetrization on the bracket $[-,-]$ gives rise to a semistrict Lie 2-algebra $\tilde{L} = (\vec{L},\tilde{l}_2, \tilde{l}_3)$, where
\begin{align}\label{Eq: ltilde2}
  \tilde{l}_2(x,y) &= \frac{1}{2}\left([x,y] - (-1)^{\abs{x}\abs{y}}[y,x]\right), \;\;\forall x,y \in L^{-1} \oplus L^0, \\ \label{Eq: ltilde3}
  \tilde{l}_3(x,y,z) &= -\frac{1}{6}\left(h_2(\tilde{l}_2(x,y),z) + h_2(\tilde{l}_2(y,z), x) + h_2(\tilde{l}_2(z,x),y)\right),\;\;\forall x,y,z \in L^0.
\end{align}
The main purpose of this section is to build isomorphisms of these three objects on the cohomology level.
\subsection{Main theorem}
Assume that $V = (\vec{V},l,r,h_v)$ is a representation of $L$ such that $l_\alpha = 0$ for all $\alpha \in L^{-1}$. It follows that $l$ induces a representation of the Lie algebra $L_{\Lie}$ as well as a representation of the semistrict Lie 2-algebra $\tilde{L}$ on $\vec{V}$, i.e., $V$ is both an $L_{\Lie}$-module and an $\tilde{L}$-module.

Here is our main theorem:
\begin{thm}\label{main theorem}
   Let $L$ be an injective hemistrict Lie 2-algebra and $V = (\vec{V},l,r,h_v)$ a representation such that $l_\alpha = 0$ for all $\alpha \in L^{-1}$.
   \begin{compactenum}
   \item Assume that the alternator $h_2$ satisfies $h_2(d\alpha,d\beta) = 0$ for any $\alpha,\beta \in L^{-1}$. Then the cohomology of the representation $V$ of $L$ is isomorphic to the Chevalley-Eilenberg cohomology of the Lie algebra $L_{\Lie}$ with coefficient $V$, i.e.,
     \[
       H^\bullet(L,V) \cong H^\bullet_{\CE}(L_{\Lie},V).
     \]
   \item The cohomology of the semistrict Lie 2-algebra $\tilde{L}$ obtained from skew-symmetrization is isomorphic to the Chevalley-Eilenberg cohomology of the Lie algebra $L_{\Lie}$, i.e.,
       \[
       H^\bullet(\tilde{L},V) \cong H^\bullet_{\CE}(L_{\Lie},V).
       \]
   \end{compactenum}
\end{thm}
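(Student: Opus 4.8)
The plan is to compare both $H^\bullet(L,V)$ and $H^\bullet(\tilde L,V)$ to the Chevalley--Eilenberg complex of $L_{\Lie}$ with coefficients in the $2$-term complex $\vec V$, exploiting that $\vec L$ is a resolution of $L_{\Lie}$. Two preliminary reductions are harmless. By Lemma~\ref{Key lemma on rep} and Proposition~\ref{prop: functoriality1}, the standard complex of $V$ is isomorphic to that of its symmetric model $V_s = (\vec V,l,-l,0)$, so I may assume $r = -l$ and $h_v = 0$. Since $l\colon \vec L \to \End(\vec V)$ is a cochain map and $l_\alpha = 0$ for all $\alpha \in L^{-1}$, also $l_{d\alpha}=d\circ l_\alpha + l_\alpha\circ d=0$; hence $l$ annihilates the ideal $dL^{-1}\subset L^0$ and descends to the $L_{\Lie}$-module structure on $\vec V$ appearing on the right-hand side. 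Finally, injectivity of $d$ forces $h_2(x,y) = d^{-1}([x,y]+[y,x])$, so $h_2$ is determined by the bracket, and the hypothesis $h_2(d\alpha,d\beta)=0$ in part (1) is equivalent to the bracket being skew-symmetric on $dL^{-1}$. I fix once and for all a linear splitting $L^0 = dL^{-1}\oplus W$ with $\pr|_W\colon W\xrightarrow{\sim}L_{\Lie}$ and the corresponding section $s\colon L^0\to L^{-1}$ of $d$, so that $sd = \id_{L^{-1}}$ and $ds = \id_{L^0}-\iota_W\pr|_W$.

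For part (1), I would first realise $C^\bullet_{\CE}(L_{\Lie},V)$ as a subcomplex of $C^\bullet(L,V)$ via the inflation map $j$ sending a skew cochain $\xi$ on $L_{\Lie}$ (valued in $V^0$ or $V^{-1}$) to the tuple with $\omega^{(p)}_0 = \pr^\ast\xi$ and $\omega^{(p)}_k = 0$ for $k\geq1$. The weak symmetry constraint~\eqref{Eq: weak symmetry} holds because $\pr^\ast\xi$ is skew and the $h_2$-correction only involves $\omega_1 = 0$; that $j$ intertwines the differentials is a direct check relying on $l_\alpha = 0$ (which annihilates the term $l_{\alpha_j}\omega^{(0)}_{k-1}$ of $d_\LP$), on the fact that $\pr^\ast\xi$ kills every argument in $dL^{-1}$ (which annihilates all of $\delta$ on $\operatorname{im}(j)$ except the internal differential of $\vec V$), and on the classical fact that the Loday--Pirashvili differential of a symmetric module carries cochains inflated along $\pr\colon L^0\to L_{\Lie}$ again to inflated cochains, where it coincides with the Chevalley--Eilenberg differential of $L_{\Lie}$ --- this last point using only that $dL^{-1}$ is an ideal acting trivially and that the Leibniz bracket of $L^0$ descends to the Lie bracket of $L_{\Lie}$. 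It then remains to show that $j$ is a quasi-isomorphism. I would argue that the associated graded of $C^\bullet(L,V)$ for the (increasing, finite, hence convergent) filtration by the number of symmetric $L^{-1}[1]$-slots is $\Hom(\Lambda^\bullet L^0\otimes S^\bullet L^{-1},\vec V)$ equipped with the Chevalley--Eilenberg-plus-internal differential --- here $l_\alpha = 0$ is what makes $d_\LP$ preserve the filtration and the weak symmetry degenerate to honest skew-symmetry in the $L^0$-slots --- while the next differential in the resulting spectral sequence is the Koszul differential of the two-term complex $L^{-1}\xrightarrow{d}L^0$. Injectivity of $d$ makes $\Lambda^\bullet L^0\otimes S^\bullet L^{-1}$ a resolution of $\Lambda^\bullet L_{\Lie}$, and the hypothesis $h_2(d\alpha,d\beta)=0$ is precisely what forces the $h_2$-twisted multiplication in $U(L[1])$ to restrict to the ordinary Koszul product on the $d(L^{-1})$-generators; together these collapse the spectral sequence onto $C^\bullet_{\CE}(L_{\Lie},V)$, so $j$ is a quasi-isomorphism. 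An equivalent route replaces the spectral sequence by an explicit contracting homotopy built from $s$, removing one $L^{-1}[1]$-slot, resp.\ one $dL^{-1}$-argument, at a time.

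Part (2) is softer, since the cohomology of the semistrict Lie $2$-algebra $\tilde L$ carries no weak symmetry constraint and no $h_2$-twist. The data $\iota_W\colon L_{\Lie}\hookrightarrow\vec L$, $\pr|_W$ and $s$ exhibit $\vec L$ as a deformation retract of $L_{\Lie}$ placed in degree $0$; transferring $(\tilde l_2,\tilde l_3)$ along this contraction by the homotopy transfer theorem yields an $L_\infty$-structure on $L_{\Lie}$ whose only surviving operation --- because $H^{-1}(\vec L)=0$ --- is the Lie bracket of $L_{\Lie}$, and the transfer data assemble into an $L_\infty$-quasi-isomorphism $L_{\Lie}\rightsquigarrow\tilde L$. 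Since the $\tilde L$-module structure on $V_s$ is inflated from the $L_{\Lie}$-action, the associated cochain complexes are quasi-isomorphic --- one writes the comparison maps and the homotopy explicitly from the transfer data --- so $H^\bullet(\tilde L,V)\cong H^\bullet_{\CE}(L_{\Lie},V)$. Concatenating parts (1) and (2) gives the displayed chain of isomorphisms. The real difficulty of the argument is concentrated in the quasi-isomorphism step of part (1): keeping the contracting homotopy (or the spectral-sequence differentials) simultaneously compatible with the weak symmetry property~\eqref{Eq: weak symmetry} and with the $h_2$-twisted relations defining $U(L[1])$, which is exactly where the two hypotheses $l_\alpha = 0$ and $h_2(d\alpha,d\beta)=0$ are indispensable and where essentially all the computation lies.
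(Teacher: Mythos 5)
Your preliminary reductions are all sound, and your part (2) is essentially the paper's argument in different clothing: the paper contracts the cochain complex $C^\bullet(\tilde L,V)$ onto $\wedge^\bullet(L^0/K)^\vee\otimes V$ via the splitting and the homological perturbation lemma (Lemmas~\ref{lemma} and~\ref{main thm}), the key point being $h\rho\psi=0$ so that the induced differential on the small complex is exactly $d_V+d_{\CE}$; transferring $(\tilde l_2,\tilde l_3)$ to $L_{\Lie}$ first and then comparing CE complexes is the same computation packaged at the level of the algebras.

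The genuine gap is in the quasi-isomorphism step of part (1). Your inflation $j$ is precisely the pullback $f^\ast$ along $f=(\pr,0)\colon L\to L_{\Lie}$ and is indeed a well-defined chain map, but the spectral-sequence justification fails as stated. The increasing filtration by the number of $L^{-1}[1]$-slots is not preserved by $D$ (the $\delta$-part sends the level-$j$ component $\omega_j(d\alpha,\dots)$ into level $j+1$); only the decreasing filtration $F^j=\{\omega:\omega_k=0\ \text{for}\ k<j\}$ is $D$-stable. And for that filtration the associated graded is \emph{not} $\Hom(\Lambda^\bullet L^0\otimes S^\bullet L^{-1},\vec V)$: the weak symmetry~\eqref{Eq: weak symmetry} applied at level $j-1$ to an $\omega$ with $\omega_{j-1}=0$ forces $\omega_j(\dots\mid h_2(x,y),\dots)=0$, so the graded piece at level $j\geq 1$ consists only of cochains annihilating the image of $h_2$ in every symmetric slot (and carries no skew-symmetry in the $L^0$-slots, since the symmetric part of $\omega_j$ is absorbed by $\omega_{j+1}$). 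In the motivating example $L_\g$ the alternator is surjective onto $L^{-1}$, so all graded pieces with $j\geq1$ vanish and the filtration degenerates completely --- the opposite of the Koszul picture you describe. What actually carries the proof in the paper is the route you mention only as an ``equivalent alternative'': Lemma~\ref{Lemma: FG} produces $g=(j,\pr_{-1}\circ[j(\cdot),j(\cdot)])\colon L_{\Lie}\to L$ with $f\circ g=\id$ and a $2$-morphism $\theta=\pr_{-1}\colon\id_L\Rightarrow g\circ f$, and Lemma~\ref{Lemma: Theta} writes down an explicit chain homotopy $\Theta$ with $\id-(g\circ f)^\ast=D\Theta+\Theta D$; this is exactly where the hypothesis $h_2(d\alpha,d\beta)=0$ enters and where all the work lies. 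Since your proposal defers precisely that computation and the substitute argument you do give is incorrect, part (1) is not yet established.
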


As a consequence, we have
\begin{Cor}\label{Cor of L et tildeL}
  Under the assumptions as in Theorem~\ref{main theorem}, we have
  \[
   H^\bullet(L,V) \cong H^\bullet(\tilde{L},V).
  \]
\end{Cor}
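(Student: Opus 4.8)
The plan is to deduce the corollary directly from the two isomorphisms established in Theorem~\ref{main theorem}, using the transitivity of isomorphism. Since the corollary is invoked ``under the assumptions as in Theorem~\ref{main theorem}'', we have at our disposal both hypotheses there: $l_\alpha = 0$ for all $\alpha \in L^{-1}$, and $h_2(d\alpha,d\beta) = 0$ for all $\alpha,\beta \in L^{-1}$. The first hypothesis is precisely what guarantees (as noted just before the theorem) that $l$ descends to a representation of $L_{\Lie}$ on $\vec{V}$ and to a representation of the semistrict Lie $2$-algebra $\tilde{L}$ on $\vec{V}$, so that both $H^\bullet(L,V)$, $H^\bullet(\tilde{L},V)$ and the common target $H^\bullet_{\CE}(L_{\Lie},V)$ are defined, and the latter is literally the same object in the two parts of the theorem.

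Granting that, part (1) of Theorem~\ref{main theorem} supplies an isomorphism $\Phi\colon H^\bullet(L,V) \xrightarrow{\sim} H^\bullet_{\CE}(L_{\Lie},V)$ and part (2) supplies an isomorphism $\Psi\colon H^\bullet(\tilde{L},V) \xrightarrow{\sim} H^\bullet_{\CE}(L_{\Lie},V)$. The isomorphism asserted in the corollary is then the composition $\Psi^{-1} \circ \Phi\colon H^\bullet(L,V) \xrightarrow{\sim} H^\bullet(\tilde{L},V)$. There is no genuine obstacle here: all the work has already been done in establishing Theorem~\ref{main theorem}, and the corollary is a purely formal consequence; the only thing to observe is that both $\Phi$ and $\Psi$ are constructed from the \emph{same} induced $L_{\Lie}$-module structure on $\vec{V}$, so the composite is well defined without any further compatibility check.

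One could, if desired, phrase the comparison more intrinsically by producing a chain-level morphism between $C^\bullet(L,V)$ and $C^\bullet(\tilde{L},V)$ reflecting Roytenberg's skew-symmetrization of $[-,-]$ and $h_2$ into $\tilde{l}_2$ and $\tilde{l}_3$, and then identifying it, via the quasi-isomorphisms used in the proof of Theorem~\ref{main theorem}, with $\Psi^{-1}\circ\Phi$; this is the sort of compatibility between the cohomology functor and the skew-symmetrization functor announced for the sequel. For the corollary as stated, however, composing the two isomorphisms of Theorem~\ref{main theorem} is all that is required.
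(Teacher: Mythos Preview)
Your proof is correct and matches the paper's own reasoning: the paper states the corollary with the words ``As a consequence, we have'' and gives no further argument, so composing the isomorphisms from parts (1) and (2) of Theorem~\ref{main theorem} through the common target $H^\bullet_{\CE}(L_{\Lie},V)$ is exactly what is intended. Your observation that both parts use the same induced $L_{\Lie}$-module structure on $\vec{V}$ is the only point that needs to be noted, and you have done so.
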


\begin{Rem}
  In fact, each representation $V$ of a hemistrict Lie 2-algebra $L = (\vec{L},[-,-],h_2)$ induces a representation $\{\mu_k\}_{k=1}^3$ of the semistrict Lie 2-algebra $\tilde{L}$ obtained from $L$ via skew-symmetrization on $\vec{V}$, where
  \begin{align*}
    \mu_1 &= d\colon V^{-1} \to V^0, & \mu_2 &= l\colon L^\bullet \otimes V^\bullet \to V^\bullet,
  \end{align*}
  and $\mu_3\colon L^0 \wedge L^0 \otimes V^0 \to V^{-1}$ is specified by
  \[
   \mu_3(x,y,v) = \frac{1}{2}l(h_2(x,y),v),
  \]
  for all $x,y \in L^0$ and $v \in V^0$. It is natural to ask if the isomorphism in Corollary~\ref{Cor of L et tildeL} holds for general representations. We will investigate this problem in a incoming paper.
\end{Rem}

\subsection{Proof of Theorem~\ref{main theorem}}
\subsubsection{Proof of the first statement}
Note that the associate Lie algebra $L_{\Lie}$ may also be viewed as a hemistrict Lie 2-algebra $(0 \oplus L^0/dL^{-1},[-,-], h_2=0)$. The projection $\pr\colon L^0 \to L^0/dL^{-1}$ extends to a morphism of hemistrict Lie 2-algebras
\[
 f:= (\pr,0)\colon L \longrightarrow L_{\Lie}.
\]

The first observation is the following
\begin{lem}\label{Lemma: FG}
Let $j\colon L^0/dL^{-1} \rightarrow L^0$ be a splitting of~\eqref{SES of injective hemistrict}. There associates a morphism of hemistrict Lie 2-algebras
\[
 g := (g_1,g_2) \colon L_{\Lie} \longrightarrow L,
\]
defined by
\begin{align*}
  g_1(\bar{x}) &= j(\bar{x}), & g_2(\bar{x},\bar{y}) &= \pr_{-1}([j(\bar{x}),j(\bar{y})]),
\end{align*}
for all $\bar{x},\bar{y} \in L^0/dL^{-1}$, where $\pr_{-1}\colon L^0 \rightarrow L^{-1}$ is the projection specified by $\id_L = j \circ \pr + d \circ \pr_{-1}$, such that
\begin{align*}
 f \circ g &= \id \colon L_{\Lie} \to L_{\Lie}.
\end{align*}
Furthermore, the map $\theta:=\pr_{-1}\colon L^0 \to L^{-1}$ gives rise to a 2-morphism $\theta\colon \id_L \Rightarrow g \circ f$.
\end{lem}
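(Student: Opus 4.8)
The plan is to establish, in order, that the pair $g=(g_1,g_2)$ obeys the morphism axioms~\eqref{Eq: f1uptof2}, \eqref{Eq: h2andf2} and~\eqref{Eq: Jacobiandf2}; that $f\circ g=\id_{L_{\Lie}}$; and that $\theta=\pr_{-1}$ obeys the two defining identities of a 2-morphism $\id_L\Rightarrow g\circ f$. Here $L_{\Lie}$ is regarded as the hemistrict Lie 2-algebra $(0\oplus L^0/dL^{-1},[-,-],h_2=0)$, which is legitimate since the induced bracket on $L^0/dL^{-1}$ is skew-symmetric. The single computational device used throughout is the decomposition of the identity $\id_{L^0}=j\circ\pr+d\circ\pr_{-1}$ that defines $\pr_{-1}$, together with its consequences $\pr_{-1}\circ d=\id_{L^{-1}}$ (valid because $d$ is injective), $\pr\circ d=0$ and $\pr_{-1}\circ j=0$; every verification below reduces to bookkeeping with these idempotents and the fact that $[-,-]$ is a cochain map.

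First, $g_1=j$ is automatically a cochain map since $L_{\Lie}$ is concentrated in degree $0$. Axiom~\eqref{Eq: f1uptof2} follows by applying $\id_{L^0}=j\circ\pr+d\circ\pr_{-1}$ to $[j\bar x,j\bar y]\in L^0$ and using $[\bar x,\bar y]_{L_{\Lie}}=\pr([j\bar x,j\bar y])$, which yields $[g_1(\bar x),g_1(\bar y)]-g_1([\bar x,\bar y]_{L_{\Lie}})=d\pr_{-1}([j\bar x,j\bar y])=d(g_2(\bar x,\bar y))$ (the differential of $L_{\Lie}$ being zero). Axiom~\eqref{Eq: h2andf2}, in which the source alternator $h_2^\prime$ vanishes, reduces to $h_2(j\bar x,j\bar y)=g_2(\bar x,\bar y)+g_2(\bar y,\bar x)$, which I would obtain by applying $\pr_{-1}$ to the anticommutativity-up-to-homotopy relation~\eqref{Eq: anticomm up homotopy} for $j\bar x,j\bar y\in L^0$ and using $\pr_{-1}\circ d=\id$.

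The substantive step is axiom~\eqref{Eq: Jacobiandf2}. Since all three terms lie in $L^{-1}$ and $d$ is injective, I would verify it after applying $d$. Writing $P(\bar a,\bar b):=d(g_2(\bar a,\bar b))=[j\bar a,j\bar b]-j([\bar a,\bar b]_{L_{\Lie}})$ and using that $[-,-]$ is a cochain map, the $d$-image of the left-hand side becomes $[j\bar x,[j\bar y,j\bar z]]-[j\bar y,[j\bar x,j\bar z]]-[[j\bar x,j\bar y],j\bar z]$ plus a combination of brackets of a single $j$ with $j$ of a reduced bracket; the first three terms cancel by the Leibniz rule in $L^0$, and the remaining terms match the $d$-image of the right-hand side once its pure-$j$ part is killed by the Leibniz rule for the induced bracket on $L^0/dL^{-1}$ (equivalently, the Jacobi identity of $L_{\Lie}$). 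This bracket-juggling, with its sign bookkeeping, is the one place where a nontrivial cancellation is forced, and I expect it to be the main obstacle; the rest is routine.

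Finally, $f\circ g=\id_{L_{\Lie}}$ is immediate from the composition formula: $(f\circ g)_1=\pr\circ j=\id$, and $(f\circ g)_2(\bar x,\bar y)=f_1(g_2(\bar x,\bar y))=0$ because $g_2$ takes values in $L^{-1}$ while $f_1$, being a cochain map into $L_{\Lie}$ with $L_{\Lie}^{-1}=0$, vanishes in degree $-1$. For $\theta=\pr_{-1}$ one first computes $(g\circ f)_1=j\circ\pr$ and $(g\circ f)_2(x,y)=\pr_{-1}([j\pr x,j\pr y])$; the first 2-morphism identity then reads $x=j\pr(x)+d\pr_{-1}(x)$ on $L^0$ and $\alpha=\pr_{-1}(d\alpha)$ on $L^{-1}$, both true by construction, and the second identity $-\pr_{-1}([j\pr x,j\pr y])=[j\pr x,\pr_{-1}(y)]+[\pr_{-1}(x),y]-\pr_{-1}([x,y])$ I would prove by substituting $x=j\pr x+d\pr_{-1}x$ and $y=j\pr y+d\pr_{-1}y$, expanding $\pr_{-1}([x,y])$ into four summands, and simplifying each off-diagonal summand via the cochain-map property and $\pr_{-1}\circ d=\id$ (for instance $\pr_{-1}([j\pr x,d\pr_{-1}y])=\pr_{-1}(d[j\pr x,\pr_{-1}y])=[j\pr x,\pr_{-1}y]$); after cancellation only $-\pr_{-1}([j\pr x,j\pr y])$ survives, as required.
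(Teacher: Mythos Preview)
Your proposal is correct and follows essentially the same route as the paper: both arguments hinge on the idempotent decomposition $\id_{L^0}=j\circ\pr+d\circ\pr_{-1}$ together with $\pr_{-1}\circ d=\id_{L^{-1}}$, and verify the morphism axioms, the identity $f\circ g=\id$, and the 2-morphism conditions by the same expansions. The only cosmetic difference is in axiom~\eqref{Eq: Jacobiandf2}: the paper stays in $L^{-1}$ and observes directly that the six terms combine to $\pr_{-1}$ applied to the Leibniz identity for $j\bar x,j\bar y,j\bar z$, whereas you first apply $d$ and invoke Leibniz in both $L^0$ and $L_{\Lie}$; both are equally valid.
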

\begin{proof}
  We first show that $g$ is well-defined by verifying Equations~\eqref{Eq: f1uptof2},~\eqref{Eq: h2andf2} and~\eqref{Eq: Jacobiandf2} in this case. In fact, we have, by definition,
  \begin{align*}
    [g_1(\bar{x}),g_1(\bar{x})] - g_1(\{\bar{x},\bar{y}\}) &= [j(\bar{x}),j(\bar{y})] - j(\pr([j(\bar{x}),j(\bar{y})])) \\
    &= d\pr_{-1}([j(\bar{x}),j(\bar{y})]) = dg_2(\bar{x},\bar{y}),
  \end{align*}
  \begin{align*}
    g_2(\bar{x},\bar{y}) + g_2(\bar{y},\bar{x}) &= \pr_{-1}([j(\bar{x}),j(\bar{y})]+[j(\bar{y}),j(\bar{x})]) = \pr_{-1}(dh_2(j(\bar{x}),j(\bar{y}))) \\
    &= h_2(j(\bar{x}),j(\bar{y})) = h_2(g_1(\bar{x}),g_1(\bar{y})),
  \end{align*}
  and
  \begin{align*}
    &\quad [g_1(\bar{x}),g_2(\bar{y},\bar{z})] - [g_1(\bar{y}),g_2(\bar{x},\bar{z})] - [g_2(\bar{x},\bar{y}),\bar{z}] - g_2(\{\bar{x},\bar{y}\},\bar{z}) - g_2(\bar{y},\{\bar{x},\bar{z}\}) + g_2(\bar{x},\{\bar{y},\bar{z}\}) \\
    &= \pr_{-1}([j(\bar{x}),[j(\bar{y}),j(\bar{z})]] - [j(\bar{y}),[j(\bar{x}),j(\bar{z})]] - [[j(\bar{x}),j(\bar{y})],j(\bar{z})]) = 0.
  \end{align*}
Meanwhile, since
\begin{align*}
  (f\circ g)_1 &= f_1 \circ g_1 = \pr \circ j = \id, & (f\circ g)_2 &= f_2 \circ (g_1 \otimes g_1) + f_1 \circ g_2 = 0,
\end{align*}
it follows that $f \circ g = \id \colon L_{\Lie} \to L_{\Lie}$.

Finally, since on the one hand
\begin{align*}
  (g \circ f)_1(x) &= g_1(f_1(x)) = j(\pr(x)) = x - d\pr_{-1}(x) = x -d(\theta)(x),\quad \forall x, y \in L^{\bullet},
\end{align*}
thus $\theta$ is a chain homotopy from $\id_L$ to $g \circ f$. On the other hand, note that
\begin{align*}
  (g\circ f)_2(x,y) &= g_2(f_1(x),f_1(y)) + g_1(f_2(x,y)) = \pr_{-1}([j(\pr(x)),j(\pr(y))]) = \pr_{-1}([x-d\theta(x),y-d\theta(y)]) \\
  &= \theta([x,y]) - \pr_{-1}(d[x,\theta(y)]) - \pr_{-1}(d[\theta(x),y]) + \pr_{-1}(d[d\theta(x),\theta(y)]) \\
  &= \theta([x,y]) - [x,\theta(y)] - [\theta(x),y] + [d\theta(x),\theta(y)] \\
  &= \theta([x,y]) - [(g\circ f)_1(x),\theta(y)] - [\theta(x),y],
\end{align*}
for all $x,y\in L^0$. Thus, $\theta \colon \id_L \Rightarrow g\circ f$ is a 2-morphism.
\end{proof}

Let $V = (\vec{V},l,r,h_v)$ be a representation of $L$ such that $l_\alpha = 0$ for all $\alpha \in L^{-1}$. By Lemma~\ref{Lemma: FG}, $V$ is $(g\circ f)$-compatible, and the pullback representation by $g \circ f$ coincides with $V$. By Proposition~\ref{prop: pullback}, we have a cochain map
\[
 (g\circ f)^\ast\colon C^\bullet(L,V) \to C^\bullet(L,V).
\]
Moreover, we have the following
\begin{lem}\label{Lemma: Theta}
  Under the assumption that $h_2(d\alpha,d\beta) = 0$ for any $\alpha,\beta \in L^{-1}$, the 2-morphism $\theta\colon \id_L \Rightarrow g \circ f$ gives rise to a 2-morphism $\Theta\colon \id_{C^\bullet(L,V)} \Rightarrow (g\circ f)^\ast$ in the 2-category of cochain complexes, i.e., there exists a chain homotopy $\Theta\colon C^\bullet(L,V) \to C^{\bullet-1}(L,V)$ such that for all $\omega \in C^\bullet(L,V)$,
  \[
   \omega - (g\circ f)^\ast(\omega) = D\Theta(\omega) + \Theta(D\omega).
  \]
\end{lem}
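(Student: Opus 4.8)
The plan is to produce $\Theta$ as the homotopy operator attached to the chain homotopy $\theta = \pr_{-1} \colon L^0 \to L^{-1}$, built by the same recipe as the classical contraction with a homotopy. For $\omega = (\omega^{(p)}_k) \in C^n(L,V)$ I would set, up to signs to be fixed below,
\[
 \Theta(\omega)^{(p)}_k(x_1,\dots,x_m \mid \alpha_1,\dots,\alpha_k) = \sum_{i=1}^{m} \pm\, \omega^{(p)}_{k+1}(x_1,\dots,\widehat{x_i},\dots,x_m \mid \theta(x_i),\alpha_1,\dots,\alpha_k),
\]
where $m = n-1-p-2k$; thus $\Theta$ takes one degree-zero argument, pushes it through $\theta$ into the symmetric (degree $-1$) block, and lowers the total degree by one because $\theta$ has degree $-1$. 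The first task is to choose the signs so that $\Theta(\omega)$ again satisfies the weak symmetry~\eqref{Eq: weak symmetry}: upon interchanging two adjacent degree-zero slots $x_s, x_{s+1}$, the summands with $i \in \{s, s+1\}$ cancel pairwise, while the remaining summands recombine, by the weak symmetry of $\omega$ together with the alternator identities~\eqref{Eq1: hemi2}--\eqref{Eq2: hemi2}, into the required $h_2(x_s,x_{s+1})$-correction. This makes $\Theta$ a well-defined degree $-1$ operator $C^\bullet(L,V) \to C^{\bullet-1}(L,V)$.

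The core is then the identity $\omega - (g\circ f)^\ast\omega = D\Theta(\omega) + \Theta(D\omega)$. I would expand both sides using the explicit description of $D$ (Lemma~\ref{Lemma: differential}), of $(g\circ f)^\ast$ (Proposition~\ref{prop: pullback}), and of $g$, $g_2$, $\theta$ (Lemma~\ref{Lemma: FG}), invoking in particular that $(g\circ f)_1 = \id - d(\theta)$, that $(g\circ f)_1$ annihilates $L^{-1}$, and that $(g\circ f)_2(x,y) = \theta([x,y]) - [(g\circ f)_1(x),\theta(y)] - [\theta(x),y]$. Two structural simplifications should be made at the outset. First, because $l_\alpha = 0$ for all $\alpha \in L^{-1}$, the term $l_{\alpha_j}\omega^{(0)}_{k-1}(\cdots)$ in the Loday--Pirashvili part of $D$ drops out, so that $D$ couples the $V^0$- and $V^{-1}$-components only through the internal differential $d$. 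Second, because $(g\circ f)_1$ kills $L^{-1}$, every component of $(g\circ f)^\ast\omega$ with $k \geq 1$ vanishes, and its $k=0$ component is $\omega^{(p)}_0(j(\pr(x_1)),\dots)$ plus the $(g\circ f)_2$-insertions prescribed by Proposition~\ref{prop: pullback}.

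With these reductions the identity splits into three matching problems. (i) On the $k=0$ component, the $d$-part of $D$, the $d(\theta)$ appearing in $(g\circ f)_1 = \id - d(\theta)$, and the $\theta(x_i)$ inserted by $\Theta$ combine into a telescoping cancellation that carries $\omega^{(p)}_0$ to $(g\circ f)^\ast\omega^{(p)}_0$ on the leading terms. (ii) The bracket terms $[\alpha_j, x_i]$ and $[x_i,x_j]$ of $d_{\LP}$, once $\alpha_j$ is the slot inserted by $\Theta$, reassemble --- using the left Leibniz rule and $l_\alpha = 0$ --- precisely the insertion $\theta([x_i,x_j]) - [(g\circ f)_1(x_i),\theta(x_j)] - [\theta(x_i),x_j] = (g\circ f)_2(x_i,x_j)$ occurring in $(g\circ f)^\ast$. (iii) For $k \geq 1$ the identity asserts that the $k$-component of $\omega$ is recovered entirely from the homotopy terms; this holds because on each symmetric slot $\Theta$ restricts to a contracting homotopy for the acyclic two-term complex $L^{-1} \xrightarrow{d} L^0$, and it is here that $l_\alpha = 0$ and $h_2(d\alpha,d\beta) = 0$ are indispensable, the latter serving to discard the alternator contributions $h_2(d\theta(x),-)$ and $h_2(d\theta(x),d\theta(y))$ that arise from $(g\circ f)_1 = \id - d(\theta)$ and from the compatibility~\eqref{Eq: h2andf2} for $g\circ f$.

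The main obstacle is the sign and index bookkeeping in (ii)--(iii): one must reconcile the $\sgn$ factors of Proposition~\ref{prop: pullback} (which encode mod-$2$ intersection numbers of the contracted pairs) with the signs accumulated by iterating the single-slot operator $\Theta$, and verify that the double-insertion terms $\omega^{(p)}_{k+2}(\cdots \mid \theta(x_i),\theta(x_j),\cdots)$ appearing in both $D\Theta$ and $\Theta D$ cancel in pairs. Everything else should be routine. A cleaner but ultimately equivalent route would be to first prove, in general, that a $2$-morphism between morphisms of hemistrict Lie $2$-algebras induces a chain homotopy between the corresponding morphisms of standard complexes --- a $2$-functorial refinement of Proposition~\ref{prop: pullback} --- and then specialize to $f = \id_L$, $g\circ f$, and $\theta$; this, however, merely repackages the same computation.
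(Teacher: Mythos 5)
Your overall strategy --- an explicit chain homotopy on the standard complex induced by the $2$-morphism $\theta = \pr_{-1}$, with the hypotheses $l_\alpha = 0$ and $h_2(d\alpha,d\beta)=0$ entering where you say they do --- is the same as the paper's. But the operator you write down is not the right one, and the identity $\omega - (g\circ f)^\ast\omega = D\Theta(\omega) + \Theta(D\omega)$ fails for it. The defect is visible by tracking which components $\omega^{(p)}_{k+q}$ can contribute to each side. Your $\Theta$ inserts a single $\theta(x_i)$ and raises the symmetric index by exactly one, so $(\Theta\eta)_k$ depends only on $\eta_{k+1}$; since $\delta$ and $d_{\LP}$ relate the $k$-component to the $k$- and $(k-1)$-components only (the cross term $l_{\alpha_j}\omega^{(0)}_{k-1}$ vanishing by the hypothesis $l_\alpha = 0$), both $(D\Theta\omega)_k$ and $(\Theta D\omega)_k$ involve only $\omega_k$ and $\omega_{k+1}$. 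On the other hand, since $(g\circ f)_1$ annihilates $L^{-1}$, the only surviving component of $(g\circ f)^\ast\omega$ is the $k=0$ one, and by Proposition~\ref{prop: pullback} it contains the terms $\omega^{(p)}_{q}(\cdots \mid (g\circ f)_2(x_{i_1},x_{j_1}),\cdots,(g\circ f)_2(x_{i_q},x_{j_q}))$ for every $q$ up to $\lfloor\frac{n-p}{2}\rfloor$, where $(g\circ f)_2(x,y)=\pr_{-1}([j\pr(x),j\pr(y)])$ does not vanish in general. So already for $n-p\geq 4$ the left-hand side involves $\omega_2$-data that the right-hand side cannot produce. Your remark that the ``double-insertion terms cancel in pairs'' is beside the point: with a single-insertion $\Theta$ no such terms arise at all, and that is precisely the problem.

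The homotopy the paper constructs is the full ``prism'' operator: $(\Theta\omega)^{(p)}_k$ is a sum over all $q\geq 0$ of terms valued in $\omega^{(p)}_{k+q+1}$, in which $q$ disjoint pairs of degree-zero slots are contracted to $(g\circ f)_2(x_{i_a},x_{j_a})$ in the symmetric block (with the $\sgn$ factors of Equation~\eqref{Eq: sgn}), one distinguished slot is sent to $\theta(x_i)$, and each remaining degree-zero slot is replaced by either $(g\circ f)_1(x_l)$ or $d\theta(x_m)$ --- interpolating $\id = (g\circ f)_1 + d\theta$ --- all weighted by coefficients $\frac{(-1)^{i-1}q!}{(k+t+1)\cdots(k+t+q+1)}$. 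Note that even your leading ($q=0$) term differs from the correct one: the untouched slots must carry $(g\circ f)_1(x_l)$ or $d\theta(x_m)$, not the bare $x_l$, or the telescoping in your step (i) does not close. Your closing suggestion --- prove in general that a $2$-morphism of hemistrict Lie $2$-algebras induces a chain homotopy of standard complexes and then specialize --- is a legitimate repackaging, but it requires constructing this same multi-insertion operator, so it does not rescue the single-insertion formula.
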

\begin{proof}
For each $\omega \in C^\bullet(L,V)$, define
\begin{align*}
  &\quad (\Theta\omega)_k^{(p)}(x_1,\cdots,x_{n-2k-p-1} \mid \alpha_1,\cdots,\alpha_k) \\
  &= \sum_{q=0}^{\lfloor\frac{n-p}{2}\rfloor -k-1} \sum_{\substack{i_1<j_1;\cdots;i_q<j_q, \\ i_1<\cdots<i_q}} \sum_{i \notin \{i_1,j_1,\cdots,i_q,j_q\}} \sum_{\substack{m_1<\cdots<m_t;l_1<\cdots<l_r \\ \{i,i_s,j_s,m,l\} = \{1,\cdots,n-2k-p-1\}}}\\
  &\qquad\qquad\quad  \frac{(-1)^{i-1}q!}{(k+t+1)\cdots(k+t+q+1)}  \sgn(i_1<j_1;\cdots;i_q<j_q) \prod_{s=1}^q\sgn(i_s-i)\sgn(j_s-i) \\
  &\qquad\qquad\qquad\qquad  \omega^{(p)}_{k+q+1}((g\circ f)_1(x_{l_1}),\cdots,d\theta(x_{m_1}),\cdots,(g\circ f)_1(x_{l_r}),\cdots, d\theta(x_{m_t}) \\
  &\qquad\qquad\qquad\qquad\qquad\qquad\qquad \mid \theta(x_i), (g\circ f)_2(x_{i_1},x_{j_1}), \cdots, (g\circ f)_2(x_{i_q}, x_{j_q}),\alpha_1, \cdots, \alpha_k),
\end{align*}
for all $x_i \in L^{0}, \alpha_j \in L^{-1}$, where $\sgn(i_1<j_1;\cdots;i_q<j_q)$ is defined in Equation~\eqref{Eq: sgn}, and $\sgn(i_s-i)$ equals either $1$ if $i_s-i>0$ or $-1$ otherwise.
Here the $(r+t)$-terms $\{(g\circ f)_1(x_{l_1}),\cdots,d\theta(x_{m_1}),\cdots, (g\circ f)_1(x_{l_r}),\cdots, d\theta(x_{m_t}\}$ are placed by the monotone increasing order of the indices $l_1,\cdots,m_t$.
The result then follows from a straightforward verification.
\end{proof}

By Lemma~\ref{Lemma: FG}, $V$ can be pulled back by $g$ to an $L_{\Lie}$-module, which will also be denoted by $V$. Meanwhile, it is also clear that the $L$-module coincides with the pullback of the $L_{\Lie}$-module $V$ by $f$.
Note that the cohomology of the hemistrict Lie 2-algebra $L_{\Lie}$ is exactly the Chevalley-Eilenberg cohomology of the Lie algebra $L_{\Lie}$. Hence, by Proposition~\ref{prop: pullback}, Lemma~\ref{Lemma: FG}, and Lemma~\ref{Lemma: Theta}, we have
\[
 H^\bullet(L,V) \cong H^\bullet_{\CE}(L_{\Lie},V).
\]

\subsubsection{Proof of the second statement}
To prove the second statement of Theorem~\ref{main theorem}, we need the homological perturbation lemma (cf.~\cite{BCSX}), which we recall as follows:

Let us start with a homotopy contraction of cochain complexes:
\[
 \begin{tikzcd}
  (A, d_A) \arrow[loop left, distance=2em, start anchor={[yshift=-1ex]west}, end anchor={[yshift=1ex]west}]{}{h} \arrow[r,yshift = 0.7ex, "\phi"] & (B, d_B) \arrow[l,yshift = -0.7ex, "\psi"],
  \end{tikzcd}
\]
where both $\psi$ and $\phi$ are maps of cochain complexes, and $h$ is the  degree $(-1)$ chain homotopy, satisfying the following two equations
\begin{align*}
  \phi \circ \psi &= \id_B, & \psi \circ \phi &= \id_A - [d_A,h],
\end{align*}
together with the side conditions
\begin{align*}
  h \circ \psi &= 0, & \phi \circ h &= 0, & h^2 &= 0.
\end{align*}

\begin{lem}[The Perturbation Lemma]\label{HPT}
 Let $(A,D_A = d_A + \rho)$ be a perturbation of $(A,d_A)$. Then we have a new homotopy contraction
 \[
 \begin{tikzcd}
  (A, D_A) \arrow[loop left, distance=2em, start anchor={[yshift=-1ex]west}, end anchor={[yshift=1ex]west}]{}{H} \arrow[r,yshift = 0.7ex, "\Phi"] & (B, D_B) \arrow[l,yshift = -0.7ex, "\Psi"],
  \end{tikzcd}
\]
 where
 \begin{align*}
   D_B &= d_B + \sum_{k\geq 0} \phi(\rho h)^k \rho \psi, &
   \Phi &= \sum_{k\geq 0}\phi (\rho h)^k, \\
   H &= \sum_{k\geq 0} (h\rho)^kh = \sum_{k\geq 0} h(\rho h)^k, & \Psi &= \sum_{k \geq 0} (h\rho)^k\psi.
 \end{align*}
\end{lem}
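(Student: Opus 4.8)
The plan is to repackage the infinite series appearing in the statement through the two ``resolvents''
\[
  \Sigma = \sum_{k\geq 0}(\rho h)^{k}, \qquad \Sigma' = \sum_{k\geq 0}(h\rho)^{k},
\]
so that $\Phi = \phi\Sigma$, $\Psi = \Sigma'\psi$, $H = h\Sigma = \Sigma' h$ and $D_{B} = d_{B} + \phi\Sigma\rho\psi = d_{B} + \phi\rho\Sigma'\psi$, and then to verify the five assertions of the lemma by geometric-series manipulation using only the relations $\phi\psi = \id_{B}$, $\psi\phi = \id_{A} - [d_{A},h]$, $h\psi = \phi h = h^{2} = 0$, $\phi d_{A} = d_{B}\phi$, $\psi d_{B} = d_{A}\psi$, and $D_{A}^{2} = 0$ (equivalently $d_{A}\rho + \rho d_{A} = -\rho^{2}$, since $d_A^2=0$). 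The bookkeeping tools are the recurrences $\Sigma = \id_{A} + \rho h\Sigma = \id_{A} + \Sigma\rho h$ and $\Sigma' = \id_{A} + h\rho\Sigma' = \id_{A} + \Sigma' h\rho$, the intertwiners $\Sigma\rho = \rho\Sigma'$ and $h\Sigma = \Sigma' h$, and the collapses $\Sigma h = h$, $h\Sigma' = h$ forced by $h^{2}=0$. Before anything else I would settle convergence: the lemma is meant to be used, and in Section~\ref{Sec: app} is used, in a setting where the perturbation $\rho$ strictly lowers a complete exhaustive filtration, so that $\rho h$ and $h\rho$ are topologically nilpotent, all the series converge, and every termwise rearrangement below is legitimate; alternatively one imposes the usual pointwise local-nilpotency hypothesis on $\rho h$.

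With this in hand the side conditions fall out immediately: $H^{2} = (\Sigma' h)(h\Sigma) = \Sigma' h^{2}\Sigma = 0$; $\Phi H = \phi(\Sigma h)\Sigma = \phi h\Sigma = 0$; and $H\Psi = \Sigma'(h\Sigma')\psi = \Sigma' h\psi = 0$. Next I would prove $\Phi\Psi = \id_{B}$ directly from the double sum $\Phi\Psi = \sum_{j,k\geq 0}\phi(\rho h)^{j}(h\rho)^{k}\psi$: every summand with both $j\geq 1$ and $k\geq 1$ contains an interior factor $h\cdot h = h^{2} = 0$; the summands with $j = 0$ reduce to $\phi\psi = \id_{B}$ because $\phi h = 0$ kills $k\geq 1$; and the summands with $k = 0$, $j\geq 1$ vanish because $h\psi = 0$. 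The chain-map identities $\Phi D_{A} = D_{B}\Phi$ and $\Psi D_{B} = D_{A}\Psi$ are then obtained by starting from $\phi d_{A} = d_{B}\phi$, $\psi d_{B} = d_{A}\psi$ and commuting $\rho$ through the resolvents, the cancellations being driven by $d_{A}\rho + \rho d_{A} = -\rho^{2}$ and by $\Sigma - \id_{A} = \Sigma\rho h$. Finally $D_{B}^{2} = 0$ comes for free once $\Phi$ is a chain map with $\Phi\Psi = \id_{B}$ (hence surjective): $D_{B}^{2}\Phi = D_{B}\Phi D_{A} = \Phi D_{A}^{2} = 0$.

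The one substantive identity, and the step I expect to be the main obstacle, is $\Psi\Phi = \id_{A} - [D_{A},H]$. Here the plan is to expand $\Psi\Phi = \Sigma'(\psi\phi)\Sigma = \Sigma'(\id_{A} - d_{A}h - h d_{A})\Sigma$ via the unperturbed contraction relation, write the target as $[D_{A},H] = [d_{A} + \rho,\, H]$ using both forms $H = h\Sigma = \Sigma' h$ as convenient, expand the graded commutator, and repeatedly substitute $d_{A}h + h d_{A} = \id_{A} - \psi\phi$ and $d_{A}\rho + \rho d_{A} = -\rho^{2}$, collapsing the resulting alternating sums through $\Sigma = \id_{A} + \Sigma\rho h$ and $\Sigma' = \id_{A} + h\rho\Sigma'$; the $\psi\phi$-contributions reassemble into $\Psi\Phi$ and everything else cancels. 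The difficulty is not conceptual but organizational: one must keep the signs in the graded commutators straight and decide, at each step, whether to push a factor $h$ to the left through $\Sigma$ (via $h\Sigma = \Sigma' h$) or absorb it on the right (via $\Sigma h = h$, $h\Sigma' = h$) so that the telescoping actually closes. The only genuine hypothesis that must not be overlooked is convergence of the resolvents, which is precisely why the statement is, and must be, read in a complete filtered (or pointwise-nilpotent) setting.
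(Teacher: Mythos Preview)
The paper does not actually prove this lemma: it is stated as the standard homological perturbation lemma with a reference (cf.~\cite{BCSX}) and then used without proof, so there is no ``paper's own proof'' to compare against. Your proposal is a correct and standard proof of the perturbation lemma; the resolvent bookkeeping, the derivation of the side conditions and of $\Phi\Psi=\id_B$ from $h^2=\phi h=h\psi=0$, the chain-map identities via $d_A\rho+\rho d_A=-\rho^2$, and the main identity $\Psi\Phi=\id_A-[D_A,H]$ are exactly how this is usually done, and your remark about convergence (topological nilpotence of $\rho h$ in a complete filtered setting) is precisely the hypothesis that is implicitly assumed and satisfied in the paper's application in Section~\ref{Sec: app}.
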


Now we analyze the two cochain complexes in our situation: We will denote by $K \subset L^0$ the image of $d \colon L^{-1} \to L^0$ in the sequel.

On the one hand, the Chevalley-Eilenberg cochain complex of the Lie algebra $L_{\Lie}$ is
\[
 B := \oplus_{n\geq0}C^n(L_{\Lie},V) = \oplus_{n\geq0}\oplus_{p+r = n}\wedge^p (L^0/K)^\vee \otimes V^r,
\]
with the differential $D_B = d_V + d_{\CE}$, where
\begin{compactenum}
  \item $d_V$ comes from the differential of the 2-term complex $\vec{V}$, which increases the index $r$ by $1$, i.e.,
    \[
     d_V\colon \wedge^p(L^0/K)^\vee \otimes V^r \rightarrow \wedge^p(L^0/K)^\vee \otimes V^{r+1},
    \]
    where $V^r = 0$ if $r \neq -1,0$;
  \item $d_{\CE}$ is the Chevalley-Eilenberg differential of the Lie algebra $L_{\Lie}$ with coefficient $V^\bullet = V^{-1} \oplus V^0$, which increases the index $p$ by $1$, i.e.,
    \[
     d_{\CE}\colon \wedge^p(L^0/K)^\vee \otimes V^r \rightarrow \wedge^{p+1}(L^0/K)^\vee \otimes V^{r}.
    \]
\end{compactenum}

On the other hand, the cochain complex of the semistrict Lie 2-algebra $\tilde{L}$ with coefficient $V$ is, by definition,
\[
A := C^\bullet(\tilde{L},V) = \oplus_{n\geq0} C^n(\tilde{L},V) = \oplus_{n\geq0}\oplus_{p+2q+r = n}\wedge^p(L^0)^\vee \otimes S^q((L^{-1}[1])^\vee) \otimes V^r,
\]
with the differential $D_A = d_V + \delta + \rho$, where
\begin{compactenum}
\item $d_V$ also comes from the differential of the 2-term complex $\vec{V}$, which increases the index $r$ by $1$, i.e.,
    \[
     d_V\colon \wedge^p(L^0)^\vee \otimes S^q((L^{-1}[1])^\vee) \otimes V^r \rightarrow \wedge^p(L^0)^\vee \otimes S^q((L^{-1}[1])^\vee) \otimes V^{r+1};
    \]
\item $\delta$ is induced from the differential of the 2-term complex $\vec{L}$, which decreases the index $p$ by $1$ and increases the index $q$ by $1$ at the same time, i.e.,
\[
  \delta\colon \bigoplus_{p,q \geq 0}\wedge^{p+1}(L^0)^{\vee} \otimes S^q((L^{-1}[1])^\vee) \otimes V^r \rightarrow \bigoplus_{p,q \geq 0}\wedge^{p}(L^0)^{\vee} \otimes S^{q+1}((L^{-1}[1])^\vee) \otimes V^r;
\]
\item $\rho = -\tilde{l}_2^\vee + \tilde{l}_3^\vee + l^\vee$ is the sum of duals of the 2-bracket $\tilde{l}_2$, the 3-bracket $\tilde{l}_3$, defined by Equation~\eqref{Eq: ltilde2},~\eqref{Eq: ltilde3}, and the left action $l$ of $L$ on $V^\bullet$.
\end{compactenum}
It is clear that $(d_V + \delta)^2 = 0$. Thus, the cochain complex $A$ results from a perturbation of the complex
\[
 A^\prime := (\oplus_{n\geq0} C^n(\tilde{L},V), d_V + \delta).
\]

Let us choose a splitting $j: L^0/K \rightarrow L^0$ of the following exact sequence of vector spaces
\[
  0 \rightarrow K \xrightarrow{i} L^0 \xrightarrow{\pr} L^0/K \rightarrow 0.
\]
Thus, $L^0 \cong K \oplus L^0/K$ and $\wedge^p(L^0)^\vee \cong  \oplus_{t+s=p}\wedge^t (L^0/K)^\vee \otimes \wedge^s K^\vee$.

The first observation is the following
\begin{lem}\label{lemma}
  There is a homotopy contraction
\[
 \begin{tikzcd}
  A^\prime := (\oplus_{n\geq0} C^n(\tilde{L},V), d_V + \delta) \arrow[loop left, distance=2em, start anchor={[yshift=-1ex]west}, end anchor={[yshift=1ex]west}]{}{h} \arrow[r,yshift = 0.7ex, "\phi"] &  (\oplus_{n\geq0} \oplus_{p+r=n} \wedge^p (L^0/K)^\vee \otimes V^r, d_V) := B^\prime \arrow[l,yshift = -0.7ex, "\psi"],
  \end{tikzcd}
\]
where
  \begin{align*}
    \psi\colon & \oplus_{p+r=n}\wedge^p (L^0/K)^\vee \otimes V^r \xrightarrow{\pr^\vee \otimes \id_{V^r}} \oplus_{p+r=n}\wedge^p(L^0)^\vee \otimes V^r \hookrightarrow C^n(\tilde{L},V), \\
    \phi\colon & C^n(\tilde{L},V) \twoheadrightarrow \oplus_{p+r=n}\wedge^p (L^0)^\vee \otimes V^r \xrightarrow{j^\vee \otimes \id_{V^r}} \oplus_{p+r=n}\wedge^p (L^0/K)^\vee \otimes V^r,
  \end{align*}
 and the degree $(-1)$ chain homotopy
 \[
  h\colon \wedge^t (L^0/K)^\vee \otimes \wedge^s K^\vee \otimes  S^{q+1}((L^{-1}[1])^\vee) \otimes V^r \rightarrow \wedge^t (L^0/K)^\vee \otimes \wedge^{s+1} K^\vee \otimes S^q((L^{-1}[1])^\vee) \otimes V^r
 \]
 is specified by for all
 \[
 \omega \in \Hom(\wedge^t L^0/K \otimes \wedge^s K \otimes  S^{q+1}(L^{-1}[1]), V^r) \cong \wedge^t (L^0/K)^\vee \otimes \wedge^s K^\vee \otimes  S^{q+1}((L^{-1}[1])^\vee) \otimes V^r,
 \]
 \begin{align*}
  &\quad h(\omega)(\bar{x}_1,\cdots,\bar{x}_t, k_1,\cdots,k_{s+1} \mid \alpha_1,\cdots,\alpha_q) \\
  &= \begin{cases} \frac{1}{s+q}\sum_{j=1}^{s+1}(-1)^{s+1-j}\omega(\bar{x}_1,\cdots,\bar{x}_t, \cdots,\widehat{k_j},\cdots \mid \pr_{-1}(k_j),\alpha_1,\cdots,\alpha_q), & \text{if $s+q > 0$}, \\
  0,& \text{otherwise},
  \end{cases}
 \end{align*}
 for all $\bar{x}_1,\cdots,\bar{x}_t \in L^0/K, k_1,\cdots,k_{s+1} \in K, \alpha_1,\cdots,\alpha_q \in L^{-1}$. Here $\pr_{-1}: K \to L^{-1}$ is the inverse of the isomorphism $d: L^{-1} \to K \subset L^0$.
\end{lem}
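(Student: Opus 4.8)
The plan is to recognise the claimed homotopy contraction, after discarding inert tensor factors, as the standard contraction of a Koszul complex onto its weight-zero line, that Koszul complex being the one attached to the isomorphism $d\colon L^{-1}\xrightarrow{\sim}K$.

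First I would separate the spectators. The differential $d_V$ acts only on the $V^r$-factor; $\phi$ and $\psi$ act only on the $(L^0)^\vee$-factors (through $\pr^\vee$ and $j^\vee$); and $\delta,h$ act only on the $(L^0)^\vee$- and $S^\bullet((L^{-1}[1])^\vee)$-factors. Hence $d_V$ graded-commutes with $\phi,\psi,\delta,h$, so $[d_V+\delta,h]=[\delta,h]$, and it suffices to verify the defining identities of a homotopy contraction with $d_V$, the $V^r$-factor, and each $\wedge^t(L^0/K)^\vee$ held fixed. What remains, for each such $t$ and $r$, is the exterior--symmetric complex $C:=\bigoplus_{s,q}\wedge^s K^\vee\otimes S^q((L^{-1}[1])^\vee)$ with differential $\delta$: by its definition as the contribution of the internal differential of $\vec L$ to the $\tilde L$-cochain differential, $\delta$ inserts $d\alpha\in K$ into an exterior slot while deleting the symmetric cofactor $\alpha$ --- that is, it carries one exterior $K$-leg, transported by $d^\vee$, into a symmetric $L^{-1}[1]$-leg --- so it lowers $s$ by one, raises $q$ by one, and preserves the weight $w:=s+q$. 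In each weight $w\ge1$, $(C,\delta)$ is exactly the (acyclic) Koszul complex of $d$; in weight $0$ it is the line $\wedge^0 K^\vee\otimes S^0\cong\k$.

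In this picture the three maps are transparent: $\psi$ embeds $B'$ as the weight-$0$ part of $A'$ via $\pr^\vee$; $\phi$ projects onto that part, first killing every $q\ge1$ cochain and then applying $j^\vee$, which annihilates every summand carrying a $K$-leg; and $h$ is the standard Koszul contracting homotopy, moving a symmetric $L^{-1}[1]$-leg back to an exterior $K$-leg through $\pr_{-1}=d^{-1}$ and normalised by the Koszul weight. I would then check: $j^\vee\pr^\vee=(\pr\circ j)^\vee=\id$, so $\phi\psi=\id_{B'}$; $\delta\psi=0$ because $\psi$ produces no $K$-leg for $\delta$ to act on, and $\phi\delta=0$ because $\delta$ raises $q$ while $\phi$ kills all $q\ge1$ cochains, so $\phi$ and $\psi$ are cochain maps; $h\psi=0$ because $\psi$ lands in weight $0$, where $h$ vanishes; $\phi h=0$ because $h$ always outputs a $K$-leg, killed by the $j^\vee$-step of $\phi$ on whatever survives the $q$-projection; and $h^2=0$ because $h$ wedges a $K^\vee$-covector onto the exterior part while differentiating out one symmetric covector, and iterating this antisymmetrises a symmetric expression and so vanishes. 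The substantive point is $\psi\phi=\id_{A'}-[\delta,h]$, i.e. $[\delta,h]=\id-P$ with $P$ the projector onto the weight-$0$ part: this reads $0=0$ in weight $0$, and in each weight $w\ge1$ it is precisely the Koszul identity, checked in one line by the usual computation that $\delta h+h\delta$ equals $w^{-1}$ times the weight operator, hence the identity.

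The main obstacle is not conceptual but bookkeeping: one must keep the Koszul sign conventions for the shifted space $L^{-1}[1]$ consistent with those already built into $\delta$ and $\rho$, and pin down the exact combinatorial prefactor in $h$ so that $\delta h+h\delta$ comes out as the identity --- and not a nonzero multiple of it --- away from the weight-$0$ corner, and so that $h$ is genuinely well defined on the exterior and graded-symmetric powers. These are the only places where a sign or normalisation slip could hide; once the explicit formula for $h$ has been checked with care, the rest follows by the straightforward verification the statement invokes.
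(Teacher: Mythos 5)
Your proposal cannot really be measured against the paper's own argument, because the paper gives none: the authors write only that ``the proof of this lemma is straightforward, thus omitted.'' Your reading of the situation is correct and is surely the intended one. After splitting off the spectator factors $\wedge^t(L^0/K)^\vee$ and $V^r$ (on which only $d_V$ acts, and with which $\delta$, $h$, $\phi$, $\psi$ graded-commute, so that $[d_V+\delta,h]=[\delta,h]$), what remains is the Koszul complex of the isomorphism $d\colon L^{-1}\to K$, graded by the weight $w=s+q$: the derivation $\delta$ is the Koszul contraction $\kappa$, $\psi\phi$ is the projector onto the weight-zero line, and $h$ is the weight-normalised adjoint $\kappa^*$; the identities $\phi\psi=\id$, $\delta\psi=0$, $\phi\delta=0$, $h\psi=0$, $\phi h=0$, $h^2=0$ all hold for exactly the reasons you give. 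One remark in support of the caution you express about the prefactor: taken literally, the printed formula normalises $h$ on $\wedge^sK^\vee\otimes S^{q+1}((L^{-1}[1])^\vee)$ by $\tfrac{1}{s+q}$ and sets $h=0$ when $s+q=0$. Since the unnormalised operators satisfy $\kappa\kappa^*+\kappa^*\kappa=(s+q)\,\id$ on $\wedge^sK^\vee\otimes S^q$, this literal reading gives $\delta h+h\delta=\tfrac{w}{w-1}\,\id$ in weight $w\geq 2$ and $\delta h+h\delta=0$ in weight $1$ (e.g. on $S^1((L^{-1}[1])^\vee)$ both $h$ and $\delta$ vanish), so the contraction identity fails as stated. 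The constant must be read as one over the total weight of the source, i.e. $\tfrac{1}{s+q+1}$ --- equivalently $\tfrac{1}{s'+q'}$ in terms of the target indices $s'=s+1$, $q'=q$ --- with no exceptional case needed. Your criterion, that the prefactor is pinned down by demanding $\delta h+h\delta=\id$ away from weight zero, is precisely the check that detects and repairs this; with that normalisation fixed, your argument is complete.
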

The proof of this lemma is straightforward, thus omitted.

Applying the perturbation Lemma~\ref{HPT} to the contraction in Lemma~\ref{lemma}, we prove the following
\begin{lem}\label{main thm}
  There is a homotopy contraction
 \[
 \begin{tikzcd}
  A = (\oplus_{n\geq0} C^n(\tilde{L},V), D_A = d_V + \delta + \rho) \arrow[loop left, distance=2em, start anchor={[yshift=-1ex]west}, end anchor={[yshift=1ex]west}]{}{H} \arrow[r,yshift = 0.7ex, "\Phi"] & B = (\oplus_{n\geq0}\wedge^n (L^0/K)^\vee \otimes V, D_B:= d_V + d_{\CE}) \arrow[l,yshift = -0.7ex, "\Psi"].
  \end{tikzcd}
\]
\end{lem}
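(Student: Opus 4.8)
\emph{Proof proposal.}
The plan is to apply the Perturbation Lemma~\ref{HPT} to the homotopy contraction of Lemma~\ref{lemma}, with $\rho = -\tilde{l}_2^\vee + \tilde{l}_3^\vee + l^\vee$ as the perturbation. Since $(d_V+\delta)^2 = 0$ and $D_A = d_V+\delta+\rho$ is the differential of the genuine cochain complex $C^\bullet(\tilde{L},V)$ (so $D_A^2=0$), the operator $\rho$ is a perturbation of $A' = (\oplus_n C^n(\tilde{L},V),\, d_V+\delta)$, and the side conditions $h\psi = 0$, $\phi h = 0$, $h^2 = 0$ together with $\phi\psi = \id$, $\psi\phi = \id - [d_V+\delta,h]$ are exactly what Lemma~\ref{lemma} provides. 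Hence Lemma~\ref{HPT} produces a contraction of $A = (C^\bullet(\tilde{L},V), D_A)$ onto $\big(\oplus_{p+r=n}\wedge^p(L^0/K)^\vee\otimes V^r,\; D_B := d_V + \textstyle\sum_{k\geq 0}\phi(\rho h)^k\rho\psi\big)$, with transferred data $\Phi = \sum_{k\geq 0}\phi(\rho h)^k$, $\Psi = \sum_{k\geq 0}(h\rho)^k\psi$, $H = \sum_{k\geq 0}(h\rho)^k h$. It then remains to verify that these series are locally finite and that $D_B = d_V + d_{\CE}$.

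For convergence I would record bidegrees with respect to the decomposition of a cochain in $C^n(\tilde{L},V)$ according to its number $q$ of symmetric $L^{-1}[1]$-entries (and, after fixing the splitting $j$, the further splitting of the $L^0$-entries into $(L^0/K)$- and $K$-entries). A direct check shows that the homotopy $h$ strictly lowers $q$ by one; the summands $-\tilde{l}_2^\vee$ and $l^\vee$ preserve $q$, while $\tilde{l}_3^\vee$ lowers $q$ by one. Consequently both $\rho h$ and $h\rho$ strictly lower $q$. Since $q$ is nonnegative and bounded above on each $C^n$ (from $n = p+2q+r$ with $p\geq 0$, $r\geq -1$), the operators $\rho h$ and $h\rho$ are nilpotent on each $C^n$, so all four series terminate in each cohomological degree and the Perturbation Lemma applies verbatim.

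For the identification of $D_B$ the key point is that $\psi$ takes values in the subspace of $q$-degree zero, and that both $\tilde{l}_3^\vee$ and $h$ annihilate cochains of $q$-degree zero (they lower $q$, which cannot drop below $0$). Hence $\rho\psi = (-\tilde{l}_2^\vee + l^\vee)\psi$ is still of $q$-degree zero, so $h\rho\psi = 0$ and therefore $(\rho h)^k\rho\psi = 0$ for all $k\geq 1$: only the $k=0$ term survives and $D_B = d_V + \phi\rho\psi$. I would then compute $\phi\rho\psi$ directly on $\eta\in\wedge^p(L^0/K)^\vee\otimes V^r$: the $-\tilde{l}_2^\vee$-part contributes the term $\pm\,\eta(\dots,\widehat{\bar{x}_i},\dots,\widehat{\bar{x}_j},\dots,\{\bar{x}_i,\bar{x}_j\})$, using $\pr(\tilde{l}_2(x_i,x_j)) = \{\bar{x}_i,\bar{x}_j\}$; the $l^\vee$-part contributes $\pm\, l_{j(\bar{x}_i)}\,\eta(\dots,\widehat{\bar{x}_i},\dots)$, where $l_{j(\bar{x})}$ depends only on $\bar{x}$ because the standing hypothesis $l_\alpha = 0$ for $\alpha\in L^{-1}$ forces $l_{d\alpha} = d_V\circ l_\alpha + l_\alpha\circ d_V = 0$, so $l$ descends through $L^0 \to L^0/K = L_{\Lie}$; and the $\tilde{l}_3^\vee$-part is zero. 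Matching signs, this is precisely the Chevalley--Eilenberg differential $d_{\CE}$ of $L_{\Lie}$ with coefficients in $V$, whence $D_B = d_V + d_{\CE}$, which is the asserted target complex.

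I expect the main obstacle to be bookkeeping rather than conceptual: pinning down the bidegree conventions for $\delta,\rho,h$ precisely enough to justify the strict lowering of $q$, and reconciling the signs appearing in $\phi\rho\psi$ with the standard Chevalley--Eilenberg differential. The conceptual content --- that the transferred differential collapses to its $k=0$ term and reproduces $d_{\CE}$ --- is essentially forced once one observes that $\psi$ lands in $q$-degree zero and that $h$ and $\tilde{l}_3^\vee$ both vanish there, so no genuine difficulty lies in that part of the argument.
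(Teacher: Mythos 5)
Your proposal is correct and follows essentially the same route as the paper: apply the Perturbation Lemma~\ref{HPT} to the contraction of Lemma~\ref{lemma}, observe that $\psi$ lands in the $q$-degree-zero part where $h$ and $\tilde{l}_3^\vee$ vanish so that the transferred differential collapses to $d_V+\phi\rho\psi$, and identify $\phi(-\tilde{l}_2^\vee+l^\vee)\psi$ with $d_{\CE}$ using $\pr(\tilde{l}_2(x,y))=\{\bar{x},\bar{y}\}$ and $l_{d\alpha}=0$. Your explicit verification that $\rho h$ and $h\rho$ are nilpotent in each cohomological degree (so the perturbation series terminate) is left implicit in the paper, but it is a refinement of the same argument rather than a different approach.
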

\begin{proof}
It suffices to show that
\[
 \sum_{k \geq 0}\phi (\rho h)^k \rho\psi = d_{\CE}: \wedge^p (L^0/K)^\vee \otimes V^r \rightarrow \wedge^{p+1} (L^0/K)^\vee \otimes V^r,
\]
where $\phi,\psi$ and $h$ are defined in Lemma~\ref{lemma} and $\rho = \tilde{l}_2^\vee + \tilde{l}_3^\vee + l^\vee$.

In fact, since
\[
 \tilde{l}_2^\vee \circ \psi\colon \wedge^p (L^0/K)^\vee \otimes V^r \rightarrow \wedge^{p+1} (L^0)^\vee \otimes V^r,
\]
by the definition of $\tilde{l}_2$, and
\[
 l^\vee\colon \wedge^p (L^0/K)^\vee \otimes V^r \rightarrow \wedge^{p+1} (L^0)^\vee \otimes V^r,
\]
by the assumption that $\iota_\alpha l^\vee = l_\alpha = 0$ for all $\alpha \in L^{-1}$, it follows that
\[
 h \rho \psi = h(-\tilde{l}_2^\vee + \tilde{l}_3^\vee + l^\vee) \psi = h(\tilde{l}_2^\vee + l^\vee) \psi = 0: \wedge^p (L^0/K)^\vee \otimes V^r \rightarrow C^{p+r}(\tilde{L},V).
\]
Thus,
\[
 \sum_{k \geq 0}\phi (\rho h)^k \rho\psi = \phi\rho\psi = \phi(-\tilde{l}_2^\vee + \tilde{l}_3^\vee + l^\vee)\psi = \phi (-\tilde{l}_2^\vee + l^\vee) \psi = d_{\CE}: \wedge^p (L^0/K)^\vee \otimes V^r \rightarrow \wedge^{p+1} (L^0/K)^\vee \otimes V^r.
\]
To see the reason why the last equality holds, it suffices to prove the case $p=1$: We compute for each $\xi \in \Hom(L^0/K,V^r)$, $x, y \in L^0$ such that $\bar{x} = \pr(x), \bar{y} = \pr(y) \in L^0/K$,
\begin{align*}
 \phi(\tilde{l}_2^\vee(\psi(\xi)))(\bar{x},\bar{y}) &= \xi(\pr(\tilde{l}_2(j(\bar{x}),j(\bar{y}))))
  = \frac{1}{2}\xi(\pr([j(\bar{x}),j(\bar{y})] - [j(\bar{y}),j(\bar{x})])) \\
 &= \frac{1}{2}\xi(\pr([x,y] - [y,x])) = \frac{1}{2}\xi(\pr([x,y] - [y,x]) + \pr(dh_2(x,y))) \\
 &= \frac{1}{2}\xi(\pr([x,y] - [y,x]) + \pr([x,y] + [y,x])) = \xi(\{\bar{x},\bar{y}\}),
\end{align*}
and
\begin{align*}
  \phi(l^\vee(\psi(\xi)))(\bar{x},\bar{y}) &= l_{j(\bar{x})}\xi(\bar{y}) - l_{j(\bar{y})}\xi(\bar{x}),
\end{align*}
which implies that $\phi(l^\vee(\psi(\xi))) - \phi(\tilde{l}_2^\vee(\psi(\xi))) = d_{\CE}(\xi)$ as desired.
\end{proof}
As an immediate consequence, we have $H^\bullet_{\CE}(\tilde{L},V) \cong H^\bullet_{\CE}(L_{\Lie},V)$, which completes the proof of Theorem~\ref{main theorem}.

\subsection{Application: Leibniz algebras}
Let $(\g,[-,-]_\g)$ be a Leibniz algebra with Leibniz kernel $K$. By Example~\ref{Ex:Leibniz algebra}, we have an injective hemistrict Lie 2-algebra
\[
 L_\g := (K[1] \hookrightarrow \g,[-,-]_\g,h_2).
\]
The associated Lie algebra is commonly denoted by $\g_{\Lie}$. It is clear that $L_\g$ satisfies the assumptions in Theorem~\ref{main theorem}. Note also that the alternator $h_2$ is surjective in this case. In fact, any injective hemistrict Lie $2$-algebra with surjective alternator $h_2$ is of this form (cf.~\cite{Roy1}).

Meanwhile, according to Roytenberg~\cite{Roy1}, Sheng and Liu~\cite{SL}, the skew-symmetrization of the Leibniz bracket $[-,-]_\g$ gives rise to a semistrict Lie 2-algebra
\[
 \mathcal{G} := (K[1] \hookrightarrow \g, \widetilde{l}_2,\widetilde{l}_3),
\]
where $\widetilde{l}_2$ is the skew-symmetrization of $[-,-]_\g$, i.e,
\begin{align*}
  \widetilde{l}_2(x,y) &= \frac{1}{2}([x,y]_\g - [y,x]_\g), & \widetilde{l}_2(x,\alpha) &= -\widetilde{l}_2(\alpha,x) = \frac{1}{2}[x,\alpha]_\g,
\end{align*}
for all $x,y \in \g, \alpha \in K[1]$, and $\widetilde{l}_3: \wedge^3\g \to K[1]$ is defined by
\begin{align*}
 \widetilde{l}_3(x,y,z) &=-\frac{1}{12}\left(h_2([x,y]_\g-[y,x]_\g,z) + h_2([y,z]_\g-[z,y]_\g,x) + h_2([z,x]_\g-[x,z]_\g,y)\right) \\
  &= \frac{1}{4}\left([[z,y]_\g,x]_\g + [[x,z]_\g,y]_\g + [[y,x]_\g,z]_\g\right).
\end{align*}
Applying Theorem~\ref{main theorem}, we have the following
\begin{thm}\label{Thm: Leibniz algebra}
  Let $(\g,[-,-]_\g)$ be a Leibniz algebra with Leibniz kernel $K$ and $V$ a representation of $L_\g$ such that $l_\alpha = 0$ for all $\alpha \in K$. Then
  \[
   H^\bullet(L_\g,V) \cong H^\bullet(\mathcal{G},V) \cong H^\bullet_{\CE}(\g_{\Lie},V).
  \]
\end{thm}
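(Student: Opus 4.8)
The plan is to obtain Theorem~\ref{Thm: Leibniz algebra} as a direct specialization of Theorem~\ref{main theorem} to the hemistrict Lie 2-algebra $L = L_\g$, so essentially all the work lies in checking that the hypotheses apply and that the skew-symmetrization $\widetilde{L_\g}$ coincides with $\mathcal{G}$. First I would record the routine identifications: the differential of $L_\g$ is the inclusion $K \hookrightarrow \g$, which is injective, so $L_\g$ is an injective hemistrict Lie 2-algebra in the sense of Section~\ref{Sec: app}; its induced Lie algebra $(L_\g)_{\Lie}$ is $\g/K$ with the bracket descended from $[-,-]_\g$, i.e.\ exactly $\g_{\Lie}$; and the semistrict Lie 2-algebra $\widetilde{L_\g}$ obtained by skew-symmetrization has the same underlying 2-term complex $K[1] \hookrightarrow \g$ as $L_\g$ and $\mathcal{G}$.

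Next I would verify the two hypotheses of Theorem~\ref{main theorem}. The condition $l_\alpha = 0$ for all $\alpha \in L^{-1} = K[1]$ is precisely the standing assumption on $V$. For the condition $h_2(d\alpha,d\beta) = 0$ with $\alpha,\beta \in K[1]$, I would invoke the standard identity valid in every left Leibniz algebra, obtained by writing the Leibniz rule as $[[x,y]_\g,z]_\g = [x,[y,z]_\g]_\g - [y,[x,z]_\g]_\g$ and adding the version with $x,y$ interchanged:
\[
 [[x,y]_\g,z]_\g + [[y,x]_\g,z]_\g = 0, \qquad \forall x,y,z \in \g.
\]
This says $[K,\g]_\g = 0$, so for $\alpha,\beta \in K$ we get $[\alpha,\beta]_\g = 0 = [\beta,\alpha]_\g$, hence $h(\alpha,\beta) = [\alpha,\beta]_\g + [\beta,\alpha]_\g = 0$ and therefore $h_2(d\alpha,d\beta) = [1]\,h(\alpha,\beta) = 0$.

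I would then check that $\widetilde{L_\g}$, as defined by Equations~\eqref{Eq: ltilde2} and~\eqref{Eq: ltilde3}, is the semistrict Lie 2-algebra $\mathcal{G} = (K[1]\hookrightarrow\g,\widetilde{l}_2,\widetilde{l}_3)$: the binary bracket $\widetilde{l}_2$ of $L_\g$ is by definition $\tfrac12\bigl([x,y]_\g - (-1)^{\abs{x}\abs{y}}[y,x]_\g\bigr)$, which matches the one of $\mathcal{G}$, and substituting $h_2 = [1]\circ h$ into $\widetilde{l}_3(x,y,z) = -\tfrac16\bigl(h_2(\widetilde{l}_2(x,y),z) + h_2(\widetilde{l}_2(y,z),x) + h_2(\widetilde{l}_2(z,x),y)\bigr)$ and simplifying with the Leibniz rule and the identity $[[x,y]_\g,z]_\g = -[[y,x]_\g,z]_\g$ reproduces the closed form $\tfrac14\bigl([[z,y]_\g,x]_\g + [[x,z]_\g,y]_\g + [[y,x]_\g,z]_\g\bigr)$; this bracket manipulation is the only genuinely computational point. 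I would also observe, using the remark preceding Theorem~\ref{main theorem}, that the assumption $l_\alpha = 0$ makes $V$ a module over $\g_{\Lie}$ and over $\mathcal{G}$ compatibly with its $L_\g$-module structure. With all hypotheses in place, Theorem~\ref{main theorem}(1) gives $H^\bullet(L_\g,V) \cong H^\bullet_{\CE}(\g_{\Lie},V)$ and Theorem~\ref{main theorem}(2), applied with $\widetilde{L_\g} = \mathcal{G}$, gives $H^\bullet(\mathcal{G},V) \cong H^\bullet_{\CE}(\g_{\Lie},V)$; composing the two isomorphisms yields the asserted chain. The main — and essentially only — obstacle is the bookkeeping of confirming $h_2(d\alpha,d\beta)=0$ and matching the intrinsic $\widetilde{l}_3$ with the displayed iterated-bracket formula, both of which reduce to the single identity $[K,\g]_\g = 0$; everything substantive is already supplied by Theorem~\ref{main theorem}.
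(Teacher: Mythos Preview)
Your proposal is correct and follows exactly the paper's approach: the paper simply states that $L_\g$ satisfies the hypotheses of Theorem~\ref{main theorem} and then applies it, and your proposal supplies precisely the verifications the paper leaves implicit (injectivity of $d$, the identity $[K,\g]_\g=0$ yielding $h_2(d\alpha,d\beta)=0$, and the identification $\widetilde{L_\g}=\mathcal{G}$). There is nothing to add.
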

Recall that the Leibniz kernel $K$ is a subset of the left center of $\g$. Thus the adjoint representation $\Ad_{L_\g}$ of $L_\g$, which arises from the adjoint representation of the Leibniz algebra $\g$ as in Example~\ref{Ex: AdLg}, satisfies the assumption in the above theorem. As a consequence,
\begin{Cor}
  Let $(\g,[-,-]_\g)$ be a Leibniz algebra. Then
  \[
   H^\bullet(L_\g, \Ad_{L_\g}) \cong H^\bullet(\mathcal{G},\Ad_{L_\g}) \cong H^\bullet_{\CE}(\g_{\Lie},\Ad_{L_\g}).
  \]
\end{Cor}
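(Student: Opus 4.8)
The plan is simply to apply Theorem~\ref{Thm: Leibniz algebra} to the adjoint representation $V = \Ad_{L_\g}$; the only thing that needs to be checked is the hypothesis of that theorem, namely that $l_\alpha = 0$ for all $\alpha \in K$.

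First I would recall, following Example~\ref{Ex: AdLg}, that $\Ad_{L_\g}$ is the representation of $L_\g$ on the $2$-term complex $(K[1] \hookrightarrow \g)$ induced by the adjoint representation $(l,r)$ of the Leibniz algebra $\g$ on itself, for which $l_x y = [x,y]_\g$ and $r_x y = [y,x]_\g$. In particular, regarding $\alpha \in K$ as an element of $L^{-1} = K[1]$ via the isomorphism $d\colon K[1] \xrightarrow{\ \sim\ } K$, the left action of $\alpha$ on $V^0 = \g$ is $l_\alpha(y) = [\alpha,y]_\g$, while the left action of $\alpha$ on $V^{-1}$ vanishes automatically for degree reasons since $V^{-2} = 0$.

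Next I would invoke the classical fact that the Leibniz kernel is contained in the left center $Z(\g) = \{x \in \g \mid [x,y]_\g = 0 \ \forall y\}$: for any $a,b,c \in \g$ the left Leibniz identity gives $[[a,b]_\g,c]_\g = [a,[b,c]_\g]_\g - [b,[a,c]_\g]_\g$, and interchanging $a$ and $b$ gives $[[b,a]_\g,c]_\g = [b,[a,c]_\g]_\g - [a,[b,c]_\g]_\g$; adding the two shows $[\,[a,b]_\g + [b,a]_\g,\,c\,]_\g = 0$, and since $K$ is spanned by the elements $[a,b]_\g + [b,a]_\g$ we get $[K,\g]_\g = 0$, i.e. $K \subseteq Z(\g)$. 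Consequently $l_\alpha = [\alpha,-]_\g = 0$ for every $\alpha \in K$, so $\Ad_{L_\g}$ satisfies the hypothesis of Theorem~\ref{Thm: Leibniz algebra}, and the asserted chain $H^\bullet(L_\g,\Ad_{L_\g}) \cong H^\bullet(\mathcal{G},\Ad_{L_\g}) \cong H^\bullet_{\CE}(\g_{\Lie},\Ad_{L_\g})$ follows at once. There is no genuine obstacle here — the whole content is the one displayed identity, which is already recalled in the text preceding the statement; the only point that warrants a little care is keeping track of the grading, so that ``$l_\alpha$'' in the hypothesis is correctly read as the action of $\alpha \in L^{-1}$ and is seen to reduce to left multiplication by $\alpha$ in the underlying Leibniz algebra $\g$.
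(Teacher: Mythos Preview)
Your proposal is correct and follows exactly the paper's approach: verify that the adjoint representation satisfies $l_\alpha = 0$ for all $\alpha \in K$ by recalling that the Leibniz kernel lies in the left center, and then apply Theorem~\ref{Thm: Leibniz algebra}. The paper merely asserts $K \subset Z(\g)$ without proof, whereas you supply the one-line Leibniz-identity computation; otherwise the arguments are identical.
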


\begin{bibdiv}
  \begin{biblist}
   \bib{AP}{article}{
   author={Ammar, Mourad},
   author={Poncin, Norbert},
   title={Coalgebraic approach to the Loday infinity category, stem
   differential for $2n$-ary graded and homotopy algebras},
   language={English, with English and French summaries},
   journal={Ann. Inst. Fourier (Grenoble)},
   volume={60},
   date={2010},
   number={1},
   pages={355--387},
   issn={0373-0956},
}

   \bib{BC}{article}{
   author={Baez, John C.},
   author={Crans, Alissa S.},
   title={Higher-dimensional algebra. VI. Lie 2-algebras},
   journal={Theory Appl. Categ.},
   volume={12},
   date={2004},
   pages={492--538},
   issn={1201-561X},
}

   \bib{BSZ}{article}{
   author={Bai, Chengming},
   author={Sheng, Yunhe},
   author={Zhu, Chenchang},
   title={Lie 2-bialgebras},
   journal={Comm. Math. Phys.},
   volume={320},
   date={2013},
   number={1},
   pages={149--172},
   issn={0010-3616},
}

  \bib{BCSX}{article}{
  author={Bandiera, Ruggero},
  author={Chen, Zhuo},
  author={Sti{\'e}non, Mathieu},
  author={Xu, Ping},
  title={Shifted Derived Poisson Manifolds Associated with Lie Pairs},
  journal={Comm. Math. Phys.},
  date={2019},
  issn={1432-0916},
  doi={10.1007/s00220-019-03457-w},
}

\bib{Cai}{book}{
   author={Cai, Xiongwei},
   title={Cohomologies and derived brackets of Leibniz algebras},
   note={Thesis (Ph.D.)--Universit\'{e} du Luxembourg},
   date={2016},
   pages={134},
}

\bib{CaiL}{article}{
   author={Cai, Xiongwei},
   author={Liu, Zhangju},
   title={Derived brackets for fat Leibniz algebras},
   journal={J. Geom. Phys.},
   volume={147},
   date={2020},
   pages={103524},
   issn={0393-0440},
}

\bib{CFV}{article}{
   author={Casas, J. M.},
   author={Faro, E.},
   author={Vieites, A. M.},
   title={Abelian extensions of Leibniz algebras},
   journal={Comm. Algebra},
   volume={27},
   date={1999},
   number={6},
   pages={2833--2846},
   issn={0092-7872},
}

\bib{CL}{article}{
   author={Chen, Zhuo},
   author={Liu, Zhangju},
   title={Omni-Lie algebroids},
   journal={J. Geom. Phys.},
   volume={60},
   date={2010},
   number={5},
   pages={799--808},
   issn={0393-0440},
}

\bib{CLX}{article}{
   author={Chen, Zhuo},
   author={Liu, Zhangju},
   author={Xiang, Maosong},
   title={Kapranov's construction of sh Leibniz algebras},
   journal={Homology Homotopy Appl.},
   volume={22},
   date={2020},
   number={1},
   pages={141--165},
   issn={1532-0073},
}

\bib{CSX}{article}{
   author={Chen, Zhuo},
   author={Sti\'{e}non, Mathieu},
   author={Xu, Ping},
   title={Weak Lie 2-bialgebras},
   journal={J. Geom. Phys.},
   volume={68},
   date={2013},
   pages={59--68},
   issn={0393-0440},
}

\bib{DMS}{article}{
   author={Demir, Ismail},
   author={Misra, Kailash C.},
   author={Stitzinger, Ernie},
   title={On some structures of Leibniz algebras},
   conference={
      title={Recent advances in representation theory, quantum groups,
      algebraic geometry, and related topics},
   },
   book={
      series={Contemp. Math.},
      volume={623},
      publisher={Amer. Math. Soc., Providence, RI},
   },
   date={2014},
   pages={41--54},
}

\bib{KS}{article}{
   author={Kontsevich, M.},
   author={Soibelman, Y.},
   title={Notes on $A_\infty$-algebras, $A_\infty$-categories and
   non-commutative geometry},
   conference={
      title={Homological mirror symmetry},
   },
   book={
      series={Lecture Notes in Phys.},
      volume={757},
      publisher={Springer, Berlin},
   },
   date={2009},
   pages={153--219},
}

\bib{LSW}{article}{
   author={Liu, Jiefeng},
   author={Sheng, Yunhe},
   author={Wang, Qi},
   title={On non-abelian extensions of Leibniz algebras},
   journal={Comm. Algebra},
   volume={46},
   date={2018},
   number={2},
   pages={574--587},
   issn={0092-7872},
}

\bib{LSZ}{article}{
   author={Liu, Zhangju},
   author={Sheng, Yunhe},
   author={Zhang, Tao},
   title={Deformations of Lie 2-algebras},
   journal={J. Geom. Phys.},
   volume={86},
   date={2014},
   pages={66--80},
   issn={0393-0440},
}

\bib{LWX}{article}{
   author={Liu, Zhang-Ju},
   author={Weinstein, Alan},
   author={Xu, Ping},
   title={Manin triples for Lie bialgebroids},
   journal={J. Differential Geom.},
   volume={45},
   date={1997},
   number={3},
   pages={547--574},
   issn={0022-040X},
}

\bib{Loday}{article}{
   author={Loday, Jean-Louis},
   title={Une version non commutative des alg\`ebres de Lie: les alg\`ebres de
   Leibniz},
   language={French},
   journal={Enseign. Math. (2)},
   volume={39},
   date={1993},
   number={3-4},
   pages={269--293},
   issn={0013-8584},
}

\bib{LP}{article}{
   author={Loday, Jean-Louis},
   author={Pirashvili, Teimuraz},
   title={Universal enveloping algebras of Leibniz algebras and
   (co)homology},
   journal={Math. Ann.},
   volume={296},
   date={1993},
   number={1},
   pages={139--158},
   issn={0025-5831},
}

\bib{Roy1}{article}{
   author={Roytenberg, Dmitry},
   title={On weak Lie 2-algebras},
   conference={
      title={XXVI Workshop on Geometrical Methods in Physics},
   },
   book={
      series={AIP Conf. Proc.},
      volume={956},
      publisher={Amer. Inst. Phys., Melville, NY},
   },
   date={2007},
   pages={180--198},
}
		
\bib{Roy2}{article}{
   author={Roytenberg, Dmitry},
   title={Courant-Dorfman algebras and their cohomology},
   journal={Lett. Math. Phys.},
   volume={90},
   date={2009},
   number={1-3},
   pages={311--351},
   issn={0377-9017},
}

\bib{SL0}{article}{
   author={Sheng, Yunhe},
   author={Liu, Zhangju},
   title={Leibniz 2-algebras and twisted Courant algebroids},
   journal={Comm. Algebra},
   volume={41},
   date={2013},
   number={5},
   pages={1929--1953},
   issn={0092-7872},
}

\bib{SL}{article}{
   author={Sheng, Yunhe},
   author={Liu, Zhangju},
   title={From Leibniz algebras to Lie 2-algebras},
   journal={Algebr. Represent. Theory},
   volume={19},
   date={2016},
   number={1},
   pages={1--5},
   issn={1386-923X},
}

\bib{SZ}{article}{
   author={Sheng, Yunhe},
   author={Zhu, Chenchang},
   title={Integration of Lie 2-algebras and their morphisms},
   journal={Lett. Math. Phys.},
   volume={102},
   date={2012},
   number={2},
   pages={223--244},
   issn={0377-9017},
}

\bib{Weinstein}{article}{
   author={Weinstein, Alan},
   title={Omni-Lie algebras},
   note={Microlocal analysis of the Schr\"odinger equation and related topics
   (Japanese) (Kyoto, 1999)},
   journal={S\=urikaisekikenky\=usho K\=oky\=uroku},
   number={1176},
   date={2000},
   pages={95--102},
}

  \end{biblist}
\end{bibdiv}
\end{document}